\documentclass[11pt]{article}


\setlength\topmargin{0pt}
\addtolength\topmargin{-\headheight}
\addtolength\topmargin{-\headsep}
\setlength\oddsidemargin{0pt}
\setlength\textwidth{\paperwidth}
\addtolength\textwidth{-2in}
\setlength\textheight{\paperheight}
\addtolength\textheight{-2in}
\usepackage{layout}
\usepackage[makeroom]{cancel}
\usepackage{bbm}
\usepackage[affil-it]{authblk}

\usepackage{amsfonts}
\usepackage{amsmath}
\usepackage{amssymb}
\usepackage{amsthm}
\usepackage{mathtools}
\usepackage{graphicx} \usepackage{enumerate} \usepackage{multicol}
\usepackage{mathrsfs} \usepackage[all,cmtip]{xy}
\usepackage{enumerate}
\usepackage{cite}
\usepackage[dvipsnames]{xcolor}

\raggedbottom

\allowdisplaybreaks[4]

\newlength{\bibitemsep}\setlength{\bibitemsep}{.10\baselineskip plus .05\baselineskip minus .05\baselineskip}
\newlength{\bibparskip}\setlength{\bibparskip}{0pt}
\let\oldthebibliography\thebibliography
\renewcommand\thebibliography[1]{%
  \oldthebibliography{#1}%
  \setlength{\parskip}{\bibitemsep}%
  \setlength{\itemsep}{\bibparskip}%
}

\usepackage[colorlinks,linkcolor=blue,citecolor=blue,pagebackref,hypertexnames=false, breaklinks]{hyperref}

\newtheorem{theorem}{Theorem}[section]

\newtheorem{assumption}[theorem]{Assumption}
\newtheorem{definition}[theorem]{Definition}
\newtheorem{proposition}[theorem]{Proposition}

\newtheorem{corollary}[theorem]{Corollary}
\newtheorem{lemma}[theorem]{Lemma}
\newtheorem{remark}[theorem]{Remark}
\newtheorem{example}[theorem]{Example}
\newtheorem{examples}[theorem]{Examples}
\newtheorem{foo}[theorem]{Remarks}

\newenvironment{Example}{\begin{example}\rm}{\end{example}}









\def\vint{\mathop{\mathchoice%
          {\setbox0\hbox{$\displaystyle\intop$}\kern 0.22\wd0%
           \vcenter{\hrule width 0.6\wd0}\kern -0.82\wd0}%
          {\setbox0\hbox{$\textstyle\intop$}\kern 0.2\wd0%
           \vcenter{\hrule width 0.6\wd0}\kern -0.8\wd0}%
          {\setbox0\hbox{$\scriptstyle\intop$}\kern 0.2\wd0%
           \vcenter{\hrule width 0.6\wd0}\kern -0.8\wd0}%
          {\setbox0\hbox{$\scriptscriptstyle\intop$}\kern 0.2\wd0%
           \vcenter{\hrule width 0.6\wd0}\kern -0.8\wd0}}%
          \mathopen{}\int}

\newcommand{\R}{\mathbb R}

\newcommand{\pip}{\varphi}

\newcommand{\B}{\mathbf B}

\newcommand{\ve}{\varepsilon}

\newcommand{\eps}{\varepsilon}

\mathtoolsset{showonlyrefs=true}

\title{Besov class via heat semigroup on Dirichlet spaces II: BV functions and Gaussian heat kernel estimates}

\author{Patricia Alonso-Ruiz, Fabrice Baudoin,
Li Chen, Luke Rogers, Nageswari Shanmugalingam,
Alexander Teplyaev}

\date{}

\begin{document}

\maketitle

\begin{abstract}
We introduce the class of bounded variation (BV) functions in a general framework of strictly local Dirichlet spaces with doubling measure. Under the 2-Poincar\'e inequality and a weak Bakry-\'Emery curvature type condition, this BV class is identified with the heat semigroup based Besov class $\mathbf{B}^{1,1/2}(X)$ that was introduced in our previous paper. Assuming furthermore a quasi Bakry-\'Emery curvature type condition, we identify the Sobolev class $W^{1,p}(X)$ with $\mathbf{B}^{p,1/2}(X)$ for $p>1$. Consequences of those identifications in terms of isoperimetric and Sobolev inequalities with sharp exponents are given.

 \end{abstract}

\tableofcontents

\section{Introduction}

In a metric measure space $X$ that is highly path-connected, the theory of Sobolev classes based on upper gradients
provides an approach to calculus using a derivative structure that is strongly local. A weak upper gradient is 
an analog of $|\nabla f|$ when $f$ is a measurable function on the metric space; $|\nabla f|$ satisfies a variant of the 
fundamental theorem of calculus along most rectifiable curves in $X$, and it has the property that if $f$ is constant on a Borel set $E\subset X$,
then $|\nabla f|=0$ almost everywhere in $E$, see~\cite{HKST}. 
A corresponding approach to the theory of functions of bounded variation (BV) is also possible. Initially this was done in~\cite{Mr} under the assumption that the measure on $X$ is doubling and $X$ is sufficiently well-connected by paths to support a $1$-Poincar\'e inequality controlling $f$ by $|\nabla f|$, but it was subsequently recognized that weaker assumptions still ensure the existence of a rich theory~\cite{AmbrosioDiMarino}. In such a setting, a fruitful exploration of the geometry of $X$ using BV functions and sets of finite perimeter (sets whose characteristic functions are BV functions) is possible. 

However, there are metric measure spaces for which the preceding theory is degenerate~\cite[Corollary~7.5]{AmbrosioDiMarino}; a typical situation occurs on a fractal like the Sierpinski gasket, where a paucity of rectifiable curves (e.g.\ in the sense of $1$-modulus) causes the BV space coming from this approach to coincide with $L^1$.  
On the other hand, the theory of Dirichlet forms is well-developed in many such spaces, see for example \cite{Str03, KigB, ST, Str,ORS10, RT10, Ki93, Ki09, Kajino, Chee99, KRS, Koskela-Shanmugalingam-Tyson04}, which is far from an exhaustive list of the literature. The theory of regular Dirichlet forms assumes as the fundamental object a topological space equipped with a $\sigma$-finite Borel measure, and a closed non-negative definite quadratic form $\mathcal{E}$ with dense domain in the associated $L^2$-class on the space.

This paper is one of a sequence, including~\cite{ABCRST1,ABCRST3},  in which we define certain Besov spaces from a Dirichlet form and a measure, and use them as tools to explore and elucidate notions of BV.  The paper~\cite{ABCRST1} introduces some general theory about these spaces, while~\cite{ABCRST3} deals with some situations where the existing theory may degenerate as described above.  In this paper we examine the application of this approach in a metric upper gradient setting connected to that in~\cite{Mr,AmbrosioDiMarino,AES16}.  Specifically, we assume that the Dirichlet form provides an intrinsic metric with respect to which $\mu$ is a doubling measure, and where there is a $2$-Poincar\'e inequality involving the Dirichlet energy measures (Definition~\ref{def:Poincareineq}).  Under these conditions upper gradients exist and accordingly BV may be defined by the relaxation approach in~\cite{Mr}.   However, rather than assuming a $1$-Poincar\'e inequality as in~\cite{Mr}  we introduce a geometrically-motivated assumption that we call a weak Bakry-\'Emery condition (Definition~\ref{def:wBE}) and use it in establishing fundamental properties of BV. Some examples where one has $2$-Poincar\'e and weak Bakry-\'Emery but where the validity of a $1$-Poincar\'e inequality is unknown are in~\cite{BBG}.  Our approach can also be compared to the measure-valued upper gradient structure introduced in~\cite{AmbrosioDiMarino} and adapted to  Dirichlet spaces in~\cite{AES16}; we are indebted to the referee for pointing out that whether our approach lies in the scope of this theory depends on whether  $2$-Poincar\'e and our weak Bakry-\'Emery condition implies the $\tau$-regularity condition of~\cite[Definition~12.4]{AES16}.  The connection between the classical Bakry-\'Emery condition and Cheeger energy are discussed in some detail in~\cite[Section~10]{AES16}; we do not know whether $\tau$-regularity is true in our setting.  Moreover, we believe that the techniques used here may be of independent interest as tools in analyzing the BV space.

In a somewhat related direction to the present work,  Sobolev type spaces constructed using Dirichlet forms have been shown in~\cite{Koskela-Shanmugalingam-Tyson04} to coincide with those constructed using upper gradients if the metric space supports a $2$-Poincar\'e inequality. Moreover, from~\cite{Chee99} it follows that in a doubling metric measure space supporting a $p$-Poincar\'e inequality for some $1\le p\le 2$, there is a Dirichlet form that is compatible with the upper gradient Sobolev class structure, see for example~\cite{KRS}. 

In summary, the goal of this paper is to develop a theory of a BV class in the specific setting of a locally compact, separable topological space $X$,  equipped with a Radon measure $\mu$, a strictly local Dirichlet form $\mathcal{E}$ on $L^2(X,\mu)$ and its associated intrinsic metric $d_\mathcal{E}$, such that $\mu$ is $d_\mathcal{E}$-doubling and there is a $2$-Poincar\'e inequality.  The background and assumptions are established in Section~2.  In Section~3 we propose a notion of BV functions and sets of finite perimeter and prove several fundamental properties following the approach of~\cite{Mr}: these include the Radon measure property of the BV energy seminorm and a co-area formula connecting sets of finite perimeter to BV energy (see Theorem~\ref{lem:Co-area}).
Section~4 is the heart of the paper. We begin by 
comparing the heat semigroup-based Besov class $\mathbf{B}^{p,\alpha/2}(X)$ introduced in~\cite{ABCRST1} to a 
more classical Besov class $B^\alpha_{p,\infty}(X)$ that was defined in~\cite{GKS} from the intrinsic metric $d_\mathcal{E}$ rather than the heat semigroup $P_t$. Under the standing
assumptions ($\mu$ is doubling and $2$-Poincar\'e inequality) we show that $\mathbf{B}^{p,\alpha/2}(X)$ coincides
with $B^\alpha_{p,\infty}(X)$. 
This result is related to the correspondence of metric and heat semigroup  
Besov classes established in~\cite{Pietrushka-Paluba}, but differs in that the latter makes the stronger assumption that $\mu$ is Ahlfors
regular. We then compare  the class $BV(X)$ to
the heat semi-group Besov class $\mathbf{B}^{1,1/2}(X)$ and show that these coincide under the additional
hypothesis that $\mathcal{E}$ supports a weak Bakry-\'Emery curvature condition,
see Theorem~\ref{thm:W=BV}. We also explore a connection between the co-dimension~$1$ Hausdorff
measure of the regular boundary $\partial_\alpha E$ of a set $E$ of finite perimeter (meaning $\mathbf{1}_E\in BV(X)$)
to its perimeter measure $\| D\mathbf{1}_E\|(X)=:P(E,X)$, see Proposition~\ref{lem:Hausdorff-perimeter}. In the last part of  Section~4 we show that if $X$ supports a
quasi Bakry-\'Emery curvature condition and  $p>1$, then  the 
heat semigroup-based Besov class $\mathbf B^{p,1/2}(X)$ coincides with the Sobolev space $W^{1,p}(X)$, see 
Theorems~\ref{eq:Besov-Sobolev}, \ref{thm:BesovLB} and~\ref{thm:BesovUB}. 
Section~5 concludes with a discussion of Sobolev type embedding theorems for Besov and BV classes in the context of 
strictly local Dirichlet spaces satisfying the weak Bakry-\'Emery estimate.
These parallel the classical Sobolev embedding theorems associated with the classical Sobolev and BV classes as in~\cite{MazyaSobolev, AR}. 
The tools of heat semigroup based Besov spaces that will be used were developed in~\cite{ABCRST1}; nevertheless, the present paper can largely be read independently from the latter.



\

\noindent{\bf Acknowledgments.} 
The authors thank Naotaka Kajino for many stimulating and helpful discussions.
The authors also thank the anonymous referee for comments that helped improve the exposition of the paper.
P.A-R.~was partly supported by the Feodor Lynen Fellowship, Alexander von Humboldt Foundation (Germany) the grant DMS \#1951577 and \#1855349 of the NSF (U.S.A.). 
F.B.~was partly supported by the grant DMS \#1660031 of the NSF (U.S.A.) 
and a Simons Foundation Collaboration grant.  
L.R.~was partly supported by the grant DMS \#1659643 of the NSF (U.S.A.).  
N.S.~was partly supported by the grants DMS \#1800161 and \#1500440 of the NSF (U.S.A.). 
A.T.~was partly supported by the grant DMS \#1613025 of the NSF (U.S.A.).

\section{Preliminaries}\label{sec:prelim}
\subsection{Strictly local Dirichlet spaces, doubling measures, and standing assumptions }\label{SubSec:StrictLocal}

Throughout the paper, $X$ will be a separable, locally compact topological  space equipped with a Radon measure $\mu$ supported on
$X$. Let $(\mathcal{E},\mathcal{F}=\mathbf{dom}(\mathcal{E}))$ be a Dirichlet form on $X$, meaning it is a 
densely defined,  closed,  symmetric and Markovian form on $L^2(X)$. The book~\cite{FOT} is a classical  
reference on the theory of Dirichlet forms. We also refer to the foundational papers by K.T. Sturm~\cite{St-I, St-II, St-III}.

We denote by $C_c(X)$ the vector space of all continuous functions with compact support in $X$ and $C_0(X)$ its 
closure with respect to the supremum norm. A core for $(X,\mu,\mathcal{E},\mathcal{F})$ is a subset 
$\mathcal{C}$ of $C_c(X) \cap \mathcal{F}$ which is dense in $C_c(X)$ in the supremum 
norm and dense in $\mathcal{F}$ in the $\mathcal{E}_1$-norm 
\begin{equation}\label{eq:Fnorm}
\|f\|_{\mathcal{E}_1} = \left( \| f \|_{L^2(X)}^2 + \mathcal{E}(f,f) \right)^{1/2}.
\end{equation}
The Dirichlet form $\mathcal{E}$ is called {\em regular} if it admits a core. It is \emph{strongly local} 
if for any two functions $u,v\in\mathcal{F}$ with compact supports such that $u$ is constant in a 
neighborhood of the support of $v$, we have $\mathcal{E}(u,v)=0$ (see~\cite[p. 6]{FOT}). 
We will assume that $(\mathcal{E},\mathcal{F})$ is a strongly local regular Dirichlet form on $L^2(X)$.  

Since $\mathcal{E}$ is regular, for every $u,v\in \mathcal F\cap L^{\infty}(X)$ we can define the energy measure $\Gamma (u,v)$ through the formula
 \[
\int_X\phi\, d\Gamma(u,v)=\frac{1}{2}[\mathcal{E}(\phi u,v)+\mathcal{E}(\phi v,u)-\mathcal{E}(\phi, uv)], \quad \phi\in \mathcal F \cap C_c(X).
\]
Then $\Gamma(u,v)$ can be extended to all $u,v\in \mathcal F$ by truncation (see \cite[Theorem 4.3.11]{ChenFukushima}). According to Beurling and Deny~\cite{Beurling-Deny}, one has then   for $u,v\in \mathcal{F}$
\[
\mathcal E(u,v)=\int_X d\Gamma(u,v)
\]
and $\Gamma(u,v)$ is a signed Radon measure. 

Observe that the energy measures $\Gamma(u,v)$ inherit a strong locality property from $\mathcal E$, namely that $\mathbf 1_Ud\Gamma(u,v)=0$ for any open subset $U\subset X$ and $u,v\in \mathcal F$ such that $u$ is constant on $U$. One can then extend $\Gamma$ to $\mathcal{F}_{\mathrm{loc}}(X)$ defined as
\begin{equation}\label{def:local-F}
\mathcal{F}_{\mathrm{loc}}(X)=\{u\in L_{\mathrm{loc}}^2(X): \forall \text{ compact }K\subset X, \exists v\in \mathcal{F}\text{ such that } u=v|_K \text{ a.e.}\}.
\end{equation}
We will still denote this extension by $\Gamma$ and collect below some of its properties for later use. For proofs, we refer for instance to~\cite[Section 3.2]{FOT} and also \cite[Section 4]{St-I}.
\begin{itemize}
\item {\em Strong locality.} For all $u,v\in \mathcal F_{\mathrm{loc}}(X)$ and all open subset $U\subset X$ on which $u$ is a constant
\[
\mathbf 1_Ud\Gamma(u,v)=0.
\]
\item {\em Leibniz and chain rules.} For all $u \in   \mathcal F_{\mathrm{loc}}(X) ,v\in \mathcal F_{\mathrm{loc}}(X) \cap L_{\mathrm{loc}}^{\infty}(X)$, $w\in  \mathcal F_{\mathrm{loc}}(X) $ and $\eta\in C^1(\mathbb R)$, we have $\eta(u)\in\mathcal{F}_{\text{loc}}$ and
\begin{align*}
d\Gamma(uv,w)&=ud\Gamma(v,w)+vd\Gamma(u,w),\\
d\Gamma(\eta(u),v)&=\eta'(u)d\Gamma(u,v).
\end{align*}
\end{itemize}

With respect to $\mathcal{E}$ one can define the following \emph{intrinsic metric} $d_{\mathcal{E}}$ on $X$ by
\begin{equation}\label{eq:intrinsicmetric}
d_{\mathcal{E}}(x,y)=\sup\{u(x)-u(y)\, :\, u\in\mathcal{F}\cap C_0(X)\text{ and } d\Gamma(u,u)\le d\mu\},
\end{equation}
where the condition $d\Gamma(u,u)\le d\mu$ means that $\Gamma(u,u)$ is absolutely continuous with 
respect to $\mu$ with Radon-Nikodym derivative bounded by $1$. 
The term ``intrinsic metric'' is potentially misleading because in
 general there is no reason why $d_{\mathcal{E}}$ is a metric on $X$ (it could be infinite for a given pair of points $x,y$
or zero for some distinct pair of points). 
However, the setting we work in here will, by definition, rule out this possibility. Namely, we will assume the Dirichlet space to be \textit{strictly local} as defined e.g.\ in~\cite{Lenz11}, which is based on the classical papers \cite{St-II,St-III,St-I,BiroliMosco1,BiroliMosco2,BiroliMosco3}.

\begin{definition}
A strongly local  regular Dirichlet space  is called strictly local if $d_{\mathcal{E}}$ is a metric on $X$ and the topology induced by $d_{\mathcal{E}}$ coincides with the topology on $X$.
\end{definition}


\begin{Example}\label{Cheeger differential structure}
In the context of a complete metric measure space $(X,d,\mu)$  supporting a $2$-Poincar\'e inequality and where 
$\mu$ is doubling, one can construct a Dirichlet form $\mathcal{E}$ with domain $N^{1,2}(X)$ by using a
choice of a Cheeger differential structure as in~\cite{Chee99}. This Dirichlet form is then strictly local and the intrinsic distance 
$d_\mathcal{E}$ is bi-Lipschitz equivalent to the original metric $d$. We refer to \cite{MMS} and the references 
therein for further details. This framework encompasses for instance the one of Riemannian manifolds with 
non-negative Ricci curvature and the one of doubling sub-Riemannian spaces   supporting a $2$-Poincar\'e inequality.
\end{Example}

\begin{Example}\label{ex:fractalshaveintrinsic form}
In the context of fractals, 
strictly local Dirichlet forms appear in  
\cite{Ka12r,Ka13r,Kig08,Kig93,ARKT,HT13,T04,MST,T08,FST,Koskela-Zhou,BiroliMosco1,BiroliMosco2} 
and play an important role in analysis of first-order derivatives in these settings. Whether every local 
Dirichlet form admits a change of measure under which it becomes  strictly local is an open question, though some natural   conditions for this are discussed in~\cite{HT,HKT15}, where it is also proved that $\Gamma$ is the norm of a well defined gradient that may be extended to measurable 1-forms, see~\cite{HRT}.
Without giving details of this analysis, we mention that 
existence of a suitable collection of finite (Dirichlet) energy coordinate functions, which depend only on the Dirichlet form $\mathcal{E}$,
is essentially equivalent to the existence of a measure 
which is compatible with an intrinsic distance. In particular, \cite{HKT15} 
proves existence of a measure which is compatible with an intrinsic distance 
for any  local resistance form  in the sense of Kigami~\cite{KigB,Kig93,Kig:RFQS,Ki09}. 
Thus, any fractal space with a local resistance form has an intrinsic metric and is a strictly local Dirichlet 
form for an appropriate choice of the measure.    
\end{Example}

Now suppose in addition to strict locality we know that open balls have compact closures and that $(X,d_\mathcal{E})$ is complete.  In this setting we may apply~\cite[Lemma~1, Lemma~$1^\prime$]{St-I} to obtain that the distance function $\varphi_x: y\mapsto d_{\mathcal{E}}(x,y)$ on $ X$ is in $\mathcal{F}_{\mathrm{loc}}(X)\cap \mathcal C$ and $d\Gamma(\varphi_x,\varphi_x)\le d\mu$. Then cut-off functions  on intrinsic balls $B(x,r)$ of the form 
\[
\varphi_{x,r}: y\mapsto (r-d_{\mathcal{E}}(x,y))_+
\]
are also in $\mathcal{F}_{\mathrm{loc}}(X)\cap \mathcal C$ and $d\Gamma(\varphi_{x,r},\varphi_{x,r})\le d\mu$ (for all
$r>0$ and $x\in X$). The following lemma will be useful. Its proof will be omitted because it is of a standard type related to that of the McShane extension theorem, see~\cite{Hei}, using approximations
of the form $f_j(x):=\inf\{f(q_i)+K\, \varphi_i(x)\, :\, i\in I\text{ with }i\le j\}$ with $\{q_i\, :\, i\in\mathbb{N}\}$ a countable
dense subset of $X$. 

\begin{lemma}\label{lem:diff-of-Lip}
Let $f:X\to\R$ be locally Lipschitz continuous with respect to $d_{\mathcal{E}}$. Then $f\in\mathcal{F}_{\mathrm{loc}}(X)$
with $\Gamma(f,f)\ll\mu$. If $f$ is locally $K$-Lipschitz, then $\Gamma(f,f)\le K^2\, \mu$. 
\end{lemma}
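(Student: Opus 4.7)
First I would reduce to the case where $f$ is globally $K$-Lipschitz on $X$. Since the conclusion only needs to be verified locally, for any compact $K'\subset X$ one can pick a relatively compact neighborhood on which $f$ is $L$-Lipschitz and apply the McShane formula
\[
\tilde f(x):=\inf\{f(y)+L\,d_\mathcal{E}(x,y):y\in K'\}
\]
to produce a globally $L$-Lipschitz extension agreeing with $f$ on $K'$. By strong locality of $\Gamma$, the density bound for $\tilde f$ transfers to a bound for $f$ on the interior of $K'$, and realizes $f$ in $\mathcal F_{\mathrm{loc}}(X)$.

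With $f$ globally $K$-Lipschitz, I enumerate a countable dense set $\{q_i\}\subset X$, abbreviate $\varphi_{q_i}(x):=d_\mathcal{E}(q_i,x)$, and set
\[
f_j(x):=\min_{1\le i\le j}\bigl\{f(q_i)+K\,\varphi_{q_i}(x)\bigr\}.
\]
By the discussion just preceding the lemma, each $\varphi_{q_i}$ lies in $\mathcal F_{\mathrm{loc}}\cap\mathcal C$ with $d\Gamma(\varphi_{q_i},\varphi_{q_i})\le d\mu$. Since $f$ is $K$-Lipschitz, each summand is tight at $x=q_i$, giving $f_j\downarrow f$ pointwise and, by equi-Lipschitz continuity, locally uniformly. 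For the energy bound I use $u\wedge v=\tfrac12(u+v-|u-v|)$ together with the chain rule for $|\cdot|$ (applied in a regularized form) and strong locality on $\{u=v\}$, which yield the pointwise measure identity
\[
d\Gamma(u\wedge v,u\wedge v)=\mathbf 1_{\{u<v\}}\,d\Gamma(u,u)+\mathbf 1_{\{u>v\}}\,d\Gamma(v,v).
\]
An induction on $j$ then gives $f_j\in\mathcal F_{\mathrm{loc}}$ with $d\Gamma(f_j,f_j)\le K^2\,d\mu$.

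Finally I pass to the limit. Multiplying by a distance cut-off $\chi=\varphi_{x,r}$, the Leibniz rule and the uniform local boundedness of $f_j$ give
\[
\mathcal E(\chi f_j,\chi f_j)\le 2\int\chi^2\,d\Gamma(f_j,f_j)+2\int f_j^2\,d\Gamma(\chi,\chi)\le 2K^2\|\chi\|_{L^2}^2+2\|f\|_{L^\infty(\mathrm{supp}\chi)}^2\,\mu(\mathrm{supp}\chi),
\]
a uniform bound in $j$. Since $\chi f_j\to\chi f$ in $L^2$, closedness of $(\mathcal E,\mathcal F)$ together with weak compactness in $\mathcal F$ place $\chi f\in\mathcal F$, so $f\in\mathcal F_{\mathrm{loc}}$. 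For the sharp measure bound, given nonnegative $\phi\in C_c(X)\cap\mathcal F$ I choose $\chi\equiv 1$ on an open neighborhood of $\mathrm{supp}\,\phi$; strong locality then reduces matters to showing $\int\phi\,d\Gamma(\chi f,\chi f)\le K^2\int\phi\,d\mu$. Applying Mazur's lemma to the weak convergence $\chi f_j\rightharpoonup\chi f$ in $\mathcal F$ produces convex combinations converging strongly in $\mathcal E_1$, and since $u\mapsto\bigl(\int\phi\,d\Gamma(u,u)\bigr)^{1/2}$ is an $\mathcal E_1$-continuous seminorm, hence convex, the bound $K^2\int\phi\,d\mu$ is preserved in the limit.

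The main obstacle is this last step: the energy measure is bilinear but not obviously lower semicontinuous under weak $\mathcal F$-convergence on $L^2$. The Mazur--convexity route bypasses this by trading weak convergence for a strong approximation on which continuity of the $\phi$-weighted seminorm is straightforward; the algebraic induction through $u\wedge v$ is the other technical ingredient but is standard once the identity is in hand.
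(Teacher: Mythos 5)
Your proof is correct and carries out precisely the McShane-type approximation $f_j(x)=\min_{i\le j}\{f(q_i)+K\,d_{\mathcal E}(q_i,x)\}$ that the paper itself sketches; the paper omits the argument as standard, and your filling-in (reduction to a globally Lipschitz extension via strong locality, the induction on finite minima, and the Mazur-lemma passage using that $u\mapsto(\int\phi\,d\Gamma(u,u))^{1/2}$ is an $\mathcal E_1$-continuous seminorm) is exactly the intended route. One small slip: your displayed identity for $d\Gamma(u\wedge v,u\wedge v)$ should also carry a term on $\{u=v\}$ (where $d\Gamma(u,u)=d\Gamma(v,v)=d\Gamma(u,v)$), but since you only use the resulting upper bound $d\Gamma(u\wedge v,u\wedge v)\le\max\{d\Gamma(u,u),d\Gamma(v,v)\}$, the argument is unaffected.
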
 


At many places in the paper we will need to approximate using locally Lipschitz functions and use locally Lipschitz cutoffs, so
it is important that these functions are dense in $L^1(X)$.  This is a consequence of density of simple functions and the fact that $\mu$ is Radon, for example by using the observation that if $K$ is compact and $U\supset K$ is open with $\mu(U\setminus K)<\eps$ then $\bigl\| \mathbf{1}_K - (1-d(x,K)/d(U^c,K))\bigr\|_{L^1}<\eps$ is a Lipschitz approximation of $\mathbf{1}_K$.

Now we come to the final assumption which will be made throughout the paper, namely that $\mu$ is volume doubling.
\begin{definition}
We say that the metric measure space $(X,d_{\mathcal{E}},\mu)$ satisfies the volume doubling property if 
there exists a constant $C>0$ such that for every $x\in X$ and $r>0$,
\[
\mu(B(x,2r))\le C\, \mu(B(x,r)).
\]
\end{definition}

It follows from the doubling property of $\mu$ (see~\cite{Hei}) that there is a constant $0<Q\ <\infty$ and $C\ge 1$ such that 
whenever $0<r\le R$ and $x\in X$, we have 
\begin{equation}\label{eq:mass-bounds}
 \frac{\mu(B(x,R))}{\mu(B(x,r))}
 \le C\left(\frac{R}{r}\right)^Q.
\end{equation}

Another well-known consequence of the doubling property is the availability of a maximally separated $\eps$-covering as defined below. 
\begin{definition}\label{def:maxsepcover}
Let $U\subset X$ be a non-empty  subset and let $\eps>0$. A maximally separated $\eps$-covering is a family of  balls $\{B_i^\eps=B(x_i^\eps,\eps)\}_i$ such that 
\begin{itemize}
\item The collection $\{B_i^{\eps/2}\}_i$ is  a maximal pairwise disjoint family of balls with radius $\eps/2$;
\item The collection $\{B_i^\eps\}_i$ covers $U$, that is, $U=\bigcup_i B_i^\eps$;
\item For any $C>1$ there exists $K=K(C)\in \mathbb N$ such that each point $x\in X$ is contained in at most $K$ 
balls from the family $\{B_i^{C\eps}\}_i$.
\end{itemize}
\end{definition}

We now summarize the assumptions that will be in force throughout the paper.
\begin{assumption}\label{ass:I}
\ 
\begin{itemize}
\item The Dirichlet space $(X,\mu,\mathcal{E},\mathcal{F})$ is strictly local, so $\mathcal{E}$ is strongly local and regular and 
$d_{\mathcal{E}}$ is a metric on $X$ that induces the topology on $X$;
\item the metric space $(X,d_{\mathcal{E}})$ is complete;
\item $\mu$ is volume doubling;
\end{itemize}
\end{assumption}

We will use the following consequences of these assumptions without further comment: closed and bounded subsets of $(X,d_{\mathcal{E}})$ are compact, and locally Lipschitz functions are dense in $L^1$. 
A commonly used notation throughtout the paper is that if $\Gamma(f,f)$ is absolutely continuous with respect to $\mu$, as is the case for locally Lipschitz functions, then $|\nabla f|$ denotes the square root of its Radon-Nikodym derivative, so  $d\Gamma(f,f)=|\nabla f|^2d\mu$. 

It should be noted that with the exception of some parts of Section~3, we will typically also assume existence of a 
$2$-Poincar\'e inequality, which is discussed next.


\subsection{The  2-Poincar\'e inequality} \label{sec:D-P}
Let $(X,\mu,\mathcal{E},\mathcal{F})$ be a strictly local regular Dirichlet space as in Section \ref{SubSec:StrictLocal}. 



\begin{definition}\label{def:Poincareineq}
We say that $(X,\mu,\mathcal{E},\mathcal{F})$ supports the $2$-Poincar\'e
inequality if there are constants $C>0$ and $\lambda\ge 1$ such that 
whenever $B$ is a ball in $X$ and $u\in\mathcal{F}$, we have
\[
\frac{1}{\mu(B)}\int_B|u-u_B|\, d\mu\le C\, \mathrm{rad}\,(B)\, \left(\frac{1}{\mu(\lambda B)}\, \int_{\lambda B}d\Gamma(u,u)\right)^{1/2}.
\]
where $\mathrm{rad}\,(B)$ is the radius of $B$ and $\lambda B$ denotes a concentric ball of radius $\lambda\mathrm{rad}\,(B)$.
\end{definition}
\begin{remark}
 The $2$-Poincar\'e inequality does not need
$\mathcal{E}$ to be strictly local, but it does need it to be regular, in order for the measure $\Gamma(u,u)$ 
representing the Dirichlet energy of $u\in\mathcal{F}$ to exist, see~\cite{FOT} for more details.  However we will always be considering strictly local forms.
\end{remark}

\begin{example}
\em Examples of strictly local Dirichlet spaces $(X,\mu,\mathcal{E},\mathcal{F})$ that satisfy  the volume doubling property and support the 2-Poincar\'e inequality include:
\begin{itemize}
\item Complete Riemannian manifolds with non-negative Ricci curvature or more generally 
RCD$(0,N)$ spaces in the sense of Ambrosio-Gigli-Savar\'e \cite{AGS14};
\item Carnot groups and other complete sub-Riemannian manifolds satisfying a generalized curvature dimension inequality (see \cite{BB,BK14});
\item Doubling metric measure spaces that support a $2$-Poincar\'e inequality with respect to the upper gradient
structure of Heinonen and Koskela (see~\cite{HKST,Koskela-Shanmugalingam-Zhou,Koskela-Zhou});
\item Metric graphs with bounded geometry (see \cite{Ha11}).
\end{itemize}
\end{example}

When the 2-Poincar\'e inequality is satisfied, a standard argument due to Semmes tells us that locally 
Lipschitz continuous functions
form a dense subclass of $\mathcal{F}$, where $\mathcal{F}$ is equipped with the norm~\eqref{eq:Fnorm},
see for example~\cite[Theorem 8.2.1]{HKST}. Moreover, by~\cite{Koskela-Shanmugalingam-Tyson04}, we know that if 
the $2$-Poincar\'e inequality is satisfied and $\mu$ is doubling, then the 
Newton-Sobolev class (based on upper gradients, see
also~\cite{Koskela-Shanmugalingam-Tyson04}) is the same as the class $\mathcal{F}$, with
comparable energy seminorms. 


The next lemma is used to define a \textit{length of the gradient} in the current setting and shows that the Dirichlet  form admits a carr\'e du champ operator.  In particular, the quantity $|\nabla u|$ is an upper gradient of $u$ in the sense of~\cite{Koskela-Shanmugalingam-Tyson04}, and it follows that $u\in\mathcal{F}$ satisfies some a-priori stronger Poincar\'e inequalities in which the integrability exponent for $|u-u_B|$ in Definition~\ref{def:Poincareineq} is higher, see~\cite{HK00}.

\begin{lemma}\label{absolute continuity}
Suppose that $(X,\mu,\mathcal{E},\mathcal{F})$ satisfies the doubling property and supports the $2$-Poincar\'e
inequality. Then for all $u\in\mathcal{F}$, we have $d\Gamma(u,u)\ll\mu$. The Radon-Nikodym derivative
$\tfrac{d\Gamma(u,u)}{d\mu}$ is denoted by $|\nabla u|^2$.
\end{lemma}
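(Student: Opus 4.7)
The plan is to reduce to the locally Lipschitz case handled by Lemma~\ref{lem:diff-of-Lip} by using density plus a total-variation convergence argument based on the Cauchy--Schwarz inequality for energy measures. Concretely, under Assumption~\ref{ass:I} together with the $2$-Poincar\'e inequality, the preceding paragraph records (via Semmes' argument, cf.~\cite[Theorem~8.2.1]{HKST}) that locally Lipschitz functions are dense in $(\mathcal{F},\|\cdot\|_{\mathcal{E}_1})$. Given $u\in\mathcal{F}$, I would fix a sequence $\{u_n\}$ of locally Lipschitz functions with $\|u_n-u\|_{\mathcal{E}_1}\to 0$, so that in particular $\mathcal{E}(u_n-u,u_n-u)=\Gamma(u_n-u,u_n-u)(X)\to 0$. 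By Lemma~\ref{lem:diff-of-Lip}, each $\Gamma(u_n,u_n)$ is absolutely continuous with respect to $\mu$.

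The second step is to upgrade $\mathcal{E}_1$-convergence to total variation convergence of the energy measures. Expanding by bilinearity gives
\[
\Gamma(u_n,u_n)-\Gamma(u,u)=2\,\Gamma(u,u_n-u)+\Gamma(u_n-u,u_n-u),
\]
and the Cauchy--Schwarz inequality for energy measures (a standard consequence of the pointwise inequality for Radon--Nikodym densities with respect to a common dominating measure, e.g.~$\Gamma(u,u)+\Gamma(u_n-u,u_n-u)$) yields for every Borel set $E\subset X$
\[
|\Gamma(u,u_n-u)|(E)\le \Gamma(u,u)(E)^{1/2}\,\Gamma(u_n-u,u_n-u)(E)^{1/2}.
\]
Taking $E=X$, together with the fact that $\Gamma(u,u)(X)=\mathcal{E}(u,u)<\infty$ since $u\in\mathcal{F}$, gives
\[
\bigl\|\Gamma(u_n,u_n)-\Gamma(u,u)\bigr\|_{TV}\le 2\,\mathcal{E}(u,u)^{1/2}\,\mathcal{E}(u_n-u,u_n-u)^{1/2}+\mathcal{E}(u_n-u,u_n-u)\longrightarrow 0.
\]

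Finally, total variation convergence preserves absolute continuity: if $N\subset X$ is Borel with $\mu(N)=0$, then $\Gamma(u_n,u_n)(N)=0$ for every $n$, so $\Gamma(u,u)(N)=\lim_n \Gamma(u_n,u_n)(N)=0$. This proves $\Gamma(u,u)\ll\mu$, and the Radon--Nikodym derivative can then be denoted $|\nabla u|^2$. The main obstacle to worry about is really the passage from $\mathcal{E}_1$-convergence to total variation convergence of the measures themselves, since weak convergence alone would not suffice; this is precisely what the energy-measure Cauchy--Schwarz inequality bridges. An alternative route would identify $\mathcal{F}$ with $N^{1,2}(X)$ via~\cite{Koskela-Shanmugalingam-Tyson04} and transfer absolute continuity from the minimal upper gradient of $u$, but the approximation argument above is more direct and self-contained within the Dirichlet form framework.
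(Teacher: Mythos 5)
Your proof is correct, but it reaches the conclusion by a genuinely different route than the paper. Both arguments rest on the same high-level strategy — approximate $u\in\mathcal F$ by locally Lipschitz functions $u_n$ (each with $\Gamma(u_n,u_n)\ll\mu$ by Lemma~\ref{lem:diff-of-Lip}) and then pass absolute continuity to the limit — but the two proofs invert what is cited versus what is proved. The paper does \emph{not} invoke density of locally Lipschitz functions as a black box: it constructs the approximants $u_\eps$ explicitly by discrete convolution against a Lipschitz partition of unity subordinate to a maximally separated cover, verifies $u_\eps\to u$ in $L^2$ and $\sup_\eps\int|\nabla u_\eps|^2\,d\mu\le C\mathcal E(u,u)$ using the 2-Poincar\'e inequality and doubling, and then upgrades weak convergence to $\mathcal E_1$-convergence via reflexivity and Mazur's lemma. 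For the transfer of absolute continuity across the $\mathcal E_1$-limit, the paper cites \cite[Lemma~2.1]{Hino2017}. You, by contrast, take the density as given (as the paper's preceding paragraph asserts via Semmes' argument) and instead \emph{prove} the transfer step directly: the bilinearity expansion plus the Cauchy--Schwarz inequality for energy measures gives convergence of $\Gamma(u_n,u_n)$ to $\Gamma(u,u)$ in total variation, which immediately preserves absolute continuity. In effect you are re-deriving the content of Hino's lemma in a self-contained way, while relying on the paper for the Semmes-type approximation. Both are valid, and the Cauchy--Schwarz/total-variation argument you give is a nice explicit replacement for the citation of \cite{Hino2017}; the one caveat is that the density of locally Lipschitz functions in $(\mathcal F,\|\cdot\|_{\mathcal E_1})$, which you lean on, is itself proved by essentially the same discrete-convolution construction the paper carries out inside the proof of this lemma, so the paper's version is the more self-contained of the two even though it uses one extra external reference.
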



\begin{proof}
It is known~\cite[Lemma~2.1]{Hino2017} that if $u\in \mathcal{F}$ is the $\mathcal{E}_1$-limit (i.e.\ limit in the norm~\eqref{eq:Fnorm}) of functions $v_j\in\mathcal{F}$ satisfying $d\Gamma(v_j,v_j)\ll\mu$ then also $d\Gamma(u,u)\ll\mu$. By the assumed regularity of the Dirichlet form, we may therefore assume $u\in\mathcal{F}\cap C_c(X)$ and proceed to show that $u$ is such an  $\mathcal{E}_1$-limit.

Fix $\eps>0$. Let $\{B_i^\eps=B(x_i^\eps,\eps)\}_i$ be a maximally separated $\eps$-covering of $X$ as in Definition~\ref{def:maxsepcover}, so that   the family $\{B_i^{C\eps}\}_i$ has the bounded overlap property for any $C>1$. Let $\pip_i^\eps$ be a $(C/\eps)$-Lipschitz partition of unity subordinated to this cover: that is, 
$0\le \pip_i^\eps\le 1$ on $X$, $\sum_i\pip_i^\eps=1$ on $X$, and $\pip_i^\eps=0$ in $X\setminus B_i^{2\eps}$. We then
set 
\[
u_\eps:=\sum_i u_{B_i^\eps}\, \pip_i^\eps,
\]
where $u_{B_i^\eps}=\vint_{B_i^\eps} ud\mu$.

Since $u\in C_c(X)$ it is elementary that $u_\eps\to u$ in $L^2(X,\mu)$. Indeed, using the bounded overlap of the balls $B_i^{2\eps}$ we see that, as $\eps\to0$,
\begin{equation*}
\| u -u_\eps  \|_2^2 
\leq \sum_i \int_{B_i^{2\eps}} | u(x) -u_{B_i^\eps}|^2\,d\mu 
\leq  C\mu(\mathrm{sppt}(u))  \sup_i \sup_{x\in B_i^\eps} | u(x) -u_{B_i^\eps}|^2 \to 0.
\end{equation*}

Now each $\pip_i^\eps$ is Lipschitz, so we know that $u_\eps$ is locally Lipschitz and hence is in $\mathcal{F}_{\mathrm{loc}}(X)$. Indeed,
for $x,y\in B_j^\eps$ we have from the $2$-Poincar\'e inequality, the fact that $B_i^{2\eps}\cap B_j^{2\eps}\neq\emptyset$ implies $B_i^{2\eps}\subset B_j^{6\eps}$ and the volume doubling property, that
\begin{align*}
|u_\eps(x)-u_\eps(y)|
&\le \sum_{i:B_i^{2\eps}\cap B_j^{2\eps}\ne\emptyset}|u_{B_i^\eps}-u_{B_j^\eps}||\pip_i^\eps(x)-\pip_i^\eps(y)|\\
 &\le \frac{C\, d(x,y)}{\eps} \sum_{i:B_i^{2\eps}\cap B_j^{2\eps}\ne\emptyset}
 \bigl( |u_{B_i^\eps} - u_{B_j^{6\eps}}| + |u_{B_j^\eps}- u_{B_j^{6\eps}}| \bigr) \\
 &\leq  \frac{C\, d(x,y)}{\eps} \vint_{B_j^{6\eps}} |u(y)-u_{B_j^{6\eps}}| \, d\mu(y) \\
&\le C\, d(x,y) \left(\frac{1}{\mu(B_i^{6\lambda\eps})} \int_{ B_j^{6\lambda\eps}}  d\Gamma(u,u)\right)^{1/2}. 
\end{align*}
It follows from Lemma \ref{lem:diff-of-Lip} that $\Gamma(u_\eps,u_\eps)\ll \mu$ and we recall that the Radon-Nikodym measure is denoted by 
$|\nabla u_{\eps}|^2$. Moreover, we also have on $B_i^\eps$ that 
\[
d\Gamma(u_\eps,u_\eps)\le C \left(\frac{1}{\mu(B_i^{6\lambda\eps})}\int_{\lambda B_i^{6\lambda\eps}}  d\Gamma(u,u)\right) d\mu.
\]
Using the doubling measure property again, and the bounded overlap of $\{B_i^{6\lambda\epsilon}\}$, this yields
\begin{align}\label{eq:Lip-Convergence}
\int_X |\nabla u_{\eps}|^2d\mu
=\mathcal E(u_\eps,u_\eps) 
\le \sum_i \int_{B_i^{\eps}} d\Gamma(u_\eps,u_\eps) 
\le C \sum_i \frac{\mu(B_i^{\eps})}{\mu( B_i^{6\lambda\eps})}\int_{ B_i^{6\lambda\eps}}  d\Gamma(u,u)\le C\mathcal E(u,u).
\end{align}


Take a sequence $\eps_n\to 0^+$. From \eqref{eq:Lip-Convergence} and the  reflexivity of $\mathcal{F}$,
there exists a subsequence of $\{ u_{\eps_n}\}_n$ that is weakly convergent in $\mathcal{F}$. 
By Mazur's lemma, a sequence of convex combinations of $u_{\eps_n}$ (still denoted by $\{u_{\eps_n}\}$) converges in the $\mathcal{E}_1$-norm~\eqref{eq:Fnorm}.  Let the $\mathcal{E}_1$-limit of $\{u_{\eps_n}\}$ be denoted $v$. 
Since the $L^2$-norm is part of this norm and we know that $u_{\eps_n}\to u$ in $L^2(X)$, 
we have  $u=v$.   Thus we have $u_{\eps_n}\to u$ in $\mathcal{E}_1$-norm and $d\Gamma(u_{\eps_n},u_{\eps_n})\ll\mu$, which concludes the proof.
\end{proof}

\begin{definition}
Let $1\le p<\infty$. We say that $(X,\mu,\mathcal{E},\mathcal{F})$ supports a $p$-Poincar\'e inequality if there are constants
$C>0$ and $\lambda\ge 1$ such that whenever $B$ is a ball in $X$ and $u\in\mathcal{F}$, we have
\[
\frac{1}{\mu(B)}\int_B|u-u_B|\, d\mu\le C\, \mathrm{rad}(B)\, \left(\frac{1}{\mu(\lambda B)}\, \int_{\lambda B}|\nabla u|^p\, d\mu\right)^{1/p}.
\]
\end{definition}
Of course, the $p$-Poincar\'e inequality for any $p\ne 2$ does not make sense if $\mathcal{E}$
does not satisfy the condition of strict locality. 
The requirement that $\mathcal{E}$ supports a $1$-Poincar\'e inequality is a significantly stronger requirement than supporting a $2$-Poincar\'e inequality.

Much of the current
theory on functions of bounded variation in the metric setting requires a
$1$-Poincar\'e inequality, though the theory can be constructed without this~\cite{AmbrosioDiMarino}. In this paper we will \emph{not} require that $(X,\mu,\mathcal{E},\mathcal{F})$ supports a $1$-Poincar\'e inequality but only the weaker $2$-Poincar\'e inequality.  However in some of our analysis we will need an additional requirement called the weak
Bakry-\'Emery curvature condition.

\subsection{Sobolev classes $W^{1,p}(X)$}

The theory of Sobolev spaces was first advanced in order to prove solvability of certain PDEs, see for example
 \cite{Evans,MazyaSobolev}. When $X$ is a Riemannian manifold, a function 
$f\in L^p(X)$ is said to be in the Sobolev class $W^{1,p}(X)$ if its distributional derivative is given by a 
vector-valued function $\nabla f\in L^p(X:\R^n)$. Extensions of this idea to sub-Riemannian spaces have been considered in \cite{GN96}. However, in more general metric spaces where the distributional theory of derivatives
(which relies on integration by parts) is unavailable, an alternate notion of derivatives needs to be found. 
Indeed, we do not need an alternative to $\nabla f$, as long as we have a substitute for $|\nabla f|$.
For metric spaces
$X$, Lipschitz functions $f:X\to \R$ have a natural such alternative, $\text{Lip} f$, given by
\[
\text{Lip} f(x):=\limsup_{r\to 0^+}\sup_{y\in B(x,r)}\frac{|f(y)-f(x)|}{r}.
\]
Other notions such as upper gradients and Haj\l asz gradients play this substitute role well, see for example~\cite{HKST}.
In the current paper we consider another possible notion of $|\nabla f|$ which has a more natural affinity to the heat 
semigroup and the Dirichlet form, as in Lemma~\ref{absolute continuity}. 
So, in this paper, our definition of $W^{1,p}(X)$, $p \ge 1$  is the following:
\begin{align}\label{definition:W1p}
W^{1,p}(X)=\left\{ u \in L^p(X)\cap\mathcal{F}_{\mathrm{loc}}(X)\, :\,  \Gamma(u,u)\ll\mu, |\nabla u|\in L^p(X) \right\}.
\end{align}
The norm on  $W^{1,p}(X)$ is then  given by 
\[
\| u \|_{W^{1,p}(X)}=\| u \|_{L^p(X)} + \|  |\nabla u| \|_{L^p(X)}.
\]
%
Note, in particular, that $W^{1,2}(X)=\mathcal{F}$.
In the context of Sobolev spaces, Besov function classes arise naturally in two ways. Given a Sobolev class
$W^{1,p}(\R^{n+1})$ and a bi-Lipschitz embedding of $\R^n$ into $\R^{n+1}$, there is a natural trace of 
functions in $W^{1,p}(\R^{n+1})$ to the embedded surface, and this trace belongs to a Besov class, see
for example \cite{JW1,JW2}. 
Besov classes also arise via real interpolations of $L^p(\R^n)$ and $W^{1,p}(\R^n)$, see for example
 \cite{BennettSharpley,Trie}. In the present paper we will
relate Sobolev classes $W^{1,p}(X)$ to two types of Besov classes defined in our previous  paper~\cite{ABCRST1}, see
Theorems~\ref{thm:BesovLB},~\ref{thm:BesovUB}, and~\ref{eq:Besov-Sobolev}. One of these types of Besov classes is defined from the heat semigroup, while the other uses only the metric structure of $X$. We note that previous metric characterizations of Sobolev spaces in the presence of doubling and 2-Poincar\'e have been studied in \cite{DMSq19}.


\subsection{Bakry-\'Emery curvature conditions}

Let $\{P_{t}\}_{t\in[0,\infty)}$ denote the self-adjoint  semigroup of contractions on $L^2(X,\mu)$ associated with the 
Dirichlet space $(X,\mu,\mathcal{E},\mathcal{F})$ and $L$ the infinitesimal generator of $\{P_{t}\}_{t\in[0,\infty)}$. 
The semigroup $\{P_{t}\}_{t\in[0,\infty)}$ is referred to as the heat semigroup on $(X,\mu,\mathcal{E},\mathcal{F})$. 
For classical properties of $\{P_{t}\}_{t\in[0,\infty)}$, we refer to~\cite[Section 2.2]{ABCRST1}. In particular, it is known that that doubling property together with the 2-Poincar\'e inequality imply that the semigroup $\{P_{t}\}_{t\in[0,\infty)}$ is conservative, i.e. $P_t 1=1$.

The work of Sturm \cite{St-II,St-III} (see Saloff-Coste~\cite{Sal-Cos} and Grigor'yan \cite{Gri91} for earlier 
results on Riemannian manifolds) tells us that doubling property together with the 2-Poincar\'e inequality 
are equivalent to the property that the heat semigroup $P_t$ admits a heat kernel function $p_t(x,y)$ on 
$[0,\infty)\times X\times X$ for which there are constants $c_1, c_2, C >0$ such that
whenever $t>0$ and $x,y\in X$,
\begin{equation}\label{eq:heat-Gauss}
 \frac{1}{C}\, \frac{e^{-c_1 d(x,y)^2/t}}{\sqrt{\mu(B(x,\sqrt{t}))\mu(B(y,\sqrt{t}))}}\le p_t(x,y)
   \le C\, \frac{e^{-c_2 d(x,y)^2/t}}{\sqrt{\mu(B(x,\sqrt{t}))\mu(B(y,\sqrt{t}))}}.
\end{equation}
The above inequalities are called  \emph{Gaussian bounds} for the heat kernel. Due to the doubling property, 
one can equivalently rewrite the Gaussian bounds as:
\begin{equation}\label{eq:heat-Gauss2}
 \frac{1}{C}\, \frac{e^{-c_1 d(x,y)^2/t}}{\mu(B(x,\sqrt{t}))}\le p_t(x,y)
   \le C\, \frac{e^{-c_2 d(x,y)^2/t}}{\mu(B(x,\sqrt{t}))},
\end{equation}
for some different constants $c_1, c_2, C >0$.
The combination of the doubling property and the 2-Poincar\'e inequality also implies the following H\"older regularity of the heat kernel
\begin{equation*}\label{Holder}
|p_t(x,y)-p_t(z,y)|\le \left(\frac{d(x,z)}{\sqrt t}\right)^{\alpha} \frac{C}{\mu(B(y,\sqrt t))},
\end{equation*}
for some $C>0$, $\alpha \in (0,1)$, and all $x,y,z\in X$ (see for instance \cite{saloff2002}).  In some parts of this paper,  
we need a stronger  condition than H\"older regularity for the heat kernel, in which case we will use the following uniform Lipschitz continuity property. 
 
 \begin{definition}\label{def:wBE}
 We  say that the Dirichlet metric space $(X,\mathcal{E}, d_\mathcal{E},\mu)$ satisfies a weak Bakry-\'Emery curvature 
 condition if, whenever $u\in \mathcal{F}\cap L^\infty(X)$ and $t>0$,
\begin{equation}\label{eq:weak-BE}
\Vert |\nabla P_tu| \Vert_{L^\infty(X)}^2\le \frac{C}{t}\Vert u\Vert_{L^\infty(X)}^2.
\end{equation}
\end{definition}
 
We refer to \eqref{eq:weak-BE}  as a weak Bakry-\'Emery curvature condition because, in many settings, its validity is 
related to the existence of curvature lower bounds on the underlying space. 

\begin{example}
\em The weak Bakry-\'Emery curvature condition is satisfied in the following examples:
\begin{itemize}
\item Complete Riemannian manifolds with non-negative Ricci curvature and more generally, the 
$RCD(0,+\infty)$ spaces (see \cite{Jiang15});
\item Carnot groups (see \cite{BB2});
\item Complete sub-Riemannian manifolds with generalized non-negative Ricci curvature (see \cite{BB,BK14});
\item  On non-compact metric graphs with finite number of edges, the weak Bakry-\'Emery curvature condition has been proved to hold for $t \in (0,1]$  (see   \cite[Theorem  5.4]{BK}), and is conjectured to be true for all $t$. If the graph is moreover compact, the weak Bakry-\'Emery estimate holds for every $t>0$ \cite[Theorem  5.4]{BK}) .
%
\end{itemize}
Several statements equivalent to the weak Bakry-\'Emery curvature condition are given in~\cite[Theorem~1.2]{CJKS}.  
There are some metric measure spaces equipped with a doubling measure supporting a $2$-Poincar\'e inequality 
but without the above weak Bakry-\'Emery condition, see for example~\cite{KRS}.  It should also be noted that, 
in the setting of complete sub-Riemannian manifolds with generalized non-negative Ricci curvature in the sense 
of~\cite{BG},  while the weak Bakry-\'Emery curvature condition is known to be satisfied (see \cite{BB,BK14}), 
the $1$-Poincar\'e inequality is so far not known to hold, though the $2$-Poincar\'e 
inequality is known to be always  satisfied, see \cite{BBG}.
\end{example}

We will also sometimes need a condition that is stronger than \eqref{eq:weak-BE}.

\begin{definition}
 We say that the Dirichlet metric space $(X,\mathcal{E}, d_\mathcal{E},\mu)$ satisfies a quasi Bakry-\'Emery curvature condition if there exists a constant 
$C>0$ such that for every $u \in \mathcal{F}$ and $t \ge 0$ we have $\mu$ a.e.
\begin{equation}\label{eq:strong-BE}
 |\nabla P_t u|  \le C P_t |\nabla u|  .
 \end{equation}
\end{definition}

The quasi Bakry-\'Emery curvature condition  implies the weak one, as is demonstrated in the proof of Theorem 3.3 in~\cite{BK}. Examples where the quasi Bakry-\'Emery estimate is satisfied include: Riemannian manifolds with non negative Ricci curvature  (in that case $C=1$, see \cite{MR2142879}), some metric graphs like the Walsh spider (see \cite[Example 5.1]{BK} and also \cite[Theorem  5.4]{BK})), the Heisenberg group and more generally H-type groups (see \cite{BBBC, Eldredge}).

The quasi Bakry-\'Emery curvature condition, while stronger than the weak Bakry-\'Emery curvature condition~\eqref{eq:weak-BE}, 
does not explicitly consider any dimension. In fact, it is weaker than a standard condition called the
Bakry-\'Emery  condition $BE(0,\infty)$ in strongly local Dirichlet spaces, see~\cite[Definition 3.1]{MR3121635} and also~\cite{EKS,AES16}. The $BE(0,\infty)$ condition is said to be
satisfied if, in the weak sense of  Definition 3.1 in \cite{MR3121635},   the Bochner-inequality  
\[
\frac12\left[L\Gamma(f,f)-2\Gamma(f,L f)\right]
  \ge 0,
\]
holds, where $L$ is the infinitesimal generator associated with the Dirichlet form. Under some regularity condition, by~\cite[Corollary 3.5]{MR3121635} the latter implies the gradient bound  $|\nabla P_t u|  \le  P_t |\nabla u|$, i.e. \eqref{eq:strong-BE} with $C=1$.  Thus, in the setting of this paper, the quasi Bakry-\'Emery curvature condition~\eqref{eq:strong-BE} is indeed weaker than $BE(0,\infty)$.


In the context of RCD spaces, the  relation between the conditions RCD$(0,\infty)$  and $BE(0,\infty)$ is discussed in detail in \cite{MR3990192},  \cite{AGS14} and \cite{MR3494239}.

\section{BV class and co-area formula}\label{subsec:BV}

In this section we use the Dirichlet form and the associated family $\Gamma(\cdot,\cdot)$ of measures to construct a BV class
of functions on $X$. To do so, we only need $\mu$ to be a doubling measure on $X$ for $d_\mathcal{E}$ and the class of 
locally Lipschitz functions to be dense in $L^1(X)$. So in this section we do \emph{not} need the 2-Poincar\'e inequality
nor do we need the weak Bakry-\'Emery curvature condition. In the second part of the section we prove a co-area 
formula for BV functions; such a co-area formula is highly useful in  understanding the structure of BV functions, 
and underscores the importance of studying sets of finite perimeter (sets whose characteristic functions are BV functions).

\subsection{BV class}

In this subsection we will construct a BV class based on Dirichlet forms. The motivation for this definition
comes from the work of Miranda~\cite{Mr}.   In particular, in the context of a doubling  metric measure space $(X,d,\mu)$ supporting a $1$-Poincar\'e inequality,  where the Dirichlet form is given in terms of a Cheeger differential structure 
(see Example~\ref{Cheeger differential structure}), 
the  construction of $BV(X)$ and $\| Df \|$ is exactly that in~\cite{Mr}. It is also proved in~\cite{Mr} that this construction yields the usual notion of variation when applied to Riemannian or sub-Riemannian spaces.

We  set the \emph{core} of the Dirichlet form,
$\mathcal{C}(X)$, to be the class of all $f\in\mathcal{F}_{\mathrm{loc}}(X)\cap C(X)$ such that $\Gamma(f,f)\ll \mu$
and recall that the Sobolev class $W^{1,1}(X)$ is  the class of all $f\in\mathcal{F}_{\mathrm{loc}}(X)\cap L^1(X)$ for which
$\Gamma(f,f)\ll\mu$ and $|\nabla f|\in L^1(X)$ (see Definition \eqref{definition:W1p}).
\begin{definition}\label{def-BV}
We say that $u\in L^1(X)$ is in $BV(X)$ if there is a sequence of local Lipschitz functions $u_k\in L^1(X)$
such that $u_k\to u$ in $L^1(X)$ and 
\[
\liminf_{k\to\infty}\int_X|\nabla u_k|\, d\mu<\infty.
\]
\end{definition}
We note that if the Dirichlet form supports a $1$-Poincar\'e inequality, then  the Sobolev space $W^{1,1}(X)$ is a subspace of $BV(X)$.

\begin{definition}\label{def-Du}
For $u\in BV(X)$ and open sets $U\subset X$, we set
\[
\Vert Du\Vert(U)=\inf_{u_k\in\mathcal{C}(U), u_k\to u\text{ in }L^1(U)}\liminf_{k\to\infty} \int_U|\nabla u_k|\, d\mu.
\]
\end{definition}
We will see in the next part of this section that $\Vert Du\Vert$ can be extended from the collection of
open sets to the collection of all Borel sets as a Radon measure, see Definition~\ref{def:BVgeneral sets}.
The following lemmas are standard: the first  can proved by applying the Leibniz
rule to the approximations $\eta u_k+(1-\eta)v_k$ with $u_k$ and $v_k$ the sequences of functions from $\mathcal{F}$
that approximate $u$ and $v$, and the non-trivial part of the second is a partioning argument.


\begin{lemma}\label{lem:BV-Leibniz}
If $u,v\in BV(X)$ and $\eta$ is a Lipschitz continuous function on $X$ with $0\le \eta\le 1$ on $X$, then
$\eta u+(1-\eta)v\in BV(X)$ with
\[
\Vert D(\eta u+(1-\eta)v)\Vert(X)\le \Vert Du\Vert(X)+\Vert Dv\Vert(X) +\int_X|u-v|\, |\nabla \eta|\, d\mu.
\]
\end{lemma}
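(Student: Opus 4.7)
The plan is to use the natural convex interpolants $w_k:=\eta u_k+(1-\eta)v_k$, where $u_k, v_k$ are locally Lipschitz approximating sequences given by Definition~\ref{def-BV} as witnesses for $u$ and $v$. Since $\eta$ is bounded and $u_k\to u$, $v_k\to v$ in $L^1(X)$, one has $w_k\to \eta u+(1-\eta)v$ in $L^1(X)$; and since $\eta$ is Lipschitz and $u_k,v_k$ are locally Lipschitz, each $w_k$ is locally Lipschitz, hence lies in $\mathcal{C}(X)\cap L^1(X)$ with $\Gamma(w_k,w_k)\ll \mu$ by Lemma~\ref{lem:diff-of-Lip}. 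Fix $\varepsilon>0$ and choose the sequences so that, after passing to subsequences, $\int_X|\nabla u_k|\,d\mu\to\alpha\le \|Du\|(X)+\varepsilon$ and $\int_X|\nabla v_k|\,d\mu\to\beta\le \|Dv\|(X)+\varepsilon$.

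The central step is the pointwise gradient bound
\[
|\nabla w_k|\le \eta|\nabla u_k|+(1-\eta)|\nabla v_k|+|u_k-v_k|\,|\nabla \eta|\quad\mu\text{-a.e.}
\]
To derive it, expand $d\Gamma(w_k,w_k)$ by bilinearity and the Leibniz rule into the terms
\[
\eta^2 d\Gamma(u_k,u_k)+(1-\eta)^2 d\Gamma(v_k,v_k)+2\eta(1-\eta)d\Gamma(u_k,v_k),
\]
together with cross terms that rearrange, using $\tfrac12 d\Gamma(\eta u_k,\eta u_k)$-type Leibniz expansions, into
\[
2\eta(u_k-v_k)d\Gamma(\eta,u_k)+2(1-\eta)(u_k-v_k)d\Gamma(\eta,v_k)+(u_k-v_k)^2d\Gamma(\eta,\eta).
\]
Dividing through by $d\mu$, the resulting expression is the pointwise square-norm of the ``vector'' $\eta\nabla u_k+(1-\eta)\nabla v_k+(u_k-v_k)\nabla \eta$ interpreted via the Cauchy--Schwarz-compatible inner product $\Gamma'(\cdot,\cdot)$. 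The triangle inequality in this fiberwise Hilbert structure (which reduces to $(a+b+c)^2\ge\bigl|\alpha\vec f+\beta\vec g+\gamma\vec h\bigr|^2$ via Cauchy--Schwarz applied to each inner product and the fact that $\eta,1-\eta\ge 0$) yields the displayed bound.

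Integrating the bound gives
\[
\int_X|\nabla w_k|\,d\mu\le \int_X|\nabla u_k|\,d\mu+\int_X|\nabla v_k|\,d\mu+\int_X|u_k-v_k|\,|\nabla \eta|\,d\mu.
\]
Since $\eta$ is Lipschitz, $|\nabla \eta|$ is $\mu$-essentially bounded by $\mathrm{Lip}(\eta)$ (Lemma~\ref{lem:diff-of-Lip}), and $u_k-v_k\to u-v$ in $L^1(X)$, so the last term converges to $\int_X|u-v|\,|\nabla \eta|\,d\mu$. Taking $\liminf_k$ and applying Definition~\ref{def-Du} to the admissible sequence $w_k$,
\[
\|D(\eta u+(1-\eta)v)\|(X)\le \alpha+\beta+\int_X|u-v|\,|\nabla \eta|\,d\mu\le \|Du\|(X)+\|Dv\|(X)+\int_X|u-v|\,|\nabla \eta|\,d\mu+2\varepsilon.
\]
Letting $\varepsilon\to 0$ gives the claim, with $\eta u+(1-\eta)v\in BV(X)$ a byproduct of the finiteness of the right-hand side.

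The main obstacle is the pointwise gradient bound in the middle paragraph: naive attempts to split $w_k=\eta u_k+(1-\eta)v_k$ and apply the product rule to each summand separately produce the term $(|u_k|+|v_k|)|\nabla\eta|$ rather than $|u_k-v_k|\,|\nabla\eta|$. The cancellation that replaces $|u_k|+|v_k|$ with $|u_k-v_k|$ must be extracted from the bilinear Leibniz expansion of $d\Gamma(w_k,w_k)$ as a whole, and relies on the identity $d\Gamma(\eta,1-\eta)=-d\Gamma(\eta,\eta)$ combining the two partition weights into one coefficient of $d\Gamma(\eta,\cdot)$; this is where the convex-combination hypothesis $0\le\eta\le 1$ enters crucially, both to preserve signs in this recombination and to absorb $\eta$ and $1-\eta$ by $1$ in the final integral.
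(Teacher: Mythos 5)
Your proof is correct and matches the paper's own (sketched) approach, which explicitly says the lemma follows "by applying the Leibniz rule to the approximations $\eta u_k+(1-\eta)v_k$"; you have simply filled in the details. The key algebraic step is cleaner than your middle paragraph suggests: writing $w_k=v_k+\eta(u_k-v_k)$ and applying the Leibniz rule directly gives the fiberwise identity $\nabla w_k=\eta\nabla u_k+(1-\eta)\nabla v_k+(u_k-v_k)\nabla\eta$ in one line (using $\nabla(1-\eta)=-\nabla\eta$), after which the Cauchy--Schwarz triangle inequality for $\Gamma$ and the bounds $0\le\eta,1-\eta\le1$ yield the pointwise estimate and hence the claim.
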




\begin{lemma}\label{lem:elementary}
Let $U$ and $V$ be two open subsets of $X$. If $u\in BV(X)$, then
\begin{enumerate}
\item $\Vert Du\Vert(\emptyset)=0$,
\item $\Vert Du\Vert(U)\le \Vert Du\Vert(V)$ if $U\subset V$,
\item $\Vert Du\Vert(\bigcup_i U_i)=\sum_i\Vert Du\Vert(U_i)$ if $\{U_i\}_i$ is a pairwise disjoint subfamily of open
subsets of $X$.
\end{enumerate}
\end{lemma}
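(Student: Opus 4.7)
My plan is to dispatch (1) and (2) immediately and concentrate effort on (3), which is the only substantial part. Item (1) is forced by the convention that the infimum defining $\|Du\|(\emptyset)$ is taken over sequences whose gradient integrals over $\emptyset$ vanish, so $\|Du\|(\emptyset)=0$. For (2), given open $U\subset V$, I would observe that any admissible sequence $\{u_k\}\subset \mathcal{C}(V)$ for $u$ restricts to an admissible sequence $\{u_k|_U\}\subset \mathcal{C}(U)$ for $u$, since $L^1(V)$-convergence implies $L^1(U)$-convergence, and moreover $\int_U |\nabla u_k|\, d\mu \le \int_V |\nabla u_k|\, d\mu$ by positivity of the integrand; taking $\liminf_k$ and then the infimum over such sequences yields $\|Du\|(U)\le \|Du\|(V)$.

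The heart of the matter is (3), which I would split into the two opposite inequalities. For the superadditivity $\|Du\|\bigl(\bigcup_i U_i\bigr)\ge \sum_i \|Du\|(U_i)$, let $\varepsilon>0$ and choose $\{v_k\}\subset \mathcal{C}(\bigcup_i U_i)$ with $v_k\to u$ in $L^1(\bigcup_i U_i)$ and $\liminf_k \int_{\bigcup_i U_i} |\nabla v_k|\, d\mu \le \|Du\|(\bigcup_i U_i)+\varepsilon$. By disjointness, $\int_{\bigcup_i U_i} |\nabla v_k|\, d\mu = \sum_i \int_{U_i} |\nabla v_k|\, d\mu$, and since each restriction $v_k|_{U_i}\in \mathcal{C}(U_i)$ approximates $u$ in $L^1(U_i)$, Fatou's lemma applied to the countable sum in $i$ gives
\[
\|Du\|\bigl(\textstyle\bigcup_i U_i\bigr)+\varepsilon \ge \liminf_k \sum_i \int_{U_i}|\nabla v_k|\, d\mu \ge \sum_i \liminf_k \int_{U_i}|\nabla v_k|\, d\mu \ge \sum_i \|Du\|(U_i),
\]
and letting $\varepsilon\to0$ gives the inequality.

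For the reverse subadditivity I would use a diagonal gluing. Fix $\varepsilon>0$. For each $i$ and $m\in\mathbb{N}$, select $u^{(i)}_m \in \mathcal{C}(U_i)$ with $\|u^{(i)}_m - u\|_{L^1(U_i)}\le \varepsilon\, 2^{-i-m}$ and $\int_{U_i} |\nabla u^{(i)}_m|\, d\mu \le \|Du\|(U_i)+\varepsilon\, 2^{-i-m}$, which is possible from the definition of $\|Du\|(U_i)$. Define $v_m$ on $\bigcup_i U_i$ by $v_m|_{U_i}:= u^{(i)}_m$. Summing in $i$ yields $\|v_m - u\|_{L^1(\bigcup_i U_i)}\le \varepsilon\, 2^{-m}\to 0$ and $\int_{\bigcup_i U_i} |\nabla v_m|\, d\mu \le \sum_i \|Du\|(U_i) + \varepsilon\, 2^{-m}$, so $\{v_m\}$ is admissible in the definition of $\|Du\|(\bigcup_i U_i)$ and delivers $\|Du\|(\bigcup_i U_i)\le \sum_i \|Du\|(U_i)$.

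The one genuine technical step, and the main (essentially only) obstacle, is verifying that the glued function $v_m$ really belongs to $\mathcal{C}(\bigcup_i U_i)$. Since the $U_i$ are disjoint open sets, each point of $\bigcup_i U_i$ has a neighborhood contained in exactly one $U_i$, so $v_m$ inherits continuity and local Lipschitz regularity from the pieces $u^{(i)}_m$. For the required $\mathcal{F}_{\mathrm{loc}}$-membership and absolute continuity $\Gamma(v_m,v_m)\ll \mu$, I would use that any compact $K\subset \bigcup_i U_i$ meets only finitely many $U_i$ and decomposes accordingly into a finite disjoint union of compact pieces; on each piece $v_m$ is Lipschitz, so a McShane extension produces a globally Lipschitz function on $X$ that agrees with $v_m$ on $K$, and Lemma~\ref{lem:diff-of-Lip} then places $v_m$ in $\mathcal{F}_{\mathrm{loc}}(\bigcup_i U_i)$ with $|\nabla v_m|^2 = |\nabla u^{(i)}_m|^2$ on $U_i$ by strict locality, which is exactly what the gradient computation above uses.
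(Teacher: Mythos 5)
Your overall plan---dispatching (1) and (2) immediately, and proving (3) by Fatou for superadditivity and a diagonal gluing for subadditivity---is the standard partitioning argument, and it matches the hint the paper gives (the paper does not write out a proof of this lemma, only remarking that the non-trivial part is a partitioning argument). Items (1), (2), and the Fatou half of (3) are fine as written. There is, however, a gap at exactly the spot you flag as the ``one genuine technical step'': the claim that $v_m$ is locally Lipschitz on each piece, and hence amenable to a McShane extension, does not follow from the hypotheses. The paper defines the core $\mathcal{C}(U)$ as the class of $f\in\mathcal{F}_{\mathrm{loc}}(U)\cap C(U)$ with $\Gamma(f,f)\ll\mu$, which is strictly larger than the locally Lipschitz functions on $U$: absolute continuity of $\Gamma(f,f)$ with respect to $\mu$ does not force the density $|\nabla f|$ to be locally bounded. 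So the approximants $u^{(i)}_m\in\mathcal{C}(U_i)$ are only continuous, and the McShane step breaks down.

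The repair is to verify $v_m\in\mathcal{C}\bigl(\bigcup_i U_i\bigr)$ directly by gluing local $\mathcal{F}$-representatives with cut-offs rather than by extending a Lipschitz function. Given a compact $K\subset\bigcup_i U_i$, write $K=\bigcup_{i=1}^N K_i$ with $K_i=K\cap U_i$ compact and pairwise disjoint (only finitely many $U_i$ meet $K$, as you observe). For each $i\le N$ choose $w_i\in\mathcal{F}$ agreeing with $u^{(i)}_m$ a.e.\ on a compact neighborhood $\tilde K_i\subset U_i$ of $K_i$, which is possible since $u^{(i)}_m\in\mathcal{F}_{\mathrm{loc}}(U_i)$, and choose compactly supported Lipschitz cut-offs $\eta_i$ with $\eta_i=1$ on $K_i$, $0\le\eta_i\le 1$, and $\mathrm{supp}\,\eta_i$ contained in the interior of $\tilde K_i$. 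Then $w:=\sum_{i=1}^N\eta_i w_i\in\mathcal{F}$ and $w=v_m$ a.e.\ on $K$, so $v_m\in\mathcal{F}_{\mathrm{loc}}\bigl(\bigcup_i U_i\bigr)$. Continuity of $v_m$ is clear because each point of $\bigcup_i U_i$ has a neighborhood contained in a single $U_i$, and $\Gamma(v_m,v_m)\ll\mu$ on $\bigcup_i U_i$ follows by strict locality, since $\Gamma(v_m,v_m)$ agrees with $\Gamma(u^{(i)}_m,u^{(i)}_m)$ on $U_i$. With this replacement your diagonal gluing argument goes through as written.
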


We use the above definition of $\Vert Du\Vert$ on open sets in a Caratheodory construction.
\begin{definition}\label{def:BVgeneral sets}
For $A\subset X$, we set
\[
\Vert Du\Vert^*(A):=\inf\{\Vert Du\Vert(O)\, :\, O\text{ is an open subset of }X, A\subset O\}.
\]
\end{definition}

By Lemma~\ref{lem:elementary} $\Vert Du\Vert^*(A)=\Vert Du\Vert(A)$ when $A$ is open; abusing notation we re-name $\Vert Du\Vert^*(A)$ as $\Vert Du\Vert(A)$ for general $A$.

The main result of this section is that $\|Du\|$, as constructed above, is a Radon measure on $X$.
The proof may be directly adapted from that for~\cite[Theorem~3.4]{Mr}, and relies on a lemma of De Giorgi and Letta~\cite[Theorem~5.1]{DeLe}, see also~\cite[Theorem~1.53]{AFP}. 
%
%


\begin{theorem}\label{thm:outer-measure}
If 
$f\in BV(X)$, then $\Vert Df\Vert$ is a Radon outer measure on $X$ and
the restriction of $\Vert Df\Vert$ to the Borel sigma algebra is a Radon measure which is the weak limit of $\Vert Du_k\Vert$
for some sequence  $u_k$ of locally Lipschitz functions in $L^1(X)$ such that $u_k\to f$ in $L^1(X)$. 
\end{theorem}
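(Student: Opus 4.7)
The plan is to adapt Miranda's strategy by applying the De Giorgi–Letta criterion to the open-set function $U \mapsto \Vert Df\Vert(U)$ defined in Definition \ref{def-Du}. Lemma \ref{lem:elementary} already supplies: (i) $\Vert Df\Vert(\emptyset)=0$, (ii) monotonicity on open sets, and (iii) countable additivity on pairwise disjoint open sets. To invoke De Giorgi–Letta one needs in addition finite subadditivity on open unions, namely $\Vert Df\Vert(U \cup V) \le \Vert Df\Vert(U) + \Vert Df\Vert(V)$ for all open $U, V \subset X$.

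To establish that finite subadditivity, I would fix $\eps > 0$, choose locally Lipschitz sequences $u_k \in \mathcal{C}(U)$ and $v_k \in \mathcal{C}(V)$ that converge to $f$ in $L^1$ on the respective open sets and realize the defining infima within $\eps$, and glue them using a Lipschitz cutoff $\eta$ valued in $[0,1]$ equal to $1$ near $U \setminus V$ and $0$ near $V \setminus U$. Such a cutoff is supplied, via Lemma \ref{lem:diff-of-Lip}, by the intrinsic distance. Using the Leibniz-type bound from Lemma \ref{lem:BV-Leibniz},
\[
\int_{U\cup V} |\nabla(\eta u_k + (1-\eta) v_k)|\, d\mu \le \int_U |\nabla u_k|\, d\mu + \int_V |\nabla v_k|\, d\mu + \int_{U\cap V} |u_k - v_k|\, |\nabla \eta|\, d\mu.
\]
A scale-and-diagonal argument, taking $\eta$ through a sequence of cutoffs with shrinking transition regions and $k \to \infty$ along a diagonal, forces the overlap cost term to vanish because $u_k - v_k \to 0$ in $L^1(U \cap V)$. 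Together with $\limsup_k \int |\nabla w_k| \le \Vert Df\Vert(U) + \Vert Df\Vert(V) + 2\eps$ for the glued sequence $w_k = \eta u_k + (1-\eta) v_k$, this yields subadditivity. De Giorgi–Letta then extends $\Vert Df\Vert$ to a Borel regular outer measure agreeing with Definition \ref{def-Du} on open sets. Finiteness $\Vert Df\Vert(X) < \infty$ from Definition \ref{def-BV}, combined with outer regularity on Borel sets and inner regularity on open sets in the locally compact separable metric space $(X, d_{\mathcal E})$, gives the Radon property.

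For the weak convergence statement, the idea is to exploit separability of $X$ to pick a countable basis $\{O_j\}_{j\in \bN}$ of open sets closed under finite unions, and by a diagonal extraction produce a single sequence $\{u_k\}$ of locally Lipschitz $L^1$-approximations to $f$ such that $\limsup_k \int_{O_j} |\nabla u_k|\, d\mu \le \Vert Df\Vert(O_j)$ for every $j$ and such that also $\int_X |\nabla u_k|\, d\mu \to \Vert Df\Vert(X)$. By lower semicontinuity (which is built into the infimum in Definition \ref{def-Du}) the reverse inequality for the $\liminf$ on each $O_j$ holds automatically, and the Portmanteau theorem for finite Radon measures then upgrades these facts to weak convergence of $|\nabla u_k|\, d\mu$ to $\Vert Df\Vert$.

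The principal obstacle is the subadditivity step, because the cost integral $\int |u_k - v_k||\nabla \eta|\, d\mu$ couples the $L^1$-convergence of the approximants with the Lipschitz constant of $\eta$, which necessarily blows up as the transition zone is made narrower. The resolution is to enlarge $U$ and $V$ by a small amount $\delta$, use $L^1$-convergence of the $u_k$ and $v_k$ on the slightly enlarged sets together with monotonicity from Lemma \ref{lem:elementary}(2), and then pass $\delta \to 0$ along a diagonal in $k$. Executing this argument cleanly in the Dirichlet-metric setting is where the technical care is needed, but all required cutoffs and gradient bounds are furnished by Lemma \ref{lem:diff-of-Lip} and the discussion preceding it.
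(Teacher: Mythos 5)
Your approach matches the paper's: the paper proves Theorem~\ref{thm:outer-measure} simply by citing Miranda's Theorem~3.4 and the De Giorgi--Letta criterion, and your proposal is precisely that adaptation, with the gluing by a Lipschitz cutoff, the use of Lemma~\ref{lem:BV-Leibniz}, and the Caratheodory extension from Definition~\ref{def:BVgeneral sets} all correctly identified.

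There is, however, one genuine gap. The De Giorgi--Letta criterion (e.g.~in the form of Theorem~1.53 of Ambrosio--Fusco--Pallara, which the paper also cites) requires, beyond the properties from Lemma~\ref{lem:elementary} and finite subadditivity, the \emph{inner regularity} condition $\Vert Df\Vert(A)=\sup\{\Vert Df\Vert(A'): A'\text{ open},\ \overline{A'}\subset A\ \text{compact}\}$. You omit this. It is not cosmetic: without it the Caratheodory extension $\Vert Df\Vert^*$ need not be countably subadditive, because for a countable open union $A=\bigcup_n A_n$ one only obtains $\Vert Df\Vert(A_1\cup\cdots\cup A_n)\le\sum_{j\le n}\Vert Df\Vert(A_j)$, and passing to the full union $A$ requires precisely inner approximation from within. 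Relatedly, your account of the subadditivity step is slightly askew. One does not shrink the transition zone (and so the Lipschitz constant of $\eta$ need not blow up); rather, one fixes a single cutoff $\eta$ that is $1$ on a compactly contained $A'\subset\subset A$ and $0$ outside $A$, and for fixed $\eta$ the overlap term $\int|u_k-v_k|\,|\nabla\eta|\,d\mu\to0$ as $k\to\infty$. But this argument only yields $\Vert Df\Vert(A'\cup B)\le\Vert Df\Vert(A)+\Vert Df\Vert(B)$ for $A'\subset\subset A$, because the cutoff needs positive-width room between $A'$ and $\partial A$. Upgrading this to $\Vert Df\Vert(A\cup B)\le\Vert Df\Vert(A)+\Vert Df\Vert(B)$ is, again, the inner regularity step --- which must be established separately (typically by a similar gluing argument along an exhaustion). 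Once that is filled in, the rest of your proposal, including the diagonal extraction and Portmanteau step for the weak convergence claim, is sound.
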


\begin{Example}
There is a large class of fractal examples 
\cite{T08,KigB,Ki93,Kus89} 
with resistance forms $\mathcal E$, 
a so-called Kusuoka measure $\mu$, 
and  a base of open sets $O$ with finite boundaries, 
such that $ 1_O\in BV(X)$ and $\| D1_O\|$ 
is absolutely continuous 
with respect to the counting measure on $\partial O$. 
Among these examples, the most notable are 
the Sierpinski gasket in harmonic coordinates  \cite{Ka12r,Ka13r,Kig08,T04,MST,FST,Koskela-Zhou} and
fractal quantum graphs \cite{ARKT}.
On the Sierpinski gasket \cite[Proposition 4.14]{T04} shows how to make computations at the dense set of junction points. 
One might expect that if $u\in BV(X)$ then, following \cite{HRT,HT13}, 
$Du$ could be defined as a vector valued Borel measure, however the details of this construction are outside of the scope of this article. 
The long term motivation for this type of analysis comes from stochastic PDEs, see  
\cite{BarbuRoeckner,Khoshnevisan1,Khoshnevisan2,MunteanuRoeckner,HZ12} and the references therein. 
\end{Example}


\subsection{Co-area formula}

We give a co-area formula that connects the BV energy seminorm of a BV function
with the perimeter measure of its super-level sets.

\begin{definition}
A function $u$ is said to be in $BV_{\mathrm{loc}}(X)$ if for each bounded open set $U\subset X$ there is a compactly supported
Lipschitz function $\eta_U$ on $X$ such that $\eta_U=1$ on $U$ and $\eta_U\, u\in BV(X)$.
We say that a measurable set $E\subset X$ is of \emph{finite perimeter} if $\mathbf{1}_E\in  BV_{\mathrm{loc}}(X)$ 
with $\Vert D\mathbf{1}_E\Vert(X)<\infty$.  For any Borel set $A\subset X$, we denote by 
$P(E,A):=\Vert D\mathbf{1}_E\Vert(A)$ the perimeter measure of $E$.
\end{definition}

The proof of the following theorem is a direct adaptation of the corresponding result for the BV theory 
found in~\cite[Proposition~4.2]{Mr}. 
\begin{theorem}\label{lem:Co-area}
The co-area formula holds true, that is, for Borel sets $A\subset X$ and $u\in L^1_{\mathrm{loc}}(X)$,
\[
\Vert Du\Vert(A)=\int_{\R}P(\{u>s\}, A)\, ds.
\]
\end{theorem}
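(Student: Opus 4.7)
My plan is to prove the formula first for open sets $A = U$ and then extend to Borel sets: both sides are Radon measures in $A$ (the LHS by Theorem~\ref{thm:outer-measure}, and the RHS by Fubini together with Theorem~\ref{thm:outer-measure} applied to the indicators $\mathbf{1}_{\{u>s\}}$, once measurability of $s \mapsto P(\{u>s\},\cdot)$ is checked from the lower semicontinuity built into Definition~\ref{def-Du}), so equality on open sets propagates to all Borel sets by outer regularity. The preliminary step is to establish the co-area formula for locally Lipschitz $u$,
\begin{equation*}
\int_U |\nabla u|\, d\mu = \int_\R P(\{u>s\}, U)\, ds,
\end{equation*}
which I would derive by applying the chain rule for $\Gamma$ to truncations of the form $\eta^{-1}((u-s)_+ \wedge \eta)$, letting $\eta \to 0^+$ to identify each level-set perimeter with the part of $\Gamma(u,u)^{1/2}$ concentrated on $\{u = s\}$, and then using the layer-cake decomposition together with Fubini.

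For the upper bound $\int_\R P(\{u>s\}, U)\, ds \leq \|Du\|(U)$, I would take locally Lipschitz $u_k \to u$ in $L^1(U)$ with $\int_U |\nabla u_k|\, d\mu \to \|Du\|(U)$. After passing to an a.e.-convergent subsequence, $\mathbf{1}_{\{u_k > s\}} \to \mathbf{1}_{\{u > s\}}$ in $L^1_{\mathrm{loc}}(U)$ for a.e.\ $s \in \R$. The lower semicontinuity of $\|D\cdot\|(U)$ coming from Definition~\ref{def-Du}, applied to the indicators, gives $P(\{u>s\}, U) \leq \liminf_k P(\{u_k>s\}, U)$ for a.e.\ $s$, and Fatou combined with the Lipschitz co-area identity of the preceding paragraph closes this direction.

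For the reverse inequality, I would use the layer-cake reconstruction $u = \int_0^\infty \mathbf{1}_{\{u>s\}}\, ds - \int_{-\infty}^0 \mathbf{1}_{\{u \leq s\}}\, ds$. Assuming the right-hand side is finite, for each $s$ in a conull set select locally Lipschitz $v_k^s$ approximating $\mathbf{1}_{\{u>s\}}$ in $L^1(U)$ with $\int_U |\nabla v_k^s|\, d\mu \to P(\{u>s\}, U)$, depending measurably on $s$. Setting $u_k(x) := \int_{-N_k}^{N_k} v_k^s(x)\, ds$ with $N_k \to \infty$ chosen to preserve $L^1$-convergence to $u$, the sublinearity of $|\nabla\cdot|$ and Fubini give $\int_U |\nabla u_k|\, d\mu \leq \int_\R \int_U |\nabla v_k^s|\, d\mu\, ds$, so that Definition~\ref{def-Du} and Fatou deliver $\|Du\|(U) \leq \int_\R P(\{u>s\}, U)\, ds$. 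The main obstacle is the measurable-in-$s$ selection of Lipschitz approximants of the level sets together with the verification that the layered integral $u_k$ is itself admissible in Definition~\ref{def-Du} with the claimed gradient estimate; this technicality is where the adaptation of Miranda's argument in~\cite{Mr} to the Dirichlet-form setting must be carried out carefully, leveraging Assumption~\ref{ass:I} and the chain rule for $\Gamma$ recorded in Section~\ref{sec:prelim}.
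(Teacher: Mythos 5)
The paper proves this theorem only by citing Miranda's Proposition~4.2 in~\cite{Mr} and asserting that its proof adapts directly, so the comparison is really with Miranda's argument. Your plan for the direction $\int_\R P(\{u>s\},U)\,ds\le\|Du\|(U)$ is essentially Miranda's: the Lipschitz truncations $\eta^{-1}((u-s)_+\wedge\eta)$ give $P(\{u>s\},U)\le\liminf_{\eta\to0^+}\eta^{-1}\int_{U\cap\{s<u<s+\eta\}}|\nabla u|\,d\mu$, Fubini/Fatou gives $\int_\R P(\{u>s\},U)\,ds\le\int_U|\nabla u|\,d\mu$ for locally Lipschitz $u$, and then the general case follows from $L^1_{\mathrm{loc}}$-lower-semicontinuity of $P(\cdot,U)$ together with Fatou. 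Note, however, that you only need that one-sided inequality for Lipschitz $u$; claiming equality as a ``preliminary step'' is both unnecessary and circular, since the reverse inequality for Lipschitz $u$ is already an instance of the theorem you are trying to prove. The heuristic you offer for it --- identifying $P(\{u>s\},U)$ with ``the part of $\Gamma(u,u)^{1/2}$ concentrated on $\{u=s\}$'' --- does not survive scrutiny: for locally Lipschitz $u$ one has $\Gamma(u,u)\ll\mu$, so $\Gamma(u,u)$ assigns zero mass to $\{u=s\}$ for all but countably many $s$; the perimeter $P(\{u>s\},U)$ is a singular ``surface'' object and is not a restriction of $\Gamma(u,u)^{1/2}$.

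The reverse inequality $\|Du\|(U)\le\int_\R P(\{u>s\},U)\,ds$ is where your proposal departs from Miranda and where the real gaps lie. You propose to choose, measurably in $s$, locally Lipschitz $v_k^s\to\mathbf{1}_{\{u>s\}}$ with $\int_U|\nabla v_k^s|\,d\mu\to P(\{u>s\},U)$, and then set $u_k=\int_{-N_k}^{N_k}v_k^s\,ds$. This has several unresolved problems: (i) there is no obvious measurable selection of optimizing sequences from Definition~\ref{def-Du}, which is an inf of a liminf over an unstructured set of approximating sequences; (ii) even granting measurability, the integral $u_k$ of a family of locally Lipschitz functions need not be locally Lipschitz nor lie in $\mathcal{F}_{\mathrm{loc}}(U)$, so it is not an admissible competitor in Definition~\ref{def-Du}; (iii) the Minkowski-type bound $\int_U|\nabla u_k|\,d\mu\le\int_\R\int_U|\nabla v_k^s|\,d\mu\,ds$ is asserted but would itself require a nontrivial integral inequality for the carr\'e du champ; and (iv) there is no mechanism forcing $u_k\to u$ in $L^1(U)$, since the convergence $v_k^s\to\mathbf{1}_{\{u>s\}}$ is only for each fixed $s$, not uniformly in $s$. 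Miranda's argument avoids all of this: write $u\ge0$ (after splitting) as the Cavalieri integral $u=\int_0^\infty\mathbf{1}_{\{u>t\}}\,dt$, approximate by \emph{finite} Riemann sums $u^{(n)}=\sum_j(t^{(n)}_{j+1}-t^{(n)}_j)\mathbf{1}_{\{u>t^{(n)}_j\}}$, observe $u^{(n)}\to u$ in $L^1_{\mathrm{loc}}(U)$, use the finite subadditivity of $\|D\cdot\|(U)$ (immediate from Definition~\ref{def-Du}) to bound $\|Du^{(n)}\|(U)$ by a Riemann sum for $\int P(\{u>t\},U)\,dt$, and conclude by the $L^1_{\mathrm{loc}}$-lower semicontinuity of $\|D\cdot\|(U)$ (a diagonal argument from Definition~\ref{def-Du}). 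Because each $u^{(n)}$ is a finite linear combination of indicators, no measurable selection, no continuum Minkowski inequality, and no Lipschitz regularity of $u^{(n)}$ is needed. You should replace your construction with this Riemann-sum argument. The extension from open $U$ to Borel $A$ via Theorem~\ref{thm:outer-measure}, outer regularity, and measurability of $s\mapsto P(\{u>s\},\cdot)$ is fine.
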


\section{BV, Sobolev and  heat semigroup-based Besov classes }\label{weak BE metric space}

%
Throughout the section, let $(X,\mu,\mathcal{E}, \mathcal{F}, d_\mathcal{E})$ be a strictly local regular Dirichlet space that satisfies the general assumptions of Section \ref{sec:prelim} and the 2-Poincar\'e inequality. We stress that the 1-Poincar\'e inequality is not assumed.

\subsection{Heat semigroup-based Besov classes}

We  first turn our attention to the study of Besov classes. In~\cite{ABCRST1} the following heat semigroup-based Besov classes were introduced.

\begin{definition}[\cite{ABCRST1}]\label{def:Besov}
Let $p \ge 1$ and $\alpha \ge 0$. For $f \in L^p(X)$, we define the Besov  seminorm:
\[
\| f \|_{p,\alpha}= \sup_{t >0} t^{-\alpha} \left( \int_X \int_X p_t(x,y) | f(x)-f(y)|^p d\mu(x)d\mu(y) \right)^{1/p},
\]
and the Besov  spaces
\begin{align}\label{eq:def:Besov}
\mathbf{B}^{p,\alpha}(X)=\{ f \in L^p(X)\, :\,  \| f \|_{p,\alpha} <+\infty \}.
\end{align}
\end{definition}
The norm on $\mathbf{B}^{p,\alpha}(X)$ is defined as:
\[
\| f \|_{\mathbf{B}^{p,\alpha}(X)} =\| f \|_{L^p(X)} + \| f \|_{p,\alpha}.
\]
It is proved in Proposition 4.14 and Corollary 4.16 of~\cite{ABCRST1}
that $\mathbf{B}^{p,\alpha}(X)$ is a Banach space for $p\ge 1$ and that it is reflexive for $p>1$.
 In this section, we compare the spaces $\mathbf{B}^{p,\alpha}(X)$ to more classical notions of 
 Besov classes that have previously been considered in the metric setting. 
 
 We recall the following definition from~\cite{GKS}. For $0\le\alpha<\infty$, $1\le p<\infty$
and $p<q\le \infty$, let $B^\alpha_{p,q}(X)$ be the collection of functions
$u\in L^p(X)$ for which, if $q<\infty$
\begin{equation}\label{eq:BesovMetric-q<infty}
\Vert u\Vert_{B^\alpha_{p,q}(X)}:=\left(\int_0^\infty
\left(\int_X\int_{B(x,t)}\frac{|u(y)-u(x)|^p}{t^{\alpha p}\mu(B(x,t))}\, d\mu(y)\, d\mu(x)
\right)^{q/p}\, \frac{dt}{t}\right)^{1/q} <\infty
\end{equation}
and in the case $q=\infty$
\begin{equation}\label{eq:BesovMetric-q=infty}
\Vert u\Vert_{B^\alpha_{p,\infty}(X)}:=\sup_{t>0}
\left(\int_X\int_{B(x,t)}\frac{|u(y)-u(x)|^p}{t^{\alpha p}\mu(B(x,t))}\, d\mu(y)\, d\mu(x)
\right)^{1/p}<\infty.
\end{equation}

\begin{proposition}\label{prop:B=B}
For $1\le p<\infty$ and $0< \alpha<\infty$ we have
\[
\mathbf{B}^{p,\alpha/2}(X)=B^{\alpha}_{p,\infty}(X),
\]
with equivalent seminorms.
\end{proposition}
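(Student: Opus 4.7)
The plan is to exploit the two-sided Gaussian bounds on the heat kernel from \eqref{eq:heat-Gauss2}, which hold under our standing assumptions together with the $2$-Poincar\'e inequality. After the substitution $t = r^2$, the heat semigroup seminorm becomes
\[
\|f\|_{p,\alpha/2}^p = \sup_{r>0} r^{-\alpha p}\int_X\int_X p_{r^2}(x,y)\,|f(x)-f(y)|^p\,d\mu(y)\,d\mu(x),
\]
so that the parameter $r$ in $\mathbf{B}^{p,\alpha/2}$ plays the same scaling role as $t$ in $B^{\alpha}_{p,\infty}$. Comparison of the two seminorms then reduces to comparing $p_{r^2}(x,y)$ with $\mathbf{1}_{B(x,r)}(y)/\mu(B(x,r))$, up to the Gaussian factor $e^{-cd(x,y)^2/r^2}$.

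For the easy direction, $\|\,\cdot\,\|_{B^\alpha_{p,\infty}} \lesssim \|\,\cdot\,\|_{p,\alpha/2}$, I would use the Gaussian lower bound: for $y\in B(x,r)$ the exponential factor is bounded below by $e^{-c_1}$, so $p_{r^2}(x,y) \ge c/\mu(B(x,r))$. Restricting the double integral in $\|f\|_{p,\alpha/2}^p$ to the set $\{(x,y) : d(x,y) < r\}$ and using this lower bound immediately yields, after taking the supremum over $r>0$, that $\|f\|_{B^\alpha_{p,\infty}} \le C\|f\|_{p,\alpha/2}$.

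For the converse direction, I would use the Gaussian upper bound and decompose the $y$-integration into the central ball $B(x,r)$ and the dyadic annuli $A_k(x) = B(x,2^{k+1}r)\setminus B(x,2^k r)$ for $k\ge 0$. On $A_k(x)$ the upper Gaussian bound gives $p_{r^2}(x,y)\le C e^{-c 4^k}/\mu(B(x,r))$, and the doubling property \eqref{eq:mass-bounds} lets us replace $\mu(B(x,r))$ by $\mu(B(x,2^{k+1}r))$ at the cost of a factor $C 2^{(k+1)Q}$ and similarly rescale $r^{-\alpha p}$ to $(2^{k+1}r)^{-\alpha p}$ at the cost of $2^{(k+1)\alpha p}$. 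Thus the contribution of $A_k(x)$ is bounded by
\[
C\,e^{-c 4^k}\,2^{(k+1)(Q+\alpha p)}\,(2^{k+1}r)^{-\alpha p}\!\int_X\!\int_{B(x,2^{k+1}r)}\!\!\frac{|f(x)-f(y)|^p}{\mu(B(x,2^{k+1}r))}\,d\mu(y)\,d\mu(x),
\]
each factor of which is $\le \|f\|_{B^\alpha_{p,\infty}}^p$. Summing the series in $k$ (which converges because the exponential decay $e^{-c4^k}$ dominates any polynomial in $2^k$) and handling the $y\in B(x,r)$ piece similarly, I take the supremum over $r$ to obtain $\|f\|_{p,\alpha/2}\le C\|f\|_{B^\alpha_{p,\infty}}$.

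The main technical obstacle is the upper bound: the annular decomposition requires carefully tracking how the volume doubling constants and the rescaling of $r$ interact with the Gaussian tail, in order to see that the series in $k$ is summable. Once the scaling relation $t\leftrightarrow r^2$ is fixed and the annuli are set up, the rest is routine estimation using \eqref{eq:heat-Gauss2} and \eqref{eq:mass-bounds}; no further hypothesis beyond doubling and the heat kernel bounds (which the $2$-Poincar\'e inequality already guarantees) is needed.
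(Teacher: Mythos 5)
Your proposal is correct and follows essentially the same route as the paper's own proof: the lower bound comes from the Gaussian lower bound restricted to $B(x,\sqrt t)$, and the upper bound comes from a dyadic annular decomposition combined with \eqref{eq:mass-bounds} and the Gaussian tail. The only cosmetic difference is that the paper sums over annuli $B(x,2^i\sqrt t)\setminus B(x,2^{i-1}\sqrt t)$ for all $i\in\mathbb{Z}$ (using $\max\{1,2^{iQ}\}$ for the volume ratio and observing that $\sum_{i\le 0}2^{i\alpha p}<\infty$ handles the small scales), whereas you separate out the central ball $B(x,r)$ and sum only over $k\ge 0$; both are equivalent bookkeeping.
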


\begin{proof}
Since $\mu$ is doubling and supports a $2$-Poincar\'e inequality, we have the
Gaussian double bounds \eqref{eq:heat-Gauss2} for $p_t(x,y)$. Hence if $u\in \mathbf{B}^{p,\alpha}(X)$, we then
must have
\begin{align*}
\Vert u\Vert_{p,\alpha/2}^p
&\ge C^{-1}\sup_{t>0}\int_X\int_X\frac{|u(y)-u(x)|^p}{t^{\alpha p/2}}\,
 \frac{e^{-c\, d(x,y)^2/t}}{\mu(B(x,\sqrt{t}))}\, d\mu(y)\, d\mu(x)\\
&\ge C^{-1}\sup_{\sqrt{t}>0}\int_X \int_{B(x,\sqrt{t})}
 \frac{|u(y)-u(x)|^p}{t^{\alpha p/2}}\,
 \frac{e^{-c\, d(x,y)^2/t}}{\mu(B(x,\sqrt{t}))}\, d\mu(y)\, d\mu(x)\\
&\ge C^{-1} \sup_{\sqrt{t}>0}\int_X \int_{B(x,\sqrt{t})}
 \frac{|u(y)-u(x)|^p}{t^{\alpha p/2}\, \mu(B(x,\sqrt{t}))}
 \, d\mu(y)\, d\mu(x)\\
&=C^{-1}\Vert u\Vert_{B^\alpha_{p,\infty}(X)}^p,
\end{align*}
and from this it follows that $\mathbf{B}^{p,\alpha/2}(X)$ embeds boundedly into
$B^\alpha_{p,\infty}(X)$.

Now we focus on proving the converse embedding. From \eqref{eq:mass-bounds} and \eqref{eq:heat-Gauss2}, we have
\begin{align*}
&\frac{1}{t^{\alpha p/2}}\int_X\int_X|u(y)-u(x)|^p\, p_t(x,y)\, d\mu(y)\, d\mu(x)\\
&\le \frac{C}{t^{\alpha p/2}}\int_X\sum_{i=-\infty}^\infty\int_{B(x,2^i\sqrt{t})\setminus B(x,2^{i-1}\sqrt{t})}
 \frac{|u(y)-u(x)|^p\, e^{-c4^i}}{\mu(B(x, \sqrt{t}))}\, d\mu(y)\, d\mu(x)\\
 &\le \frac{C}{t^{\alpha p/2}}\int_X\sum_{i=-\infty}^\infty\int_{B(x,2^i\sqrt{t})}
 \frac{|u(y)-u(x)|^p\, e^{-c4^i}}{\mu(B(x,2^i\sqrt{t}))}\, \frac{\mu(B(x,2^i\sqrt{t}))}{\mu(B(x,\sqrt{t}))}
 \, d\mu(y)\, d\mu(x)\\
 &\le \frac{C}{t^{\alpha p/2}}\sum_{i=-\infty}^\infty e^{-c 4^i}\,\max\{1,2^{iQ}\}\, (2^i\sqrt{t})^{\alpha p}\,
 \int_X\int_{B(x,2^i\sqrt{t})}\frac{|u(y)-u(x)|^p}{(2^i\sqrt{t})^{\alpha p}\, \mu(B(x,2^i\sqrt{t}))}\, d\mu(y)\, d\mu(x)\\
 &\le C\, \Vert u\Vert_{B^\alpha_{p,\infty}(X)}^p\, \sum_{i=-\infty}^\infty e^{-c 4^i}\, 2^{i\alpha p}\, \max\{1,2^{iQ}\}.
\end{align*}
Since 
\[
\sum_{i=-\infty}^\infty e^{-c 4^i}\, 2^{i\alpha p}\, \max\{1,2^{iQ}\}\le 
\sum_{i\in\mathbb{N}}e^{-c 4^i}\,  2^{i(\alpha p+Q)}\, +\, \sum_{i=0}^\infty 2^{-i\alpha p}<\infty,
\]
the desired bound follows.
\end{proof}

\subsection{Under the weak Bakry-\'Emery condition, $\mathbf{B}^{1,1/2}(X)=BV(X)$}

Recall from Definition~\ref{def:local-F} that $u\in\mathcal{F}_{\mathrm{loc}}(X)$ if
for each ball $B$ in $X$ there is a compactly supported Lipschitz function $\varphi$ with $\varphi=1$ on $B$
such that $u\varphi\in\mathcal{F}$; in this case we can set $|\nabla u|=|\nabla (u\varphi)|$ in $B$, thanks to  the strict locality 
property  of $\mathcal{E}$. 

\begin{lemma}\label{lem:L1-norm-control}
Suppose that the weak Bakry-\'Emery condition~\eqref{eq:weak-BE} holds.
Then for $u\in \mathcal{F}\cap W^{1,1}(X)$, we have that
\[
\Vert P_t u-u\Vert_{L^1(X)}\le C\, \sqrt{t}\, \int_X|\nabla u|\, d\mu.
\]
Hence, if $u\in BV(X)$, then
\[
\Vert P_t u-u\Vert_{L^1(X)}\le C\, \sqrt{t}\, \Vert Du\Vert(X).
\]
\end{lemma}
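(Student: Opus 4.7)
The plan is a duality argument driven by the weak Bakry-\'Emery estimate. Fix a test function $\phi\in\mathcal{F}\cap C_c(X)$ with $\|\phi\|_{L^\infty(X)}\le 1$. By self-adjointness of the semigroup and the identity $P_t\phi-\phi=\int_0^t LP_s\phi\,ds$ in $L^2(X)$ (valid for $\phi\in\mathcal{F}$, since $P_s\phi\in\mathbf{Dom}(L)$ for every $s>0$), I would write
\[
\int_X (P_tu-u)\,\phi\,d\mu =\int_X u\,(P_t\phi-\phi)\,d\mu =-\int_0^t \mathcal{E}(u,P_s\phi)\,ds,
\]
the last step using $u\in\mathcal{F}$. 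Because $u\in W^{1,1}(X)$ gives $d\Gamma(u,u)=|\nabla u|^2\,d\mu$, Cauchy-Schwarz for energy measures together with~\eqref{eq:weak-BE} applied to $\phi$ produces
\[
|\mathcal{E}(u,P_s\phi)|\le\int_X|\nabla u|\,|\nabla P_s\phi|\,d\mu\le \frac{C\|\phi\|_{L^\infty}}{\sqrt{s}}\int_X|\nabla u|\,d\mu.
\]
Integrating the $s^{-1/2}$ singularity over $[0,t]$ yields $2\sqrt t$, and taking the supremum over such $\phi$---which forms a norming set for $L^1(X)$ by density of $\mathcal{F}\cap C_c(X)$ in $C_c(X)$ (a consequence of regularity of $\mathcal{E}$) and of $C_c(X)$ in the predual of $L^\infty$---gives the first inequality.

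For $u\in BV(X)$, I would pass to a limit. Given $\epsilon>0$, pick by Definition~\ref{def-Du} a sequence of locally Lipschitz $u_k\in L^1(X)$ with $u_k\to u$ in $L^1(X)$ and $\liminf_k\int_X|\nabla u_k|\,d\mu\le\|Du\|(X)+\epsilon$. After multiplication by a Lipschitz cutoff supported in a ball whose radius grows with $k$ (controlled using Lemma~\ref{lem:BV-Leibniz} so that the extra contribution to $\int|\nabla(\cdot)|\,d\mu$ vanishes in the limit), one may assume each modified function lies in $\mathcal{F}\cap W^{1,1}(X)$. The first inequality then gives $\|P_tu_k-u_k\|_{L^1}\le C\sqrt{t}\int_X|\nabla u_k|\,d\mu$. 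Since $P_t$ extends from $L^2$ to an $L^1$-contraction (as a symmetric conservative sub-Markov semigroup), $P_tu_k\to P_tu$ in $L^1(X)$, so the left-hand side converges to $\|P_tu-u\|_{L^1}$; taking $\liminf_k$ and then $\epsilon\to 0$ concludes.

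The only real obstacles are bookkeeping. The semigroup calculus identities are standard for regular Dirichlet forms, and the Cauchy-Schwarz bound for $\Gamma$ is a textbook fact. The most delicate point is ensuring the cutoff-and-approximation step in the $BV$ case produces functions in $\mathcal{F}$ without spoiling the limiting gradient bound; this is routine but must be verified using the Leibniz rule for $\Gamma$.
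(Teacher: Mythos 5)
Your proposal is correct and follows essentially the same route as the paper: a duality argument that moves the semigroup action onto a bounded test function $\varphi\in\mathcal{F}\cap C_c(X)$, applies the weak Bakry-\'Emery bound to $|\nabla P_s\varphi|$, and integrates the resulting $s^{-1/2}$ singularity, with the $BV$ case handled by approximating with a suitable sequence and using the $L^1$-contractivity of $P_t$. Your explicit cutoff step to get approximants into $\mathcal{F}\cap W^{1,1}(X)$ is a point the paper leaves implicit, and is a worthwhile addition.
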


\begin{proof}
To see the first part of the claim, we note that for each $x\in X$ and $s>0$, $\tfrac{\partial}{\partial s}P_su(x)$ exists,
and so by the fundamental theorem of calculus, for $0<\tau<t$ and $x\in X$,
\[
P_tu(x)-P_\tau u(x)=\int_\tau^t \frac{\partial}{\partial s}P_su(x)\, ds.
\]
Thus for each compactly supported function $\varphi\in \mathcal{F} \cap L^\infty(X)$, 
by the facts that $P_t u$ satisfies the heat equation and that $P_s$ 
is a symmetric operator for each $s>0$,
\begin{align*}
\bigg\lvert\int_X\varphi(x)[P_tu(x)-P_\tau u(x)]\, d\mu(x)\bigg\rvert
 &=\bigg\lvert-\int_X\int_\tau^t\varphi(x)\frac{\partial}{\partial s}P_su(x)\, ds\, d\mu(x)\bigg\rvert\\
 &=\bigg\lvert\int_\tau^t \int_X d\Gamma(\varphi,P_su)(x)\, ds\bigg\rvert\\
 &=\bigg\lvert\int_\tau^t \int_X d\Gamma(P_s \varphi,u)(x)\, ds\bigg\rvert\\
 &\le \int_\tau^t\int_X|\nabla P_s\varphi|\, |\nabla u|\, d\mu\, ds\\
 &\le \int_\tau^t  \Vert|\nabla P_s\varphi|\Vert_{L^\infty(X)}\,\int_X|\nabla u|\, d\mu\, ds.
\end{align*}
An application of~\eqref{eq:weak-BE} gives
\begin{align*}
\bigg\lvert\int_X\varphi(x)[P_tu(x)-P_\tau u(x)]\, d\mu(x)\bigg\rvert
&\le C\Vert\varphi\Vert_{L^\infty(X)}\int_\tau^t  \frac{1}{\sqrt s} ds\, \int_X |\nabla u|\,  d\mu\\
&\le C\, \sqrt t\, \Vert\varphi\Vert_{L^\infty(X)}\, \int_X|\nabla u|\, d\mu.
\end{align*}
As the above holds for all compactly supported $\varphi\in \mathcal{F}\cap L^\infty(X)$, 
we obtain
\[
\Vert P_t u-P_\tau u\Vert_{L^1(X)}\le C\,\sqrt{t}\, \int_X|\nabla u|\, d\mu.
\]
Now by the fact that by the fact that $\{P_t\}_{t>0}$ has an
extension as a contraction semigroup to $L^1(X)$ such that $P_\tau u\to u$ as $\tau\to 0^+$ in $L^1(X)$ (see~\cite[Section 2.2]{ABCRST1}), we have 
\[
\Vert P_tu-u\Vert_{L^1(X)}\le C\, \sqrt{t}\, \int_X|\nabla u|\, d\mu.
\] 

Finally, if $u\in BV(X)$, then we can find a sequence $u_k\in \mathcal{F}\cap W^{1,1}(X)$ 
such that
$u_k\to u$ in $L^1(X)$ and $\lim_{k\to \infty}\int_X|\nabla u_k|\, d\mu=\Vert Du\Vert(X)$. By the contraction property of $P_t$ on $L^1(X)$, we have 
\begin{align*}
\Vert P_tu-u\Vert_{L^1(X)}
 &\le \Vert P_t(u-u_k)\Vert_{L^1(X)}+\Vert P_t u_k-u_k\Vert_{L^1(X)}+\Vert u_k-u\Vert_{L^1(X)}\\
 &\le C\Vert u-u_k\Vert_{L^1(X)}+C\, \sqrt{t}\, \int_X|\nabla u_k|\, d\mu+\Vert u-u_k\Vert_{L^1(X)}.
\end{align*}
Letting $k\to \infty$ concludes the proof.
\end{proof}
 

Note from the results of~\cite[Theorem~4.1]{MMS} that if the measure $\mu$ is doubling and supports a $1$-Poincar\'e 
inequality, then a measurable set $E\subset X$ is in the BV class if 
\[
\liminf_{t\to 0^+} \frac{1}{\sqrt{t}} \int_{E^{\sqrt{t}}\setminus E} P_t\mathbf{1}_E\, d\mu<\infty.
\]
Here $E^\varepsilon=\bigcup_{x\in E}B(x,\varepsilon)$. Note that  by the symmetry and conservativeness of the operator $P_t$,
\begin{align*}
\int_X|P_t\mathbf{1}_E-\mathbf{1}_E|\, d\mu &=\int_E (1-P_t\mathbf{1}_E)\, d\mu+\int_{X\setminus E}P_t\mathbf{1}_E\, d\mu\\
  &=\int_X\mathbf{1}_E(1-P_t\mathbf{1}_E)\, d\mu+\int_{X\setminus E}P_t\mathbf{1}_E\, d\mu\\
  &=\int_X (P_t\mathbf{1}_E)\, \mathbf{1}_{X\setminus E}\, d\mu+\int_{X\setminus E}P_t\mathbf{1}_E\, d\mu
  =2\int_{X\setminus E}P_t\mathbf{1}_E\, d\mu.
\end{align*}
Therefore,
\[
\int_{E^{\sqrt{t}}\setminus E} P_t\mathbf{1}_E\, d\mu\le \int_{X\setminus E}P_t\mathbf{1}_E\, d\mu
=\frac{1}{2}\Vert P_t\mathbf{1}_E-\mathbf{1}_E\Vert_{L^1(X)}.
\]
Thus if $\mu$ is doubling and supports a $1$-Poincar\'e inequality, and in addition
\[
\sup_{t>0}\frac{1}{\sqrt{t}}\, \Vert P_t\mathbf{1}_E-\mathbf{1}_E\Vert_{L^1(X)}<\infty,
\]
then $E$ is of finite perimeter. 
In our framework,  those results coming  from \cite{MMS} can not be used, since we do not assume the $1$-Poincar\'e inequality. Instead we prove the following theorem, which is the main result of the section. 


\begin{theorem}\label{thm:W=BV}
If the weak Bakry-\'Emery condition~\eqref{eq:weak-BE} holds, then $\mathbf{B}^{1,1/2}(X)=BV(X)$ with comparable 
seminorms.  Moreover, there exist constants $c,C>0$ such that for every $u \in BV(X)$,
\[
c \limsup_{s  \to 0 }  s^{-1/2}  \int_X P_s (|u-u(y)|)(y) d\mu(y)  
   \le \Vert Du\Vert(X) \le C\liminf_{s \to 0}  s^{-1/2}  \int_X P_s (|u-u(y)|)(y) d\mu(y) .
\]

\end{theorem}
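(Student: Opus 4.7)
The plan is to prove the asserted chain of inequalities as two separate estimates. Writing
$I(t,u):=\int_X P_t(|u-u(y)|)(y)\,d\mu(y)=\int_X\int_X p_t(x,y)|u(x)-u(y)|\,d\mu(x)\,d\mu(y)$,
the upper inequality $c\limsup_{s\to 0}s^{-1/2}I(s,u)\le\|Du\|(X)$ will come from the co-area formula together with Lemma~\ref{lem:L1-norm-control}, while the lower inequality $\|Du\|(X)\le C\liminf_{s\to 0}s^{-1/2}I(s,u)$ will come from a partition-of-unity discretization of $u$ on a maximally separated $\sqrt{t}$-cover, whose $L^1$ gradient is controlled via the Gaussian heat kernel lower bound~\eqref{eq:heat-Gauss2}; the latter will serve as a substitute for the (unavailable) $1$-Poincar\'e inequality. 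These two estimates together show that the Besov seminorm $\|u\|_{1,1/2}=\sup_{t>0}t^{-1/2}I(t,u)$ is comparable to $\|Du\|(X)$, identifying $\mathbf B^{1,1/2}(X)$ with $BV(X)$ with equivalent seminorms.

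For the upper bound, the symmetry and conservativeness of $P_t$ (exactly as in the computation appearing just before the theorem) give, for any measurable $E\subset X$ with $\mathbf 1_E\in BV(X)$, the identity $I(t,\mathbf 1_E)=\|P_t\mathbf 1_E-\mathbf 1_E\|_{L^1(X)}$, and Lemma~\ref{lem:L1-norm-control} then yields $I(t,\mathbf 1_E)\le C\sqrt{t}\,P(E,X)$. For a general $u\in BV(X)$, the pointwise layer-cake identity $|u(x)-u(y)|=\int_{\R}|\mathbf 1_{\{u>s\}}(x)-\mathbf 1_{\{u>s\}}(y)|\,ds$ and Fubini reduce $I(t,u)$ to an $s$-integral of the characteristic-function case, after which Theorem~\ref{lem:Co-area} converts $\int_{\R}P(\{u>s\},X)\,ds$ into $\|Du\|(X)$, giving $I(t,u)\le C\sqrt{t}\,\|Du\|(X)$ for every $t>0$ and hence the claimed upper bound.

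For the lower bound, let $u\in L^1(X)$ with $L:=\liminf_{s\to 0}s^{-1/2}I(s,u)<\infty$. Fix $\eps>0$, a maximally separated $\eps$-cover $\{B_i^\eps\}$ of $X$ and an associated $(C/\eps)$-Lipschitz partition of unity $\{\varphi_i^\eps\}$ as in Definition~\ref{def:maxsepcover}, and set $u_\eps:=\sum_i u_{B_i^\eps}\,\varphi_i^\eps$. The averaging operator $u\mapsto u_\eps$ is bounded on $L^1(X)$ by bounded overlap and doubling and converges to the identity uniformly on $C_c(X)$; combined with density of $C_c(X)$ in $L^1(X)$ this yields $u_\eps\to u$ in $L^1(X)$. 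Using $\sum_i\varphi_i^\eps\equiv 1$ to subtract the value $u_{B_j^\eps}$ on a generic $B_j^\eps$, an elementary computation bounds the local Lipschitz constant of $u_\eps$ on $B_j^\eps$ by $C\eps^{-1}\sum_{i:\,B_i^{2\eps}\cap B_j^{2\eps}\neq\emptyset}|u_{B_i^\eps}-u_{B_j^\eps}|$, and by Lemma~\ref{lem:diff-of-Lip} the same quantity bounds $|\nabla u_\eps|$ on $B_j^\eps$. In place of a $1$-Poincar\'e inequality I now use the Gaussian lower bound~\eqref{eq:heat-Gauss2} at time $t=\eps^2$: if $x\in B_i^\eps$ and $y\in B_j^\eps$ with $B_i^{2\eps}\cap B_j^{2\eps}\neq\emptyset$, then $d(x,y)\le C\sqrt{t}$ and so $p_t(x,y)\ge c/\mu(B(x,\sqrt{t}))$, giving the trivial but crucial bound $|u(x)-u(y)|\le C\mu(B(x,\sqrt{t}))\,p_t(x,y)|u(x)-u(y)|$. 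Substituting this into the double-average representation $|u_{B_i^\eps}-u_{B_j^\eps}|\le \tfrac{1}{\mu(B_i^\eps)\mu(B_j^\eps)}\iint_{B_i^\eps\times B_j^\eps}|u(x)-u(y)|\,d\mu\,d\mu$, absorbing the volume factors by doubling, and summing in $j$ using bounded overlap yields $\int_X|\nabla u_\eps|\,d\mu\le C\eps^{-1}I(\eps^2,u)=Ct^{-1/2}I(t,u)$. Choosing $t_k\to 0$ along which the $\liminf$ is realised and $\eps_k=\sqrt{t_k}$, the sequence $\{u_{\eps_k}\}$ is admissible in Definitions~\ref{def-BV} and~\ref{def-Du} and produces $\|Du\|(X)\le C\,L$.

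The main obstacle is this lower-bound direction: without a $1$-Poincar\'e inequality, the Semmes-type averaging used in Lemma~\ref{absolute continuity} only yields an $L^2$ gradient estimate, which is too weak for BV. The crucial observation is that at the matching scale $\eps=\sqrt{t}$ the Gaussian lower bound states exactly $p_t(x,y)\mu(B(x,\sqrt{t}))\gtrsim 1$ on neighbouring $\eps$-balls, which is precisely what is needed to convert the double integral $I(t,u)$ into control on the discrete oscillations $|u_{B_i^\eps}-u_{B_j^\eps}|$. This matching is also what singles out the Besov exponent $\alpha=1/2$ as the correct one, since it reflects the heat-kernel time scaling.
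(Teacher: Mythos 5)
Your proof follows the same architecture as the paper's: the $BV\subset\mathbf B^{1,1/2}$ inclusion via the co-area formula (Theorem~\ref{lem:Co-area}) and the $L^1$-contraction estimate (Lemma~\ref{lem:L1-norm-control}) applied to each super-level set, and the converse via the Gaussian lower bound~\eqref{eq:heat-Gauss2} followed by a discrete-convolution argument on a maximally separated $\sqrt t$-cover. The only difference is that the paper delegates this last step to the second half of the proof of Theorem~3.1 in~\cite{MMS}, whereas you spell out that averaging argument explicitly — which is worthwhile since it confirms the paper's assertion that this part of~\cite{MMS} uses no $1$-Poincar\'e inequality, but does not constitute a genuinely different route.
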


\begin{proof}
First we assume that $u\in BV(X)$. We can assume $u \ge 0$ a.e. We know that for almost every $t \ge 0$ the set $E_t$ is of finite perimeter,
where
\[
E_t=\{x\in X\, :\, u(x)>t\},
\]
and by the co-area formula for BV functions (see Theorem ~\ref{lem:Co-area}),
\[
\Vert Du\Vert(X)=\int_{0}^{+\infty} \Vert D\mathbf{1}_{E_t}\Vert(X)\, dt.
\]
For such $t$, by Lemma~\ref{lem:L1-norm-control} we know that
\[
\sup_{s>0}\frac{1}{\sqrt{s}}\int_X|P_s\mathbf{1}_{E_t}(x)-\mathbf{1}_{E_t}(x)|\, d\mu (x) \le C\, \Vert D\mathbf{1}_{E_t}\Vert(X).
\]
Now, setting $A=\{(x,y)\in X\times X\, :\, u(x)<u(y)\}$, we have for $s>0$,
\begin{align*}
   \int_{X}\int_X & p_s(x,y)  |u(x)-u(y)|\, d\mu(x)d\mu(y) \\
 &\qquad \qquad \qquad =2\int_A\, p_s(x,y)|u(x)-u(y)|\, d\mu(x)d\mu(y)\\
 &\qquad \qquad \qquad =2\int_A\, \int_{u(x)}^{u(y)}\, p_s(x,y)\, dt\, d\mu(x)d\mu(y)\\
 &\qquad \qquad \qquad =2\int_X\int_X \int_0^{+\infty} \mathbf{1}_{[u(x),u(y))}(t)\, \mathbf{1}_A(x,y)\, p_s(x,y)\, dt\, d\mu(x)d\mu(y) \\
 &\qquad \qquad \qquad =2\int_0^{+\infty} \int_X\int_X \mathbf{1}_{E_t}(y)[1-\mathbf{1}_{E_t}(x)]\, p_s(x,y)\, d\mu(x)d\mu(y)\, dt\\
 &\qquad \qquad \qquad =2\int_0^{+\infty} \int_X P_s\mathbf{1}_{E_t}(x)[1-\mathbf{1}_{E_t}(x)]\, d\mu(x)\, dt\\
 &\qquad \qquad \qquad =2\int_0^{+\infty} \int_{X\setminus E_t}P_s\mathbf{1}_{E_t}(x)\, d\mu(x)\, dt.
\end{align*}
Observe that
\[
\int_{X\setminus E_t}P_s\mathbf{1}_{E_t}(x)\, d\mu(x)=\int_{X\setminus E_t}|P_s\mathbf{1}_{E_t}(x)-\mathbf{1}_{E_t}(x)|\, d\mu(x) \le \int_X|P_s\mathbf{1}_{E_t}(x)-\mathbf{1}_{E_t}(x)|\, d\mu(x).
\]
Therefore we obtain
\[
\int_X\int_X p_s(x,y) \ |u(x)-u(y)|\, d\mu(x) d\mu(y)\le 2\int_0^{+\infty} \Vert P_s\mathbf{1}_{E_t}-\mathbf{1}_{E_t}\Vert_{L^1(X)}\, dt.
\]
An application of Lemma~\ref{lem:L1-norm-control} now gives
\[
\int_X\int_X p_s(x,y) \ |u(x)-u(y)|\, d\mu(x)d\mu(y)\le C\sqrt{s}\, \int_0^{+\infty} \Vert D\mathbf{1}_{E_t}\Vert(X)\, dt,
\]
whence with the help of the co-area formula we obtain
\[
\Vert u\Vert_{1,1/2}\le C\, \Vert Du\Vert(X),
\]
that is, $u\in \mathbf{B}^{1,1/2}(X)$. 
Thus $BV(X)\subset \mathbf{B}^{1,1/2}(X)$ boundedly.

\

Now we show that $\mathbf{B}^{1,1/2}(X)\subset BV(X)$. This inclusion holds
even when $\mathcal{E}$ does not support a Bakry-\'Emery curvature
condition; only a $2$-Poincar\'e inequality and the doubling condition on
$\mu$ are needed.

Set $\Delta_\varepsilon=\{(x,y)\in X\, :\, d(x,y)<\varepsilon\}$ for some 
$\varepsilon>0$. Suppose that $u\in \mathbf{B}^{1,1/2}(X)$.  By~\eqref{eq:heat-Gauss2}, we have a Gaussian lower bound for the heat kernel:
\[
 p_t(x,y)\ge \frac{e^{-c\,d(x,y)^2/t}}{C \mu(B(x,\sqrt{t}))}.
\]
Therefore  for any $t>0$ we get
\begin{align*}
\frac{1}{\sqrt{t}}\int_X\int_X p_t(x,y) \ |u(x)-u(y)|\, d\mu(x)d\mu(y)&\ge
 \frac{1}{\sqrt{t}} \int_X\int_X\frac{e^{-c\,d(x,y)^2/t}}{C\mu(B(x,\sqrt{t}))}|u(y)-u(x)|\, d\mu(y)\, d\mu(x)\\
& \ge  \frac{1}{C\sqrt{t}}\, 
 \iint_{\Delta_{\sqrt{t}} }\frac{|u(y)-u(x)|}{\mu(B(x,\sqrt{t}))}\, d\mu(x)d\mu(y).
\end{align*}
%
It follows that
\begin{equation}\label{eq:Ledoux1}
\liminf_{\varepsilon\to 0^+}\frac{1}{\varepsilon}
 \iint_{\Delta_\varepsilon}\frac{|u(y)-u(x)|}{\mu(B(x,\varepsilon))}\, d\mu(x)d\mu(y) <\infty.
\end{equation}
Now an argument as in the second half of the proof of~\cite[Theorem~3.1]{MMS} tells us that 
$u\in BV(X)$. We point out here that although Theorem~3.1 in \cite{MMS} assumes that $X$ supports a $1$-Poincar\'e 
inequality, the second part of the proof there does not need this assumption. In fact,  the argument using
discrete convolution there is valid also in our setting. It is this second part of the proof that we referred to above.
We then obtain
\begin{equation*}
\Vert Du\Vert(X)\le \liminf_{\varepsilon\to 0^+}\frac{1}{\varepsilon}
 \iint_{\Delta_\varepsilon}\frac{|u(y)-u(x)|}{\mu(B(x,\varepsilon))}\, d\mu(x)d\mu(y) 
 \le 
 \Vert u\Vert_{1,1/2}.\qedhere
\end{equation*}
\end{proof}

\begin{remark}\label{chaining metric}
As a byproduct of this proof,  we also obtain that there exists a constant $C>0$ such that for every $u \in BV(X)$,
\begin{align*}
\sup_{\varepsilon >0} \frac{1}{\varepsilon}
 \iint_{\Delta_\varepsilon}\frac{|u(y)-u(x)|}{\mu(B(x,\varepsilon))}\, d\mu(x)d\mu(y) 
 \le  \liminf_{\varepsilon\to 0^+}\frac{C}{\varepsilon}
 \iint_{\Delta_\varepsilon}\frac{|u(y)-u(x)|}{\mu(B(x,\varepsilon))}\, d\mu(x)d\mu(y)
\end{align*}
because both sides are comparable to $\| Du\|(X)$. Indeed, the fact that $\| Du\|(X)$ is dominated by the right hand side is directly from Theorem \ref{thm:W=BV}, which, together with Proposition \ref{prop:B=B} (the metric characterization of Besov spaces), implies that the left hand side can be bounded by $\| Du\|(X)$. This property of the metric measure space $(X,d_\mathcal{E},\mu)$ can be viewed as 
an interesting consequence of the weak Bakry-\'Emery estimate.

\end{remark}

\begin{remark}\label{constancy 1}
Another application of Proposition~\ref{prop:B=B} is the following. It is in general not true that
if $\Vert Du\Vert(X)=0$ then $u$ is constant almost everywhere in $X$, even if $X$ is connected. 
Should $X$ support a $1$-Poincar\'e inequality, it follows immediately that if $\Vert Du\Vert(X)=0$ then  $u$ is constant.
 We can use the above proposition to show that even if we do not have $1$-Poincar\'e 
inequality, if $X$ supports the Bakry-\'Emery curvature condition~\eqref{eq:weak-BE}, then
\[
\sup_{t>0} \int_X\int_{B(x,t)}\frac{|u(x)-u(y)|}{t\mu(B(x,t))}\, d\mu(y)\, d\mu(x)
 \simeq \Vert Du\Vert(X),
\]
and hence if $\Vert Du\Vert(X)=0$ then $u$ is constant.
\end{remark}

\subsection{Sets of finite perimeter}

We  introduce some notions from the paper of  Ambrosio~\cite{Ambrosio2002}.
Given $A\subset X$ we set
\[
\mathcal{H}(A):=\lim_{\varepsilon\to 0^+} \inf\bigg\lbrace\sum_i\frac{\mu(B_i)}{\text{rad}(B_i)}\, 
:\, A\subset\bigcup_iB_i, \text{ and }\forall i, \ \text{rad}(B_i)<\varepsilon\bigg\rbrace.
\]
It is known, see~\cite[Proposition 6.3]{KKST}, even without the assumption that $X$ supports a  $2$-Poincar\'e inequality, that if
$\mathcal{H}(\partial E)<\infty$, then $E$ is of finite perimeter. 

Now let $E\subset X$ be a set of finite perimeter and define  the measure-theoretic boundary  by
\[
\partial_mE=\left\{x\in X:\limsup_{r\to 0^+}\frac{\mu(B(x,r)\cap E)}{\mu(B(x,r))}>0,\,
\limsup_{r\to 0^+}\frac{\mu(B(x,r)\setminus E)}{\mu(B(x,r))}>0\right\}.
\]
For $\alpha\in (0,1/2)$, define also 
\[
\partial_\alpha E=\left\{x\in X:\liminf_{r\to 0^+} \min\left\{\frac{\mu(B(x,r)\cap E)}{\mu(B(x,r))},\,
\frac{\mu(B(x,r)\setminus E)}{\mu(B(x,r))}\right\}> \alpha\right\}.
\]


If $X$ supports a $1$-Poincar\'e inequality then, by the results of~\cite[Theorems~5.3, 5.4]{Ambrosio2002}, 
there is  $\gamma>0$ such that
$\mathcal{H}(\partial_m E\setminus \partial_\gamma E)=0$.  Moreover, $\mathcal{H}(\partial_mE)<\infty$, and $P(E,\cdot)\ll\mathcal{H}|_{\partial_mE}$ with Radon-Nikodym derivative bounded below by some $\delta>0$. Both $\gamma$ and $\delta$ depend solely on the doubling and the $1$-Poincar\'e constants.


We are not assuming $X$ supports a $1$-Poincar\'e inequality, but only that $\mu$ is doubling and  $X$ supports a $2$-Poincar\'e inequality, in which case we have the following bound. 

\begin{proposition}\label{lem:Hausdorff-perimeter}
Suppose that
$E\subset X$ with $\Vert \mathbf{1}_E\Vert_{\mathbf{B}^{1,1/2}(X)}<\infty$. Then for all
 $0<\alpha<1$,
we have $\mathcal{H}(\partial_\alpha E)\le \frac{C}{\alpha}\, P(E,X)$.
\end{proposition}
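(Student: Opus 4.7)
The plan is to bound $\mathcal{H}(\partial_\alpha E)$ by $C\alpha^{-1}\|\mathbf{1}_E\|_{\mathbf{B}^{1,1/2}(X)}$ via a Vitali-type covering combined with a local-scale lower bound on the heat semigroup Besov integrand, and then invoke Theorem~\ref{thm:W=BV} to convert the Besov seminorm into $P(E,X)$. Since $\partial_\alpha E$ is defined by a liminf condition, first decompose it as $\partial_\alpha E=\bigcup_{N\in\mathbb{N}} A_N$, where $A_N$ consists of those $x\in \partial_\alpha E$ for which the two density inequalities in the definition hold for all radii $r<1/N$. By the outer-measure construction of $\mathcal{H}$, it suffices to bound $\mathcal{H}(A_N)$ uniformly in $N$.

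The core step is a one-scale estimate. Fix $N$ and a radius $r<1/N$; set $t=r^2$. Extract a maximal $2r$-separated subset $\{x_i\}\subset A_N$, so that $\{B(x_i,r)\}$ is pairwise disjoint and $\{B(x_i,2r)\}$ covers $A_N$. The claim is that for each $x_i$ and every $y\in B(x_i,r)$,
\[
\int_X p_t(y,z)\,|\mathbf{1}_E(z)-\mathbf{1}_E(y)|\,d\mu(z)\;\ge\; c\alpha.
\]
This follows from the Gaussian lower bound~\eqref{eq:heat-Gauss2}: for $z\in B(x_i,r)\subset B(y,2r)$ one has $p_t(y,z)\ge c/\mu(B(y,r))\ge c'/\mu(B(x_i,r))$ by volume doubling, and either $B(x_i,r)\cap E$ or $B(x_i,r)\setminus E$ has measure at least $\alpha\mu(B(x_i,r))$ (depending on whether $y\in E$ or not) by the defining property of $A_N$ at the center $x_i$. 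Integrating this pointwise lower bound over $y\in B(x_i,r)$ and summing over the disjoint family yields
\[
c\alpha\sum_i \mu(B(x_i,r))\;\le\;\int_X\!\int_X p_t(y,z)\,|\mathbf{1}_E(z)-\mathbf{1}_E(y)|\,d\mu(z)\,d\mu(y)\;\le\; r\,\|\mathbf{1}_E\|_{1,1/2}
\]
by the very definition of the Besov seminorm at scale $t=r^2$.

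Using doubling once more, $\mu(B(x_i,2r))\le C\mu(B(x_i,r))$, so by the covering $A_N\subset \bigcup_i B(x_i,2r)$ and the definition of $\mathcal{H}$ we obtain
\[
\mathcal{H}_{2r}(A_N)\;\le\;\sum_i \frac{\mu(B(x_i,2r))}{2r}\;\le\;\frac{C}{\alpha}\,\|\mathbf{1}_E\|_{1,1/2}.
\]
Letting $r\to 0^+$ gives $\mathcal{H}(A_N)\le (C/\alpha)\|\mathbf{1}_E\|_{1,1/2}$ uniformly in $N$, and letting $N\to\infty$ (with countable subadditivity of $\mathcal{H}$ or the Borel regularity applied to the increasing sequence $A_N$) yields $\mathcal{H}(\partial_\alpha E)\le (C/\alpha)\|\mathbf{1}_E\|_{1,1/2}$. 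Theorem~\ref{thm:W=BV} now gives $\|\mathbf{1}_E\|_{1,1/2}\le C P(E,X)$, which concludes the proof.

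The main obstacle is the local lower bound step: we only know the density condition at the center $x_i$, yet we need the inequality $P_t(|\mathbf{1}_E-\mathbf{1}_E(y)|)(y)\ge c\alpha$ to hold for \emph{every} $y$ in the whole ball $B(x_i,r)$ (not just at $x_i$), so that we may integrate and pick up the factor $\mu(B(x_i,r))$ needed to match the Hausdorff-type content. The argument hinges on transferring the Gaussian lower bound from $x_i$ to $y\in B(x_i,r)$, which costs only a multiplicative constant depending on doubling, and on the elementary fact that $|\mathbf{1}_E(z)-\mathbf{1}_E(y)|=1$ on the ``opposite'' side of $E$ within $B(x_i,r)$ regardless of which side $y$ itself lies on.
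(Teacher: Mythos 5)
Your proof is correct and follows essentially the same strategy as the paper's: decompose $\partial_\alpha E$ into the sets where the density condition holds uniformly below a fixed scale (your $A_N$, the paper's $\partial_\alpha^{r_0}E$ with $r_0=1/N$), cover each such set by balls at the matching scale, use the Gaussian lower bound for $p_t$ to obtain a lower bound in terms of $\alpha\sum_i\mu(B_i)$, compare to the Besov seminorm, and finish with Theorem~\ref{thm:W=BV}. The only differences are cosmetic: you use a pointwise lower bound on $P_t(|\mathbf{1}_E-\mathbf{1}_E(y)|)(y)$ integrated over $y\in B(x_i,r)$, whereas the paper integrates directly over $(B_i\cap E)\times(B_i\setminus E)$; and you use a disjoint family $\{B(x_i,r)\}$ with the doubled balls covering, whereas the paper uses a maximally separated covering with bounded overlap. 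Both yield the same estimate.

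Two small caveats. First, your closing remark that ``countable subadditivity'' would let you pass from $A_N$ to $\bigcup_N A_N$ is not right: subadditivity alone gives $\mathcal{H}(\bigcup A_N)\le\sum\mathcal{H}(A_N)$, which is useless here since the bounds on $\mathcal{H}(A_N)$ are uniform, not summable. What you actually need is continuity from below, which holds for the increasing sequence $A_N$ once you know these are $\mathcal{H}$-measurable — e.g.\ Borel, since $\mathcal{H}$ is a metric outer measure. You gesture at this with ``or the Borel regularity applied to the increasing sequence,'' which is the right idea, but the Borel measurability of $A_N$ needs to be verified (the paper does this explicitly by writing $\partial_\alpha^{r_0}E$ as a countable intersection of superlevel sets of lower semicontinuous functions). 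Second, when you write $|\mathbf{1}_E(z)-\mathbf{1}_E(y)|=1$ on the side of $E$ opposite to $y$, this is of course an $\mu$-a.e.\ statement in $y$, which is exactly what is needed for the subsequent integration, but it is worth stating.
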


\begin{remark} 
According to~\cite{Lahti}, if $(X,d_{\mathcal{E}},\mu)$ is doubling and supports a $1$-Poincar\'e inequality,
then there is $\alpha$, depending solely on the doubling and the 
Poincar\'e constants, such that finiteness of $\mathcal{H}(\partial_\alpha E)$ implies that $P(E,X)$ is finite and $\mathcal{H}(\partial_m E\setminus\partial_\alpha E)=0$. \end{remark}

\begin{proof}
For $r_0>0$ and $0<\alpha\le 1/2$ let
\begin{align*}
\partial_\alpha^{r_0}E
	&=\left\{x\in X: \min\left\{\frac{\mu(B(x,r)\cap E)}{\mu(B(x,r))}, \, \frac{\mu(B(x,r)\setminus E)}{\mu(B(x,r))}\right\}>\alpha \text{ for all } 0<r\le r_0\right\} \\
	&= \{\partial_\alpha^{r_0}E=\{x\in X\, :\, \Phi_{E,r_0}(x)>\alpha\}\cap\{x\in X\, :\, \Phi_{X\setminus E,r_0}(x)>\alpha\}, \text{ where}\\
	\Phi_{E,r_0}&:=\inf_{r\in\mathbb{Q}\cap(0,r_0]} \frac{\mu(B(x,r)\cap E)}{\mu(B(x,r))}.
\end{align*}
Since $\mu$ is Borel regular we know $x\mapsto \mu(B(x,r)\cap E)$ is lower semicontinuous, so $\frac{\mu(B(x,r)\cap E)}{\mu(B(x,r))}$ and $\Phi_{E,r_0}$ are  Borel functions.  It follows that the sets $\partial_\alpha^{r_0}E$ are Borel, and we conclude by writing $\partial_\alpha E= \bigcup_{(0,1)\cap\mathbb{Q}}\partial_\alpha^{r_0}E$ both that $\partial_\alpha E$ is Borel and, by continuity of measure, that we need only prove $\mathcal{H}(\partial_\alpha^{r_0}E)\le \frac{C}{\alpha}\, P(E,X)$ for each $r_0$.

Using Theorem~\ref{thm:W=BV}, if $\mathbf{1}_E\in BV(X)$ then
\begin{equation*}
  \sup_{t>0}\frac{1}{\sqrt{t}}\int_X\int_Xp_t(x,y)|\mathbf{1}_E(x)-\mathbf{1}_E(y)|\, d\mu(x)\, d\mu(y)
  \leq C \|D\mathbf{1}_E\|(X) = C P(E,X)
  \end{equation*}
Fix $t<(r_0/3)^2\leq 1/36$ and let $\{B_i\}_i$ be a maximally separated $\sqrt{t}$-covering of  $\partial_\alpha^{r_0}E$, so the balls $5B_i$ have bounded overlap (see Section \ref{sec:D-P}).  Observe that for $x,y\in B_i$ the  Gaussian lower bound for $p_t(x,y)$ in~\eqref{eq:heat-Gauss} becomes $p_t(x,y)\geq C\mu(B_i)^{-1}$. (In this calculation $C$ denotes various constants that can change even within an expression, but depend only on the doubling and Poincar\'e constants of the space.)  Thus
\begin{align*}
\sqrt{t}\, P(E,X)&\ge C\sum_i\int_{B_i\cap E}\int_{B_i\setminus E}
p_t(x,y)\, d\mu(x)\, d\mu(y)\\
&\ge C\sum_i \frac1{\mu(B_i)} \int_{B_i\cap E}\int_{B_i\setminus E} 
  d\mu(x)\, d\mu(y)\\
&\ge C \sum_i\frac{\mu(B_i\cap E)\,\mu(B_i\setminus E)\, \mu(B_i)}{\mu(B_i)^2}
\geq C \alpha \sum \mu(B_i)
\end{align*}
where the last inequality uses that at least one of $\mu(B_i\cap E)$ and $\mu(B_i\setminus E)$
is larger than $\mu(B_i)/2$ and the other is bounded below by $\alpha\mu(B_i)$ on $\partial _\alpha^{r_0}E$.  However each $B_i$ has radius $\sqrt{t}$, so
\[
P(E,X)\ge C \alpha \sum_i\frac{\mu(B_i)}{\mathrm{rad}(B_i)}
\]
and thus $P(E,X)\geq C\alpha\mathcal{H}(\partial _\alpha^{r_0}E)$, completing the proof.
\end{proof}


Proposition \ref{lem:Hausdorff-perimeter}  gives us a way to control, from above, the $\mathcal{H}$-measure of 
$\partial_\alpha E$ for a set $E$ of finite perimeter.  This should be contrasted with the following lower bound on the  co-dimension~$1$ Minkowski measure of $\partial E$.  For a set $A\subset X$,
the co-dimension $1$-Minkowski measure of $A$ is defined to be
\[
\mathcal{M}_{-1}(A):=\liminf_{\varepsilon\to 0^+} \frac{\mu(A_\varepsilon)}{\varepsilon},
\]
where $A_\varepsilon=\bigcup_{x\in A}B(x,\varepsilon)$.
 
\begin{proposition}
{\bf} We have for a set $E$ of finite perimeter that
\begin{equation*}
P(E,X)  \leq  \mathcal{M}_{-1}(\partial E).
\end{equation*}
\end{proposition}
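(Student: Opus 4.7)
The plan is to construct explicit locally Lipschitz approximations of $\mathbf{1}_E$ whose energies are controlled by $\mu((\partial E)_\varepsilon)/\varepsilon$; I may assume $\mathcal{M}_{-1}(\partial E)<\infty$, else the inequality is vacuous. For $\varepsilon>0$ set
\[
u_\varepsilon(x) := \bigl(1 - \tfrac{1}{\varepsilon}\, d_{\mathcal{E}}(x,E)\bigr)_+ .
\]
Then $u_\varepsilon$ is globally $(1/\varepsilon)$-Lipschitz on $(X,d_{\mathcal{E}})$, equals $1$ on $\overline{E}$, vanishes outside $E^\varepsilon$, and by Lemma~\ref{lem:diff-of-Lip} satisfies $u_\varepsilon\in\mathcal{F}_{\mathrm{loc}}(X)$ with $|\nabla u_\varepsilon|\le 1/\varepsilon$ a.e. By the strong locality of $\mathcal{E}$, $|\nabla u_\varepsilon|$ also vanishes on the open sets $E^\circ$ (where $u_\varepsilon\equiv 1$) and $X\setminus\overline{E^\varepsilon}$ (where $u_\varepsilon\equiv 0$), so its support lies in $\overline{E^\varepsilon\setminus E^\circ}$.

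The geometric crux is the inclusion $E^\varepsilon\setminus E^\circ \subset (\partial E)_\varepsilon$. Points of $\partial E$ lie in $(\partial E)_\varepsilon$ by definition; if $x\in E^\varepsilon\setminus\overline{E}$, pick $y\in E$ with $d_{\mathcal{E}}(x,y)<\varepsilon$. Under our hypotheses $d_{\mathcal{E}}$ is a length metric on a complete, locally compact space (a classical fact from \cite{St-I}), so Hopf--Rinow provides a minimizing geodesic $\gamma:[0,d_{\mathcal{E}}(x,y)]\to X$ from $x$ to $y$. Since $\gamma$ starts in the open set $X\setminus\overline{E}$ and ends in $E$, the value $t^*:=\inf\{t:\gamma(t)\in\overline{E}\}$ is strictly positive and, by continuity, $\gamma(t^*)\in\overline{E}\setminus E^\circ=\partial E$, with $d_{\mathcal{E}}(x,\gamma(t^*))=t^*\le d_{\mathcal{E}}(x,y)<\varepsilon$. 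Combining this inclusion with the previous paragraph yields
\[
\int_X |\nabla u_\varepsilon|\, d\mu \;\le\; \frac{1}{\varepsilon}\, \mu\bigl((\partial E)_\varepsilon\bigr).
\]

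To finish, choose $\varepsilon_k\to 0^+$ realizing $\liminf \mu((\partial E)_{\varepsilon_k})/\varepsilon_k = \mathcal{M}_{-1}(\partial E)$. Since $|u_{\varepsilon_k}-\mathbf{1}_E|\le \mathbf{1}_{E^{\varepsilon_k}\setminus E}\le \mathbf{1}_{(\partial E)_{\varepsilon_k}}$ and $\mu((\partial E)_{\varepsilon_k})\to 0$, we have $u_{\varepsilon_k}\to\mathbf{1}_E$ in $L^1_{\mathrm{loc}}(X)$. For each bounded open $U\subset X$, Definition~\ref{def-Du} then gives $\|D\mathbf{1}_E\|(U)\le \mathcal{M}_{-1}(\partial E)$, and passing to the supremum over such $U$ via the Radon measure property of Theorem~\ref{thm:outer-measure} and the $\sigma$-compactness of $X$ yields $P(E,X)=\|D\mathbf{1}_E\|(X)\le \mathcal{M}_{-1}(\partial E)$. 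The principal obstacle is the geometric containment $E^\varepsilon\setminus E^\circ\subset (\partial E)_\varepsilon$: it can fail in a general metric measure space, and its validity here rests on $d_{\mathcal{E}}$ being a geodesic metric, which is a structural consequence of strict locality together with the standing completeness and local compactness assumptions.
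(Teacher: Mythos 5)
Your approach matches the paper's: construct a locally Lipschitz cutoff from the intrinsic distance, bound $|\nabla u_\varepsilon|$ by $1/\varepsilon$ via Lemma~\ref{lem:diff-of-Lip} and strong locality, localize the support to a set comparable to $(\partial E)_\varepsilon$, and conclude from Definition~\ref{def-Du}. In fact your write-up is more careful than the paper's two-line proof, whose cutoff $u_\varepsilon=\min\{1,\varepsilon^{-1}\mathrm{dist}(x,X\setminus(\partial E)_\varepsilon)\}$ as literally written vanishes deep inside $E^\circ$ and so cannot converge to $\mathbf{1}_E$ (evidently a typo; your $u_\varepsilon=(1-\varepsilon^{-1}d_\mathcal{E}(x,E))_+$ is a correct choice), and you rightly isolate the geometric crux that the paper leaves entirely implicit, namely $E^\varepsilon\setminus E^\circ\subset(\partial E)_\varepsilon$, which indeed fails in general metric spaces and genuinely needs the geodesic property of $d_\mathcal{E}$. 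One small wrinkle to note: strong locality gives $|\nabla u_\varepsilon|=0$ a.e.\ only on the \emph{open} set $\{d_\mathcal{E}(\cdot,E)>\varepsilon\}$, so the a.e.\ support is $\overline{E^\varepsilon}\setminus E^\circ$, which can include the sphere $\{d_\mathcal{E}(\cdot,E)=\varepsilon\}$ lying outside the open set $(\partial E)_\varepsilon$; since these spheres are pairwise disjoint and $\mu$ is $\sigma$-finite, all but countably many are $\mu$-null, so one simply chooses the sequence $\varepsilon_k$ realizing the liminf to avoid the exceptional set, and then $\int_X|\nabla u_{\varepsilon_k}|\,d\mu\le\varepsilon_k^{-1}\mu((\partial E)_{\varepsilon_k})$ holds as you claim. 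Finally, since $\|u_{\varepsilon_k}-\mathbf{1}_E\|_{L^1(X)}\le\mu((\partial E)_{\varepsilon_k})\to0$ is already global $L^1$ convergence, you can apply Definition~\ref{def-Du} directly with $U=X$; the detour through $L^1_{\mathrm{loc}}$ and $\sigma$-compactness is unnecessary, though not wrong.
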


\begin{proof}
We can assume $\mathcal{M}_{-1}(\partial E)<\infty$. For each $\varepsilon>0$, consider
\[
u_\varepsilon(x)=\min\{1,\varepsilon^{-1}\text{dist}_{d_{\mathcal{E}}}(x,X\setminus A_\varepsilon)\},
\]
where $\text{dist}_{d_{\mathcal{E}}}(x,A)=\inf\{d_{\mathcal{E}}(x,y)\, :\, y\in A\}$ and $A=\partial E$. Lemma~\ref{lem:diff-of-Lip} together with the fact that $u_\varepsilon\to\chi_E$ as $\varepsilon\to 0^+$
give the desired result.
%
\end{proof}

\subsection{Under the quasi Bakry-\'Emery condition, $\mathbf{B}^{p,1/2}(X)=W^{1,p}(X)$ for $p >1$}

In this section we compare the Besov and Sobolev seminorms for $p>1$. The case $p=1$ was studied in detail in  Section 4.2. Our main theorem in this section is the following:

\begin{theorem}\label{eq:Besov-Sobolev}
Suppose that the quasi Bakry-\'Emery condition~\eqref{eq:strong-BE} holds. Then, for every $p>1$, $\mathbf{B}^{p,1/2}(X)=W^{1,p}(X)$ with comparable norms.
\end{theorem}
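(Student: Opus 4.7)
The plan is to obtain Theorem~\ref{eq:Besov-Sobolev} as an immediate corollary of two complementary seminorm estimates, each the content of a dedicated companion theorem cited in the statement. Theorem~\ref{thm:BesovUB} is to provide the Besov \emph{upper} bound $\|u\|_{p,1/2} \le C\,\|\,|\nabla u|\,\|_{L^p(X)}$ for every $u \in W^{1,p}(X)$, while Theorem~\ref{thm:BesovLB} is to provide the matching \emph{lower} bound $\|\,|\nabla u|\,\|_{L^p(X)} \le C'\,\|u\|_{p,1/2}$, together with the membership statements $u \in \mathcal F_{\mathrm{loc}}(X)$ and $\Gamma(u,u)\ll\mu$, for every $u \in \mathbf{B}^{p,1/2}(X)$. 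Combined, these two yield both the set equality and the norm comparability asserted in the theorem.

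For Theorem~\ref{thm:BesovUB}, the main tool is the pointwise quasi Bakry--\'Emery estimate $|\nabla P_s u| \le C\, P_s |\nabla u|$. The natural intermediate step is an $L^p$-analog of Lemma~\ref{lem:L1-norm-control}, namely $\|P_t u - u\|_{L^p(X)} \le C\sqrt{t}\,\|\,|\nabla u|\,\|_{L^p(X)}$. I would obtain this via duality, pairing $P_t u - u$ against $\varphi \in L^{p'}(X)$ drawn from a sufficiently regular dense subclass, using
\[
\int_X \varphi\,(P_t u - u)\, d\mu = -\int_0^t \mathcal E(P_s\varphi, u)\, ds
\]
and transferring the gradient onto $|\nabla u|$ via quasi Bakry--\'Emery before applying H\"older's inequality. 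Once this $L^p$ semigroup estimate is in hand, the symmetric Besov double integral $\int\int p_t(x,y)|u(x)-u(y)|^p\, d\mu\, d\mu$ is controlled through the telescoping decomposition
\[
u(x) - u(y) = (u(x) - P_t u(x)) + (P_t u(x) - P_t u(y)) + (P_t u(y) - u(y)),
\]
the outer terms being bounded directly by the $L^p$ semigroup estimate and the middle term by an integrated form of the pointwise gradient bound.

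For Theorem~\ref{thm:BesovLB}, the plan is first to invoke Proposition~\ref{prop:B=B} to translate membership in $\mathbf{B}^{p,1/2}(X)$ into membership in the metric Besov class $B^1_{p,\infty}(X)$. Then, closely following the proof of Lemma~\ref{absolute continuity}, I would build locally Lipschitz approximants $u_\eps := \sum_i u_{B_i^\eps}\,\varphi_i^\eps$ via a maximally separated $\eps$-covering and a Lipschitz partition of unity, and bound $|\nabla u_\eps|$ pointwise by local oscillations of $u$. The key departure from Lemma~\ref{absolute continuity} is that these oscillations are controlled by the metric Besov seminorm in place of the $2$-Poincar\'e inequality, yielding $\|\,|\nabla u_\eps|\,\|_{L^p(X)} \le C\|u\|_{p,1/2}$ uniformly in $\eps$. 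Reflexivity of $W^{1,p}(X)$ for $p>1$ together with Mazur's lemma then extract a convex combination that converges in the $\mathcal E_1$-norm to $u$, identifying $u$ with an element of $W^{1,p}(X)$ satisfying the desired gradient bound.

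The main obstacle is the Besov upper bound (Theorem~\ref{thm:BesovUB}): the $L^1$--$L^\infty$ duality used in Lemma~\ref{lem:L1-norm-control} under the weak Bakry--\'Emery condition does not transfer cleanly to $L^p$--$L^{p'}$ duality for $p \in (1,\infty)$, because one has no a priori control of $\|\,|\nabla\varphi|\,\|_{p'}$ by $\|\varphi\|_{p'}$. The stronger pointwise quasi Bakry--\'Emery inequality is precisely what permits moving the semigroup from $\varphi$ onto $|\nabla u|$ inside the duality pairing, thereby circumventing this obstruction; the subsequent passage from the $L^p$ semigroup bound to the symmetric double integral is comparatively routine.
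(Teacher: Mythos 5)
Your overall structure is correct and matches the paper: the theorem is deduced by combining the inclusion $\mathbf{B}^{p,1/2}(X)\subset W^{1,p}(X)$ (Theorem~\ref{thm:BesovLB}) with the inclusion $W^{1,p}(X)\subset\mathbf{B}^{p,1/2}(X)$ (Theorem~\ref{thm:BesovUB}), and your outline for Theorem~\ref{thm:BesovLB} --- pass to the metric Besov class via Proposition~\ref{prop:B=B}, build the discrete-convolution approximants $u_\eps$, get a uniform $L^p$ bound on $|\nabla u_\eps|$, use reflexivity and Mazur's lemma --- is exactly what the paper does. However your treatment of Theorem~\ref{thm:BesovUB} has two genuine gaps, and this is where essentially all of the new difficulty of the $p>1$ case lives.

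First, the $L^p$ semigroup estimate $\|P_tu-u\|_{L^p}\le C\sqrt{t}\,\|\,|\nabla u|\,\|_{L^p}$ does not follow from the quasi Bakry--\'Emery bound by transferring the gradient and applying H\"older, as you suggest. If you pair against $\varphi\in L^{p'}$, you must control $\|\,|\nabla P_s\varphi|\,\|_{L^{p'}}$ by $s^{-1/2}\|\varphi\|_{L^{p'}}$; the pointwise inequality $|\nabla P_s\varphi|\le C P_s|\nabla\varphi|$ is a \emph{non-smoothing} estimate and at best gives $\|\,|\nabla P_s\varphi|\,\|_{L^{p'}}\lesssim\|\,|\nabla\varphi|\,\|_{L^{p'}}$, producing a factor of $t$, not $\sqrt{t}$, and leaving an uncontrolled gradient of $\varphi$. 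What is actually needed is Lemma~\ref{lem:LpGradient}, $\|\,|\nabla P_t u|\,\|_{L^p}^2\le\frac{C}{t}\|u\|^2_{L^p}$, which the paper obtains as a corollary of a Hamilton-type reverse log-Sobolev estimate for the heat kernel (Theorem~\ref{Hamilton estimate}) via the pointwise bound $|\nabla P_tu|\le\frac{C}{\sqrt{t}}(P_t|u|^p)^{1/p}$ (Corollary~\ref{cor:ptwiseboundfornablaPtbyPtLp}). This is substantial machinery that your proposal does not identify; the $L^1$--$L^\infty$ argument of Lemma~\ref{lem:L1-norm-control} does transfer, but only once Lemma~\ref{lem:LpGradient} is in hand.

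Second, your telescoping decomposition of $u(x)-u(y)$ into three pieces leaves the middle term $\iint|P_tu(x)-P_tu(y)|^p p_t(x,y)\,d\mu\,d\mu$ unresolved: controlling it by $t^{p/2}\|\,|\nabla P_tu|\,\|_{L^p}^p$ is precisely the inequality $\|P_tu\|_{p,1/2}\lesssim\|\,|\nabla P_tu|\,\|_{L^p}$ that you are trying to prove, so the step is circular as stated. The alternative of writing $|P_tu(x)-P_tu(y)|\lesssim d(x,y)\bigl(M_q|\nabla P_tu|(x)+M_q|\nabla P_tu|(y)\bigr)$ and using maximal function bounds requires a $q$-Poincar\'e inequality for some $q<p$, which is not available when $1<p\le 2$ under the standing $2$-Poincar\'e assumption. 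The paper instead uses the asymmetric two-term decomposition $u(x)-u(y)=(u(x)-P_tu(x))+(P_tu(x)-u(y))$ and handles the mixed term through Lemma~\ref{lem:HSminusF2}: one writes $\iint|P_tu(x)-u(y)|^p p_t(x,y)\,d\mu\,d\mu=\int P_t(|P_tu(x)-u|^p)(x)\,d\mu(x)$, linearizes by $L^p$--$L^{p'}$ duality in the measure $p_t(x,\cdot)\,d\mu$, uses the identity $P_t((u-P_tu(x))g)(x)=2\int_0^tP_s(\Gamma(P_{t-s}u,P_{t-s}g))(x)\,ds$, and then applies the quasi Bakry--\'Emery bound to the $u$-factor and Corollary~\ref{cor:ptwiseboundfornablaPtbyPtLp} to the $g$-factor. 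This BBBC-style argument, together with the Hamilton estimate that feeds it, is the real content of Theorem~\ref{thm:BesovUB}, and it is not captured by the phrase ``an integrated form of the pointwise gradient bound.''
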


We will divide the proof of Theorem \ref{eq:Besov-Sobolev} in two parts. In the first part, Theorem~\ref{thm:BesovLB}, we prove that $\mathbf{B}^{p,1/2}(X) \subset W^{1,p}(X)$. As we will see, this inclusion does not require the quasi Bakry-\'Emery condition~\eqref{eq:strong-BE}. In the second part, Theorem~\ref{thm:BesovUB} we will prove the inclusion $ W^{1,p}(X) \subset \mathbf{B}^{p,1/2}(X)$ and, to this end, will use the quasi Bakry-\'Emery condition. Before turning to the proof, we point out the following corollary regarding the Riesz transform.
\begin{corollary}\label{thm:RT}
Suppose that the quasi Bakry-\'Emery condition~\eqref{eq:strong-BE} holds. Let $p>1$. Then for any $f\in \mathbf B^{p,1/2}(X) \cap \mathcal F$,
\[
 \Vert  f\Vert_{p,1/2} \simeq \Vert \sqrt{-L} f \Vert_{L^p(X)}.
\]
Consequently, $\mathbf B^{p,1/2}(X) =\mathcal L_p^{1/2}$, where $\mathcal{L}_p^{1/2}$  is the domain of the 
operator $\sqrt{-L}$ in $L^p(X)$ (see {\rm \cite[Section 4.6]{ABCRST1}} for the definition).
\end{corollary}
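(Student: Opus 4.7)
The plan is to reduce Corollary~\ref{thm:RT} to two ingredients: the Besov--Sobolev identification of Theorem~\ref{eq:Besov-Sobolev}, and the $L^p$-boundedness of the Riesz transform associated with $L$. Under the quasi Bakry-\'Emery condition, Theorem~\ref{eq:Besov-Sobolev} gives
\[
\|f\|_{p,1/2}\simeq \||\nabla f|\|_{L^p(X)}
\]
for every $f\in \mathbf B^{p,1/2}(X)\cap \mathcal F$, so the first assertion is reduced to the Riesz transform equivalence $\||\nabla f|\|_{L^p(X)}\simeq \|\sqrt{-L}f\|_{L^p(X)}$ for $1<p<\infty$.

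For this equivalence I would invoke the Littlewood--Paley--Stein machinery for symmetric diffusion semigroups in the form developed in~\cite[Section 4.6]{ABCRST1}. By Stein's theorem applied to $P_t$, the norm $\|\sqrt{-L}f\|_{L^p(X)}$ is comparable to an $L^p$ vertical square function built from $P_t f$. The pointwise gradient bound $|\nabla P_t g|\le C P_t|\nabla g|$ supplied by the quasi Bakry-\'Emery condition then allows one to compare this vertical square function with the ``horizontal'' one built from $|\nabla f|$, yielding $\|\sqrt{-L}f\|_{L^p}\lesssim \||\nabla f|\|_{L^p}$. The reverse inequality is obtained by a dual square-function argument, again leveraging Stein's theorem together with the gradient bound.

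The identification $\mathbf B^{p,1/2}(X)=\mathcal L_p^{1/2}$ then follows by combining the above norm equivalence on the dense subspace $\mathcal F\cap L^p$ with the completeness of both spaces in their natural norms: the closure of $\mathcal F$ under $\|\cdot\|_{p,1/2}+\|\cdot\|_{L^p}$ is $\mathbf B^{p,1/2}(X)$, while under $\|\sqrt{-L}\cdot\|_{L^p}+\|\cdot\|_{L^p}$ it is $\mathcal L_p^{1/2}$ by definition, and the two norms are equivalent on the dense common core.

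The main obstacle I anticipate is the careful execution of the Littlewood--Paley estimates in the abstract Dirichlet framework, where pointwise gradient manipulations must be justified via the heat semigroup regularity supplied by the Gaussian bounds~\eqref{eq:heat-Gauss2} and the energy measure calculus rather than by a smooth differential structure. In particular, the pointwise nature of the quasi Bakry-\'Emery condition (rather than merely the weak form~\eqref{eq:weak-BE}) is essential to run the comparison of square functions in both directions, and one must check that the subordination-based representation of $\sqrt{-L}f$ converges in the correct $L^p$-sense for general $f\in\mathcal F$ rather than just for a smoother subclass.
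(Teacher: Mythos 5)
Your overall structure — reduce $\|f\|_{p,1/2}\simeq\|\sqrt{-L}f\|_{L^p}$ to (a) the Besov--Sobolev identification $\|f\|_{p,1/2}\simeq\||\nabla f|\|_{L^p}$ from Theorem~\ref{eq:Besov-Sobolev} and (b) the Riesz-transform norm equivalence $\||\nabla f|\|_{L^p}\simeq\|\sqrt{-L}f\|_{L^p}$, then combine — is exactly that of the paper. The difference lies entirely in how you treat step (b). The paper simply invokes \cite[Theorem~1.4]{ACDH}, which establishes precisely this two-sided Riesz transform bound for all $1<p<\infty$ under a pointwise gradient commutation estimate of the type in~\eqref{eq:strong-BE} together with Gaussian heat kernel bounds, so there is nothing left to prove. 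You instead propose to re-derive (b) from scratch via Littlewood--Paley--Stein square functions and the quasi Bakry-\'Emery bound. That is a plausible and classically-motivated strategy (essentially Bakry's original route on manifolds), but as written it is a sketch, not a proof, and it glosses over the part that is genuinely delicate: the direction $\||\nabla f|\|_{L^p}\lesssim\|\sqrt{-L}f\|_{L^p}$ for $p>2$ is the hard one, and comparing the vertical square function (built from $\partial_tP_tf=LP_tf$) with a horizontal one (built from $|\nabla P_tf|$) is not a one-line consequence of $|\nabla P_tf|\le CP_t|\nabla f|$; in the references this requires extra machinery (off-diagonal estimates, weighted square-function inequalities, or a Calder\'on--Zygmund-type decomposition, all of which is what \cite{ACDH} supplies). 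Attributing this to ``[Section~4.6]{ABCRST1}'' is also not quite right: that section defines $\mathcal{L}_p^{1/2}$ via subordination and gives the soft $1<p\le2$ continuity of $P_t$ on Besov scales, but it does not contain the $p>2$ Riesz transform bound you need. Your final density/completeness step for the identification $\mathbf B^{p,1/2}(X)=\mathcal{L}_p^{1/2}$ is fine and slightly more explicit than the paper's. In short, your proposal is the right outline, but step (b) should simply be cited rather than reconstructed, since your reconstruction leaves the substantive estimate unproved.
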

\begin{proof}
In view of  Theorem \ref{eq:Besov-Sobolev}, we have that for  any $f\in \mathbf B^{p,1/2}(X) $
\[
 \Vert  f\Vert_{p,1/2} \simeq \Vert |\nabla f |\Vert_{L^p(X)}.
\]
On the other hand, it follows from \cite[Theorem 1.4]{ACDH} that for  any $f\in \mathcal L_p^{1/2}$,
\[
 \Vert  \sqrt{-L} f  \Vert_{L^p(X)} \simeq \Vert |\nabla f |\Vert_{L^p(X)}.
\]
We conclude the proof by combining the above two facts.
\end{proof}
%

\subsubsection{$\mathbf{B}^{p,1/2}(X) \subset W^{1,p}(X)$}

%

\begin{theorem}\label{thm:BesovLB}
Let $p >1$. There exists a constant $C>0$ such that for every $ u \in \B^{p,1/2}(X)$,
\[
\| |\nabla u| \|_{L^p(X)} \le C \| u \|_{p,1/2}.
\]
\end{theorem}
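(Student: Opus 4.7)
The plan is to adapt the argument from the proof of Lemma~\ref{absolute continuity} with two modifications: replace the exponent $2$ by $p$ throughout, and substitute the $2$-Poincar\'e estimate on oscillations of $u$ with an estimate obtained directly from the Besov seminorm via Jensen's inequality. By Proposition~\ref{prop:B=B}, $\|u\|_{p,1/2}$ is comparable to the metric Besov seminorm $\|u\|_{B^1_{p,\infty}}$, so it suffices to dominate $\||\nabla u|\|_{L^p(X)}$ by the latter. For $\epsilon>0$, I fix a maximally separated $\epsilon$-covering $\{B_i^\epsilon\}_i$ and a subordinate Lipschitz partition of unity $\{\varphi_i^\epsilon\}_i$, and form the discrete convolution
\[
u_\epsilon = \sum_i u_{B_i^\epsilon}\, \varphi_i^\epsilon,
\]
which is locally Lipschitz and hence lies in $\mathcal{F}_{\mathrm{loc}}(X)$ with $\Gamma(u_\epsilon,u_\epsilon)\ll\mu$. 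A short calculation using Jensen's inequality and the bounded overlap of $\{B_i^{2\epsilon}\}_i$ shows that the discrete convolution is bounded on $L^p(X)$; combined with density of compactly supported continuous functions, this gives $u_\epsilon\to u$ in $L^p(X)$.

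The heart of the proof is the uniform gradient bound. Exactly the same chain of inequalities derived in the proof of Lemma~\ref{absolute continuity} yields, for $x,y\in B_j^\epsilon$,
\[
|u_\epsilon(x)-u_\epsilon(y)|
  \le \frac{C\, d(x,y)}{\epsilon} \sum_{i:\, B_i^{2\epsilon}\cap B_j^{2\epsilon}\ne\emptyset} |u_{B_i^\epsilon}-u_{B_j^{6\epsilon}}|.
\]
Rather than bounding each $|u_{B_i^\epsilon}-u_{B_j^{6\epsilon}}|$ using the $2$-Poincar\'e inequality, I raise to the $p$th power and apply Jensen twice (once to pass from a ball average to a point, once to replace $u_{B_j^{6\epsilon}}$ inside) together with volume doubling to obtain
\[
|u_{B_i^\epsilon}-u_{B_j^{6\epsilon}}|^p
 \le \frac{C}{\mu(B_j^{6\epsilon})^{2}} \int_{B_j^{6\epsilon}}\!\int_{B_j^{6\epsilon}} |u(y)-u(z)|^p\, d\mu(y)\,d\mu(z).
\]
Summing over $j$ using bounded overlap and doubling, and noting that $B_j^{6\epsilon}\subset B(x,12\epsilon)$ for $x\in B_j^{6\epsilon}$, gives the key estimate
\[
\int_X |\nabla u_\epsilon|^p\,d\mu
 \le \frac{C}{\epsilon^p}\int_X\!\int_{B(x,12\epsilon)} \frac{|u(y)-u(x)|^p}{\mu(B(x,12\epsilon))}\,d\mu(y)\,d\mu(x)
 \le C\,\|u\|_{B^1_{p,\infty}}^p \le C\,\|u\|_{p,1/2}^p.
\]

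Finally, I must pass to the limit. Since $p>1$, $L^p(X)$ is reflexive, so a subsequence $|\nabla u_{\epsilon_k}|$ converges weakly in $L^p$ to some $g$ with $\|g\|_{L^p}\le C\|u\|_{p,1/2}$. To identify $g$ with a valid length-of-gradient for $u$ and to verify $u\in\mathcal{F}_{\mathrm{loc}}$ with $\Gamma(u,u)\ll\mu$, I localize: for any ball $B$ and a Lipschitz cutoff $\psi$ of compact support equal to $1$ on $B$, the products $\psi u_\epsilon$ lie in $\mathcal{F}$ and are $\mathcal{E}_1$-bounded, so Mazur's lemma together with Hino's absolute continuity lemma (exactly as in the proof of Lemma~\ref{absolute continuity}) give $\psi u\in\mathcal{F}$ with $\Gamma(\psi u,\psi u)\ll\mu$; strict locality then yields $u\in\mathcal{F}_{\mathrm{loc}}$ with $\Gamma(u,u)\ll\mu$, and lower semicontinuity of the $L^p$-norm under weak $L^p$-convergence produces the bound $\||\nabla u|\|_{L^p}\le C\|u\|_{p,1/2}$. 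The main technical obstacle is precisely this identification step when $1<p<2$, because compact-support $L^p$-bounds do not automatically give the $L^2$-bounds needed to place $\psi u_\epsilon$ in $\mathcal{F}$; this is handled by first truncating $u$ at height $M$ (the truncation contracts both $L^p$ and the Besov seminorm), proving the bound for the bounded approximant, and then removing the truncation via monotone convergence.
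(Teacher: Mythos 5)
Your proposal follows essentially the same route as the paper: pass to the metric Besov seminorm via Proposition~\ref{prop:B=B}, build the discrete convolutions $u_\eps=\sum_i u_{B_i^\eps}\varphi_i^\eps$, bound the oscillation of $u_\eps$ on a ball $B_j^\eps$ by a telescoping sum over neighbouring balls, apply Jensen's inequality rather than Poincar\'e, sum over the covering using bounded overlap to obtain the uniform estimate $\sup_{\eps>0}\int_X|\nabla u_\eps|^p\,d\mu\le C\|u\|_{p,1/2}^p$, verify $u_\eps\to u$ in $L^p$, and then pass to the limit using reflexivity of $L^p$ and Mazur's lemma. That is precisely the structure of the paper's proof.

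The one place you diverge is the closing identification step, where you flag the concern that for $1<p<2$ a local $L^p$-bound on $\psi u_\eps$ and $|\nabla u_\eps|$ need not give the $L^2$-bound required to place $\psi u_\eps$ in $\mathcal{F}$ and to run Hino's lemma. This is a legitimate technical point, and the paper itself treats this limiting step very tersely (``a sequence of convex combinations of $u_{\eps_n}$ converges in the norm of $W^{1,p}(X)$''). However, the remedy you propose — truncate $u$ at height $M$ — does not close the gap: truncation gives $\|(u_M)_\eps\|_{L^\infty}\le M$ and hence uniform $L^2$-control of the function $\psi(u_M)_\eps$, but it does \emph{not} give any uniform $L^2$-control of $|\nabla (u_M)_\eps|$ on $\operatorname{supp}\psi$. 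The estimate you derive for the gradient is still only an $L^p$-estimate, so $\mathcal{E}(\psi(u_M)_\eps,\psi(u_M)_\eps)=\int|\nabla(\psi(u_M)_\eps)|^2\,d\mu$ is not bounded uniformly in $\eps$ when $p<2$, and the argument via reflexivity and Mazur in the Hilbert space $\mathcal{F}$ stalls at exactly the same place. If you want to carry out the localization rigorously for small $p$, you need a different idea — for instance establishing local higher integrability of $u$ from the Besov condition itself so that the $L^2$-bounds come for free, or working with the subadditivity $|\nabla\sum a_k u_{\eps_k}|\le\sum a_k|\nabla u_{\eps_k}|$ directly in $L^p$ and invoking a known closability/lower-semicontinuity statement for the length of gradient rather than trying to get into $\mathcal{F}$ through $L^2$. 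As it stands, the truncation step is a cul-de-sac, though the rest of the argument is sound and identical to the paper's.
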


\begin{proof}
Let $u\in\B^{p,1/2}(X)$. Then from Proposition \ref{prop:B=B}, we see that
for each $\eps>0$, 
\[
\frac{1}{\eps^p}\iint_{\Delta_\eps}\frac{|u(x)-u(y)|^p}{\mu(B(x,\eps))}\, d\mu(y)\, d\mu(x)\le \| u \|_{p,1/2}^p<\infty.
\]
Fix $\eps>0$. 
As in the proof of Lemma \ref{absolute continuity}, let $\{B_i^\eps=B(x_i^\eps,\eps)\}_i$ be a maximally separated $\eps$-covering (Definition~\ref{def:maxsepcover} and $\{\pip_i^\eps\}_i$ be a $(C/\eps)$-Lipschitz partition of unity subordinated to this covering. We also set 
\[
u_\eps:=\sum_i u_{B_i^\eps}\, \pip_i^\eps.
\]
 Then $u_\eps$ is locally Lipschitz and hence is in $\mathcal{F}_{\mathrm{loc}}(X)$. 
Indeed, for $x,y\in B_j^\eps$ we see that
\begin{align*}
|u_\eps(x)-u_\eps(y)|&\le \sum_{i:2B_i^\eps\cap 2B_j^\eps\ne\emptyset}|u_{B_i^\eps}-u_{B_j^\eps}||\pip_i^\eps(x)-\pip_i^\eps(y)|\\
 &\le \frac{C\, d(x,y)}{\eps} \sum_{i:2B_i^\eps\cap 2B_j^\eps\ne\emptyset}
    \left(\vint_{B_i^\eps}\vint_{B(x,2\eps)}|u(y)-u(x)|^p\, d\mu(y)\, d\mu(x)\right)^{1/p}.
\end{align*}
Therefore, by Lemma~\ref{lem:diff-of-Lip}, we see that
\begin{align*}
|\nabla u_\eps|&\le \frac{C}{\eps}\sum_{i:2B_i^\eps\cap 2B_j^\eps\ne\emptyset}
    \left(\vint_{B_i^\eps}\vint_{B(x,2\eps)}|u(y)-u(x)|^p\, d\mu(y)\, d\mu(x)\right)^{1/p}\\
    &\le C\left(\vint_{2B_j^\eps} \vint_{B(x,2\eps)}\frac{|u(y)-u(x)|^p}{\eps^p}\, d\mu(y)\, d\mu(x)\right)^{1/p},
\end{align*}
and so by the bounded overlap property of the collection $2B_j^\eps$,
\begin{align*}
\int_X|\nabla u_\eps|^p\, d\mu &\le \sum_j \int_{B_j^\eps}|\nabla u_\eps|^p\, d\mu\\
  &\le C\, \sum_j \int_{2B_j^\eps} \vint_{B(x,2\eps)}\frac{|u(y)-u(x)|^p}{\eps^p}\, d\mu(y)\, d\mu(x)\\
  &\le C\, \int_X \vint_{B(x,2\eps)}\frac{|u(y)-u(x)|^p}{\eps^p}\, d\mu(y)\, d\mu(x)\\
  &\le C\, \frac{1}{\eps^p}\int_{\Delta_{2\eps}}\frac{|u(x)-u(y)|^p}{\mu(B(x,\eps))}\, d\mu(y)\, d\mu(x)\le C\,\|u\|_{p,1/2}^p.
\end{align*}
Hence we have 
\begin{equation}\label{eq:sup-gradient}
\sup_{\eps>0} \int_X|\nabla u_\eps|^p\, d\mu \le C\,\|u\|_{p,1/2}^p.
\end{equation}
In a similar manner, we can also show that 
\[
\int_X|u_\eps(x)-u(x)|^p\, d\mu(x)\le C \eps^p \int_{\Delta_{2\eps}}\frac{|u(x)-u(y)|^p}{\eps^p\, \mu(B(x,\eps))}\, d\mu(y)\, d\mu(x)
  \le C\, \eps^p \,\|u\|_{p,1/2}^p,
\]
that is, $u_\eps\to u$ in $L^p(X)$ as $\eps\to 0^+$. 

Take a sequence $\eps_n\to 0^+$. From \eqref{eq:sup-gradient} and the reflexivity of $L^p(X)$, there exists a subsequence of $\{\nabla u_{\eps_n}\}_n$ that is weakly convergent in $L^p(X)$. By Mazur's lemma, a sequence of convex combinations of $u_{\eps_n}$ converges in the norm of $W^{1,p}(X)$. Since it converges to $u$ in $L^p(X)$, we conclude that $u\in W^{1,p}(X)$ and hence 
\begin{equation*}
\| |\nabla u| \|_{L^p(X)} \le C \| u \|_{p,1/2}.\qedhere
\end{equation*}
\end{proof}

\subsubsection{$W^{1,p}(X) \subset \mathbf{B}^{p,1/2}(X)$}

We now turn to the proof of the upper bound for the Besov seminorm in terms of the Sobolev seminorm and assume that the quasi Bakry-\'Emery condition~\eqref{eq:strong-BE} holds.

A first important corollary of the quasi Bakry-\'Emery estimate is the following Hamilton's type gradient estimate for the heat kernel. This type of estimate is well-known on Riemannian manifolds with non-negative Ricci curvature (see for instance  \cite{Kotschwar}), but is new in our general framework. 

\begin{theorem}\label{Hamilton estimate}
There exists a constant $C>0$ such that for every $t>0$, $x,y \in X$,
\[
| \nabla_x \ln p_t (x,y) |^2 \le \frac{C}{t} \left( 1 +\frac{d(x,y)^2}{t} \right).
\]
\end{theorem}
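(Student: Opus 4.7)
The strategy is to first upgrade the quasi Bakry-\'Emery condition~\eqref{eq:strong-BE} to a reverse logarithmic Sobolev inequality of the form
\begin{equation}\label{eq:revlogsob-plan}
\frac{|\nabla P_t f|^2}{P_t f} \le \frac{C}{t}\bigl[P_t(f\log f) - (P_t f)\log(P_t f)\bigr],
\end{equation}
valid for strictly positive $f$ in an appropriate domain, and then apply this inequality to the function $f(z) = p_{t/2}(z,y)$ at time $t/2$. The left-hand side then equals $|\nabla_x p_t(x,y)|^2/p_t(x,y)$, and the Gaussian bounds~\eqref{eq:heat-Gauss} together with volume doubling will suffice to control the right-hand side by $(C/t)(1 + d(x,y)^2/t)\, p_t(x,y)$. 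Dividing by $p_t(x,y)$ will give the claimed bound.

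To derive~\eqref{eq:revlogsob-plan}, I would consider $u(s) := P_s(\Phi(P_{t-s} f))$ with $\Phi(u) = u\log u$. Differentiating in $s$ and using the chain rule $L\Phi(v) = \Phi'(v)Lv + \Phi''(v)\Gamma(v,v)$ combined with strong locality yields
\[
\partial_s u(s) = P_s\!\left(\frac{|\nabla P_{t-s} f|^2}{P_{t-s} f}\right),
\]
so integrating in $s \in (0,t)$ produces the entropy dissipation identity $P_t(f\log f) - (P_t f)\log(P_t f) = \int_0^t P_s(|\nabla P_{t-s} f|^2/P_{t-s} f)\, ds$. On the other hand, writing $P_t f = P_s(P_{t-s} f)$ and applying~\eqref{eq:strong-BE} gives $|\nabla P_t f| \le C\, P_s(|\nabla P_{t-s} f|)$; a Cauchy-Schwarz on the positive Markov operator $P_s$ (using $P_s 1 = 1$) via the factorization $|\nabla P_{t-s} f| = (|\nabla P_{t-s} f|/\sqrt{P_{t-s} f})\sqrt{P_{t-s} f}$ then yields
\[
\frac{|\nabla P_t f|^2}{P_t f} \le C^2\, P_s\!\left(\frac{|\nabla P_{t-s} f|^2}{P_{t-s} f}\right)\quad\text{for each } s \in (0,t).
\]
Averaging in $s$ against $ds/t$ and combining with the dissipation identity gives~\eqref{eq:revlogsob-plan}.

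The second part is bookkeeping with Gaussian bounds. The pointwise inequality $g\log g \le g\log\|g\|_\infty$, valid for $0 \le g \le \|g\|_\infty$, together with the Gaussian upper bound $\|p_{t/2}(\cdot,y)\|_\infty \le C/\mu(B(y,\sqrt{t}))$ gives $P_{t/2}(f\log f)(x) \le p_t(x,y)\log(C/\mu(B(y,\sqrt{t})))$. The Gaussian lower bound supplies $-\log p_t(x,y) \le C + c_1 d(x,y)^2/t + \tfrac{1}{2}\log\mu(B(x,\sqrt{t})) + \tfrac{1}{2}\log\mu(B(y,\sqrt{t}))$. Upon dividing by $p_t(x,y)$ and summing, the $\log\mu(B(y,\sqrt{t}))$ contributions partially cancel, leaving the term $\tfrac{1}{2}\log(\mu(B(x,\sqrt{t}))/\mu(B(y,\sqrt{t})))$; volume doubling bounds this ratio by $C(1 + d(x,y)/\sqrt{t})^Q$, and the elementary estimate $\log(1+u) \le 1 + u^2$ for $u \ge 0$ converts this to $C(1 + d(x,y)^2/t)$, as required.

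The main obstacle will be justifying the semigroup calculus used to derive~\eqref{eq:revlogsob-plan}, since $\Phi(u) = u\log u$ is singular at $0$ and unbounded at $\infty$, and the Dirichlet-space setting lacks the pointwise smoothness available on manifolds. This will require approximating $\Phi$ by smooth bounded truncations and then passing to the limit, using the strict positivity and sharp two-sided Gaussian bounds~\eqref{eq:heat-Gauss} on $P_s f = p_{s + t/2}(\cdot, y)$ to justify dominated convergence at each step. Once~\eqref{eq:revlogsob-plan} is established, the remainder of the argument is routine.
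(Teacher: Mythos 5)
Your proposal is correct and follows essentially the same route as the paper: both derive a reverse logarithmic Sobolev inequality from the quasi Bakry-\'Emery condition via the entropy-dissipation identity (differentiating $s\mapsto P_s(\Phi(P_{t-s}f))$) and a Cauchy--Schwarz step for the Markov operator $P_s$, then apply it to the heat kernel and use the two-sided Gaussian estimates and volume doubling to reach the stated bound. The only difference is a technical regularization choice (you propose truncating $\Phi(u)=u\log u$, whereas the paper replaces $f$ by $f+\varepsilon$ and sends $\varepsilon\to 0$); both handle the same issue, and you correctly identify it as the main point requiring care.
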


\begin{proof}
The proof proceeds in two steps. 

\textbf{Step 1:}  We first collect a gradient bound for the heat kernel. Observe that \eqref{eq:strong-BE} implies a weaker $L^2$ version as follows
\[
 |\nabla P_t u|^2  \le C P_t (|\nabla u|^2), 
\]
and hence the following pointwise  heat kernel gradient bound (see \cite[Lemma 3.3]{ACDH}) holds:
\[
|\nabla_x p_t(x,y)| \le \frac{C}{\sqrt t} \frac{e^{-cd(x,y)^2/t}}{\sqrt{\mu(B(x,\sqrt t)) \mu(B(y,\sqrt t))}}. 
\]
In particular, we note that $|\nabla_x p_t(x,\cdot)| \in L^p(X)$ for every $p \ge 1$.

\

\textbf{Step 2:} In the second step, we prove a reverse log-Sobolev inequality for the heat kernel. 
Let $\tau,\ve>0$ and $x\in X$ be fixed.  We denote $u=p_\tau (x,\cdot)+\ve$. One has, from the chain rule for strictly local forms \cite[Lemma~3.2.5]{FOT},
\begin{align}
P_t (u \ln u)-P_t u \ln P_t u &=\int_0^t \partial_s \left( P_s (P_{t-s} u  \ln P_{t-s} u) \right) ds \notag\\
&=\int_0^t LP_s (P_{t-s} u \ln P_{t-s} u) -P_s (LP_{t-s}u \ln P_{t-s} u) -P_s(LP_{t-s}u)  ds \notag\\
&=\int_0^tP_s (L(P_{t-s} u \ln P_{t-s} u)) -P_s (LP_{t-s}u \ln P_{t-s} u) -P_s(LP_{t-s}u)  ds\notag\\
&=\int_0^tP_s \left[ L(P_{t-s} u \ln P_{t-s} u)) -LP_{t-s}u \ln P_{t-s} u -LP_{t-s}u \right]  ds \notag\\
&=\int_0^t   2P_s\left(\frac{| \nabla P_{t-s}u |^2 }{P_{t-s} u } \right)   ds, \label{eq:revlogsobeqn1}
 \end{align}
 where the above computations may be justified by using the Gaussian heat kernel estimates for the heat kernel and the Gaussian upper bound for the gradient of the heat kernel obtained in Step~1. In particular, we point out that the commutation $ LP_s (P_{t-s} u \ln P_{t-s} u) =P_s (L(P_{t-s} u \ln P_{t-s} u))$ is justified by noting that $P_{t-s} u \ln P_{t-s} u -\ve \ln \ve$ is in the domain of $L$ in $L^2(X,\mu)$.
Here, $L$ is the infinitesimal generator (the Laplacian operator) associated with $\mathcal{E}$. 
 
Using the Cauchy-Schwarz inequality in the form $P_s\left( \frac{f^2}{g}\right) \ge \frac{(P_sf)^2}{P_s g}$ and then the quasi Bakry-\'Emery estimate, we obtain from~\eqref{eq:revlogsobeqn1}
\begin{align*}
P_t (u \ln u)-P_t u \ln P_t u 
 & \ge 2\int_0^t  \frac{(P_s | \nabla P_{t-s} u |)^2}{  P_s(P_{t-s} u) }    ds \\
 &\ge \frac{1}{C} \frac{1}{P_t u} \int_0^t | \nabla P_t u|^2 ds \\
  &\ge \frac{t}{C}   \frac{1}{P_t u}  | \nabla P_t u|^2.
 \end{align*}

Coming back to the definition of $u$, noting that $P_t p_\tau(x,\cdot)=p_{t +\tau}(x,\cdot)$  and applying the previous inequality with $t=\tau$, we may set $M_t(x)=\sup_{y \in X} p_t (x,y) $ and bound the $P_t(u\ln u)$ term by $(P_t u)\ln (M_t+\epsilon)$ to deduce
\[
| \nabla_{y} \ln (p_{2t} (x,y)+\ve)|^2\le \frac{C}{t} P_t \left[  \ln \left( \frac{M_t(x)+\ve }{p_{2t} (x,\cdot) +\ve} \right) \right] (y).
\]
By letting $\ve \to 0$ and using the Gaussian heat kernel estimate, one concludes
\[
| \nabla_y \ln p_{2t} (x,y) |^2 \le \frac{C}{t} \left( 1 +\frac{d(x,y)^2}{t} \right).
\]
Our desired inequality follows by rescaling $t$, adjusting the constant $C$ and using the symmetry of $p_t(x,y)$ in $x$ and $y$ in the whole above argument.
\end{proof}

\begin{corollary}\label{cor:ptwiseboundfornablaPtbyPtLp}
Let $p>1$. There exists a constant $C>0$ such that for every $u \in L^p(X)$,
\[
| \nabla P_t u| \le \frac{C}{\sqrt{t}} (P_t |u|^p)^{1/p}.
\]
\end{corollary}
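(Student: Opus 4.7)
The plan is to obtain the pointwise bound by writing $\nabla P_t u(x)$ as an integral against $\nabla_x p_t(x,y)$, splitting the integrand as $|\nabla_x \ln p_t(x,y)|\cdot p_t(x,y)$, and applying H\"older's inequality so that $p_t(x,y)$ gets distributed between two factors whose exponents sum to $1$. Concretely, for $q=p/(p-1)$ the conjugate exponent,
\[
|\nabla P_t u(x)|
\le \int_X |\nabla_x p_t(x,y)|\,|u(y)|\,d\mu(y)
= \int_X |\nabla_x \ln p_t(x,y)|\, p_t(x,y)^{1/q}\, p_t(x,y)^{1/p}\,|u(y)|\,d\mu(y),
\]
and H\"older gives
\[
|\nabla P_t u(x)|^p
\le \left(\int_X |\nabla_x \ln p_t(x,y)|^{q}\, p_t(x,y)\,d\mu(y)\right)^{p/q}
\cdot (P_t |u|^p)(x).
\]
So the whole problem reduces to bounding the first factor by $C/t^{p/(2q)\cdot q/p \cdot p} = C/t^{p/2}$, i.e.\ showing
\[
\int_X |\nabla_x \ln p_t(x,y)|^{q}\, p_t(x,y)\,d\mu(y) \le \frac{C}{t^{q/2}}.
\]

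To establish this, I would plug in Hamilton's estimate from Theorem~\ref{Hamilton estimate}:
\[
|\nabla_x \ln p_t(x,y)|^{q} \le \frac{C}{t^{q/2}}\left(1+\frac{d(x,y)^2}{t}\right)^{q/2},
\]
and then use the Gaussian upper bound \eqref{eq:heat-Gauss2} for $p_t(x,y)$. Modulo the uniform constant $1/t^{q/2}$, what is left is
\[
\int_X \Bigl(1+\tfrac{d(x,y)^2}{t}\Bigr)^{q/2}\,\frac{e^{-c\,d(x,y)^2/t}}{\mu(B(x,\sqrt t))}\,d\mu(y).
\]
This is a textbook Gaussian-type integral on a doubling space: decompose $X$ into annuli $B(x,2^{i+1}\sqrt t)\setminus B(x,2^i\sqrt t)$ for $i\ge 0$ and into $B(x,\sqrt t)$. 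On the annulus of scale $2^i$ the numerator is $\le (1+4^i)^{q/2} e^{-c 4^i}$, while the doubling property \eqref{eq:mass-bounds} controls $\mu(B(x,2^{i+1}\sqrt t))/\mu(B(x,\sqrt t))\le C\,2^{iQ}$. The resulting geometric-type series $\sum_i (1+4^i)^{q/2}2^{iQ}e^{-c4^i}$ converges, giving a constant bound independent of $x$ and $t$.

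Raising the inequality for $|\nabla P_t u(x)|^p$ to the $1/p$ power yields $|\nabla P_t u|\le \frac{C}{\sqrt t}(P_t|u|^p)^{1/p}$ as required. The main obstacle is really contained in invoking Theorem~\ref{Hamilton estimate}; once that gradient log-heat-kernel estimate is available, the H\"older-plus-Gaussian-integration argument is fairly routine. A small technical point worth checking is that the differentiation under the integral sign is legitimate, which follows from the pointwise Gaussian bound on $|\nabla_x p_t(x,y)|$ established in Step~1 of the proof of Theorem~\ref{Hamilton estimate} (giving integrability of the dominating function in $y$), so that indeed $\nabla P_t u(x)=\int \nabla_x p_t(x,y)\,u(y)\,d\mu(y)$ for $u\in L^p(X)$.
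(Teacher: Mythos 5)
Your proof is essentially the same as the paper's: both apply H\"older's inequality with $p_t$ split between the two factors so as to produce $\bigl(\int_X |\nabla_x\ln p_t(x,y)|^q\,p_t(x,y)\,d\mu(y)\bigr)^{1/q}(P_t|u|^p)^{1/p}$, and then invoke Theorem~\ref{Hamilton estimate} together with the Gaussian upper bound to control the first factor. The only difference is that you spell out the annulus-decomposition Gaussian integral that the paper leaves to the reader, which is a correct and welcome addition of detail.
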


\begin{proof}
Let $p>1$, $q$  be the conjugate exponent and $u \in L^p(X)$. One has from H\"older's inequality
\begin{align*}
| \nabla P_t u| (x) & \le \int_X |\nabla _x p_t (x,y)| |u(y)| d\mu(y) \\
& \le \left( \int_X \frac{|\nabla _x p_t (x,y)|^q}{p_t(x,y)^{q/p}}  d\mu(y)\right)^{1/q} (P_t |u|^p)^{1/p} \\
&\le  \left( \int_X | \nabla_x \ln p_t(x,y)|^q p_t(x,y)  d\mu(y)\right)^{1/q} (P_t |u|^p)^{1/p}.
\end{align*}
The proof follows then from Theorem \ref{Hamilton estimate} and the Gaussian upper bound for the heat kernel.
\end{proof}

Note that by integrating over $X$ the previous proposition immediately yields:
\begin{lemma}\label{lem:LpGradient}
Let $p>1$. There exists a constant $C>0$ such that for every $u \in L^p(X)$
\[
\Vert |\nabla P_tu| \Vert_{L^p(X)}^2\le \frac{C}{t}\Vert u\Vert_{L^p(X)}^2.
\]
\end{lemma}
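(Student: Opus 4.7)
The proof should be essentially immediate from Corollary~\ref{cor:ptwiseboundfornablaPtbyPtLp}, and the parenthetical remark in the paper confirms this. My plan is to raise the pointwise estimate to the $p$-th power and integrate, using the conservativeness and symmetry of the heat semigroup to dispose of $P_t$ in the resulting integral.

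Specifically, starting from Corollary~\ref{cor:ptwiseboundfornablaPtbyPtLp}, for every $u\in L^p(X)$ we have the pointwise bound
\[
|\nabla P_t u|^p \le \frac{C^p}{t^{p/2}}\, P_t(|u|^p).
\]
Integrating over $X$ gives
\[
\int_X |\nabla P_t u|^p\,d\mu \le \frac{C^p}{t^{p/2}}\int_X P_t(|u|^p)\,d\mu.
\]
Recall that under our standing assumptions (doubling and $2$-Poincar\'e) the heat semigroup $\{P_t\}_{t\ge0}$ is conservative, i.e.\ $P_t1=1$, as noted in Section~2.4. Combined with the $L^2$-symmetry of $P_t$ (and its standard extension to the setting where one side is $L^1$ and the other is $L^\infty$), this yields $\int_X P_t f\,d\mu = \int_X f\,P_t1\,d\mu=\int_X f\,d\mu$ for any nonnegative $f\in L^1(X)$. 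Applying this with $f=|u|^p$ gives
\[
\int_X P_t(|u|^p)\,d\mu = \int_X |u|^p\,d\mu=\|u\|_{L^p(X)}^p,
\]
so that
\[
\| |\nabla P_t u| \|_{L^p(X)}^p \le \frac{C^p}{t^{p/2}}\|u\|_{L^p(X)}^p.
\]
Taking $p$-th roots and then squaring produces the claimed bound
\[
\| |\nabla P_t u| \|_{L^p(X)}^2 \le \frac{C^2}{t}\|u\|_{L^p(X)}^2,
\]
after absorbing the exponent into the constant.

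There is no real obstacle here; the only thing to be careful about is justifying that $\int_X P_t f\,d\mu=\int_X f\,d\mu$ for $f=|u|^p\in L^1(X)$, which we have just noted follows from conservativeness together with self-adjointness of $P_t$. The genuine work of this subsection lies entirely in Theorem~\ref{Hamilton estimate} and its Corollary~\ref{cor:ptwiseboundfornablaPtbyPtLp}; the present lemma is only a repackaging of the pointwise estimate into an $L^p$-operator norm bound.
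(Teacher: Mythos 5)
Your proof is correct and follows exactly the approach the paper intends; the paper dispenses with the argument in one line ("by integrating over $X$ the previous proposition immediately yields"), and you have simply written out the routine details of that integration, including the observation that conservativeness plus symmetry of the heat kernel gives $\int_X P_t f\,d\mu = \int_X f\,d\mu$ for nonnegative $f\in L^1(X)$, which is what makes the final step go through.
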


From this estimate we obtain the following result.
\begin{lemma}\label{lem:HSminusF1}
Let  $p > 1$. There exists a constant $C>0$ such that for every $u \in L^p(X)\cap\mathcal{F}$ with
$|\nabla u|\in L^p(X)$
\[
\| P_t u -u \|_{L^p(X)} \le C \sqrt{t} \| |  \nabla u | \|_{L^p(X)}.
\]
\end{lemma}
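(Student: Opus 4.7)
The plan is to mirror the argument of Lemma~\ref{lem:L1-norm-control}, replacing the weak Bakry-\'Emery gradient bound $\||\nabla P_s\varphi|\|_{L^\infty}\le Cs^{-1/2}\|\varphi\|_{L^\infty}$ used there by its $L^q$ counterpart furnished by Lemma~\ref{lem:LpGradient}, which was derived as a consequence of the quasi Bakry-\'Emery hypothesis via Corollary~\ref{cor:ptwiseboundfornablaPtbyPtLp}. Let $q$ be the conjugate exponent of $p$, and estimate $\|P_tu-u\|_{L^p(X)}$ by duality against test functions $\varphi\in\mathcal{F}\cap L^q(X)$ with compact support, which form a dense subclass of $L^q(X)$ since $1<q<\infty$.

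For such $\varphi$, the fundamental theorem of calculus applied to $s\mapsto P_s u(x)$, combined with the fact that $P_su$ solves the heat equation and the symmetry of the semigroup, gives
\begin{align*}
\Big| \int_X \varphi\,(P_tu-u)\, d\mu \Big|
&= \Big| \int_0^t \int_X \varphi\, L P_s u \, d\mu\, ds \Big| \\
&= \Big| \int_0^t \int_X d\Gamma(\varphi, P_s u)\, ds \Big| \\
&= \Big| \int_0^t \int_X d\Gamma(P_s\varphi, u)\, ds \Big|,
\end{align*}
where the last line uses $\mathcal{E}(\varphi,P_su)=\mathcal{E}(P_s\varphi,u)$ by self-adjointness of $P_s$. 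Applying the pointwise Cauchy-Schwarz inequality for $\Gamma$ (recalling $d\Gamma(f,f)=|\nabla f|^2d\mu$) and then H\"older's inequality with exponents $(q,p)$ gives
\[
\Big| \int_X d\Gamma(P_s\varphi,u)\Big|
\le \int_X |\nabla P_s\varphi|\, |\nabla u|\, d\mu
\le \||\nabla P_s\varphi|\|_{L^q(X)}\, \||\nabla u|\|_{L^p(X)}.
\]

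Invoking Lemma~\ref{lem:LpGradient} applied with exponent $q$ yields $\||\nabla P_s\varphi|\|_{L^q(X)}\le C s^{-1/2}\|\varphi\|_{L^q(X)}$; integrating the resulting bound from $s=0$ to $s=t$ produces the factor $\int_0^t s^{-1/2}\,ds=2\sqrt{t}$, so that
\[
\Big| \int_X \varphi\,(P_tu-u)\, d\mu \Big|
\le C\sqrt{t}\, \|\varphi\|_{L^q(X)}\, \||\nabla u|\|_{L^p(X)}.
\]
Taking the supremum over $\varphi$ in the dense subclass with $\|\varphi\|_{L^q(X)}\le 1$ gives the claimed inequality. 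The main point to verify carefully is the integration by parts identity in the second display, which requires that the function $P_su\in \mathcal{F}$ for $s>0$ (true since $P_s$ maps $L^2$ into the domain of $L\subset\mathcal{F}$) and that all integrals are absolutely convergent, for which the compact support of $\varphi$ together with $u\in\mathcal{F}$ suffices. Other than this standard verification, the argument is a clean duality reduction to the quasi Bakry-\'Emery consequence of Lemma~\ref{lem:LpGradient}.
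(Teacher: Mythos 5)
Your proposal follows the paper's proof essentially verbatim: the paper literally says to repeat the argument of Lemma~\ref{lem:L1-norm-control} with the test function $\varphi$ now in $\mathcal{F}\cap L^q(X)$ and compactly supported, with $q$ the conjugate of $p$, using H\"older plus Lemma~\ref{lem:LpGradient} in place of the $L^\infty$ bound coming from weak Bakry-\'Emery, and then concluding by $L^p$--$L^q$ duality. That is exactly what you do.

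One small technical point you gloss over: you write $\int_X\varphi(P_tu-u)\,d\mu=\int_0^t\int_X\varphi LP_su\,d\mu\,ds$ directly, but the FTC identity at the endpoint $s=0$ is not immediate since $u$ need not lie in $\Dom(L)$. The clean way to handle this (which Lemma~\ref{lem:L1-norm-control} uses explicitly) is to first fix $0<\tau<t$, obtain $\|P_tu-P_\tau u\|_{L^p}\le C\sqrt{t}\,\||\nabla u|\|_{L^p}$ by working on $[\tau,t]$ where everything is smooth, and then let $\tau\to 0^+$ using strong continuity of $P_\tau$ in $L^p$. Alternatively, one can argue as you implicitly do that the bound $\||\nabla P_s\varphi|\|_{L^q}\le Cs^{-1/2}\|\varphi\|_{L^q}$ makes the $s$-integral absolutely convergent on $(0,t)$, and then pass to the limit $\tau\to 0^+$ in the identity $\int_X\varphi(P_tu-P_\tau u)\,d\mu=\int_\tau^t\int_X d\Gamma(P_s\varphi,u)\,ds$ on both sides; this should be said explicitly, but it does not change the substance of the argument. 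Everything else (the symmetry step $\mathcal{E}(\varphi,P_su)=\mathcal{E}(P_s\varphi,u)$, pointwise Cauchy--Schwarz for $\Gamma$, H\"older, Lemma~\ref{lem:LpGradient}, and the duality at the end) matches the paper.
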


\begin{proof}
With the previous lemma in hand, the proof is similar to the one in Lemma \ref{lem:L1-norm-control}, with $\varphi$ in 
$\mathcal{F}\cap L^q(X)$ and compactly supported, where $p^{-1}+q^{-1}=1$.  As compactly supported functions in $\mathcal{F}\cap L^q(X)$ form a dense subclass of $L^q(X)$ we recover
the $L^p$-norm of $P_tu-u$ by taking the supremum over all such $\varphi$ with $\int_X|\varphi|^q\, d\mu\leq 1$.
\end{proof}

\begin{lemma}\label{lem:HSminusF2}
Let $p >1$, then for every $u \in L^p(X)\cap\mathcal{F}$ with
$|\nabla u|\in L^p(X)$
\[
\left( \int_X \int_X | P_tu(x)-u (y) |^p p_t (x,y) d\mu(x) d\mu(y) \right)^{1/p} \le C \sqrt{t}  \| |  \nabla u | \|_{L^p(X)}.
\]
\end{lemma}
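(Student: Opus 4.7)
My plan is first to reduce the statement to a Besov-type bound on $u$ via a Jensen inequality exploiting the conservativeness identity $P_tu(x)-u(y)=P_t(u-u(y))(x)$, and then to prove that Besov bound by a Gr\"onwall argument in the heat-kernel time variable, using Theorem~\ref{Hamilton estimate} to control the gradient of the kernel. Specifically, Jensen applied to the probability measure $p_t(x,\cdot)\,d\mu$ gives $|P_tu(x)-u(y)|^p\le P_t(|u-u(y)|^p)(x)$; integrating against $p_t(x,y)\,d\mu(x)\,d\mu(y)$ and applying Chapman--Kolmogorov $\int p_t(x,z)p_t(x,y)\,d\mu(x)=p_{2t}(z,y)$ reduces the claim to
\[
\iint |u(z)-u(y)|^p\, p_{2t}(z,y)\,d\mu(z)\,d\mu(y)\;\le\; C t^{p/2}\,\| |\nabla u| \|_{L^p(X)}^p.
\]

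To prove this Besov bound, I would set $F(s):=\iint |u(x)-u(y)|^p p_s(x,y)\,d\mu(x)\,d\mu(y)$ and differentiate in $s$. Using $\partial_s p_s=L_x p_s$, integration by parts in the $x$-variable (so that $\int \phi\, Lf\,d\mu=-\int d\Gamma(\phi,f)$), and the chain rule for $\Gamma$ (applicable because $r\mapsto|r|^p$ lies in $C^1(\mathbb R)$ for $p>1$) yields
\[
F'(s)=-p\iint |u(x)-u(y)|^{p-2}(u(x)-u(y))\,\nabla_x u(x)\cdot\nabla_x p_s(x,y)\,d\mu(x)\,d\mu(y).
\]
Theorem~\ref{Hamilton estimate} gives $|\nabla_x p_s(x,y)|\le Cs^{-1/2}p_s(x,y)\sqrt{1+d(x,y)^2/s}$, and then H\"older with conjugate exponents $(p/(p-1),p)$ combined with the uniform bound $\int(1+d(x,y)^2/s)^{p/2}p_s(x,y)\,d\mu(y)\le C$ (obtained by splitting along dyadic annuli in $d(x,y)/\sqrt s$ and using the Gaussian upper bound together with doubling) produces $|F'(s)|\le Cs^{-1/2}F(s)^{(p-1)/p}\,\| |\nabla u| \|_{L^p(X)}$. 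Setting $G:=F^{1/p}$ converts this into $G'(s)\le Cs^{-1/2}\,\| |\nabla u| \|_{L^p(X)}$; integrating from $0$ (where $G(0)=0$ follows by first checking $F_v(s)\to 0$ for continuous $v$ since $p_s\to\delta$, then approximating $u\in L^p$ by such $v$ using the trivial estimate $F_{u-v}(s)\le 2^{p+1}\|u-v\|_{L^p(X)}^p$) gives $G(s)\le C\sqrt s\,\| |\nabla u| \|_{L^p(X)}$, completing the proof when combined with the reduction.

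The principal obstacle is justifying the chain rule and the integration by parts with full rigor, especially for $1<p<2$, where $r\mapsto|r|^p$ is only $C^1$ with a singular formal second derivative at the origin; even for $p\ge 2$ the global membership $|u-u(y)|^p\in\mathcal F$ is not automatic from the hypothesis $u\in L^p\cap\mathcal F$ with $|\nabla u|\in L^p$. I would handle this by regularizing $|r|^p$ to $(\varepsilon^2+r^2)^{p/2}$ and inserting bounded truncations on $u$, deriving the differential inequality for the regularized quantities, and then passing to the limits $\varepsilon\to 0^+$ and the truncations tending to infinity. Differentiation of $F(s)$ under the double integral is justified throughout by dominated convergence, using the Gaussian heat-kernel bounds to supply the integrable majorants.
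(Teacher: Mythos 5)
Your proposal is correct in outline, but it takes a genuinely different route from the paper, so let me compare. The paper argues by duality: fixing $x$ and taking $g\in L^\infty$ with $P_t(|g|^q)(x)\le 1$ (where $q$ is the conjugate exponent), it computes $\partial_s\bigl[P_s\bigl((P_{t-s}u)(P_{t-s}g)\bigr)(x)\bigr]=2P_s\bigl(\Gamma(P_{t-s}u,P_{t-s}g)\bigr)(x)$ via the Leibniz rule, then bounds the $\Gamma$-term using quasi Bakry-\'Emery (for the $u$-factor) and Corollary~\ref{cor:ptwiseboundfornablaPtbyPtLp} (for the $g$-factor), and integrates in $s$ before invoking $L^p$--$L^q$ duality. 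Crucially, this computation is \emph{bilinear} in $u$ and $g$, so it never needs a chain rule applied to the nonlinearity $r\mapsto |r|^p$ --- the exponent $p$ enters only through H\"older and duality. Your Gr\"onwall argument on $F(s)=\iint|u(x)-u(y)|^p\,p_s(x,y)\,d\mu\,d\mu$ is structurally elegant (and in fact, once you establish $F(s)^{1/p}\le C\sqrt s\,\||\nabla u|\|_{L^p}$ for all $s$, you have directly proved Theorem~\ref{thm:BesovUB}, so your initial Jensen/Chapman--Kolmogorov reduction is technically superfluous --- it only shows the lemma follows from a result you then prove from scratch). But it pays a real price: the integration by parts $\int|u-u(y)|^p L_x p_s\,d\mu=-\int d\Gamma(|u-u(y)|^p,p_s(\cdot,y))$ and the chain rule require $|u-u(y)|^p\in\mathcal F$ (not merely $\mathcal F_{\mathrm{loc}}$ with some integrability), which is not automatic under the hypotheses, and the chain rule with $\eta(r)=|r|^p$ is delicate for $1<p<2$. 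You correctly diagnose this and propose $(\varepsilon^2+r^2)^{p/2}$ plus bounded truncations of $u$; that program is viable but adds a nontrivial layer of technicalities (one must verify that the differential inequality survives both limits uniformly in $s$ and that the Cauchy--Schwarz bound $|\Gamma(u,p_s(\cdot,y))|\le|\nabla u|\,|\nabla_x p_s(\cdot,y)|$ is available for $p_s(\cdot,y)$, which here follows because the quasi Bakry-\'Emery estimate gives absolute continuity of $\Gamma(p_s(\cdot,y),p_s(\cdot,y))$). The paper's duality route sidesteps all of this, at the cost of being less direct and relying on the auxiliary Corollary~\ref{cor:ptwiseboundfornablaPtbyPtLp}. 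Both methods invoke the quasi Bakry-\'Emery condition, yours through Theorem~\ref{Hamilton estimate} and the paper's through both Theorem~\ref{Hamilton estimate} (in the $g$-estimate) and the estimate $P_s(|\nabla P_{t-s}u|^p)^{1/p}\le CP_t(|\nabla u|^p)^{1/p}$; the moment bound $\int(1+d(x,y)^2/s)^{p/2}p_s(x,y)\,d\mu(y)\le C$ that you need does indeed follow from the Gaussian upper bound and doubling exactly as you describe.
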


\begin{proof}
 Let $u\in L^p(X)$ and $t >0 $ be fixed in the above argument. By an application of Fubini's theorem we have
\[
\left( \int_X \int_X | P_tu(x)-u (y) |^p p_t (x,y) d\mu(x) d\mu(y) \right)^{1/p}=\left( \int_X P_t ( | P_t u(x) -u|^p)(x) d\mu(x)\right)^{1/p}.
\]

The main idea now is to  adapt the proof of~\cite[Theorem~6.2]{BBBC}.
As above, let $q$ be the conjugate of $p$. Let $x \in X$ be fixed. Let $g$ be a function in $L^\infty(X)$  such that $P_t (|g|^q)(x) \le 1$. 

We first note that from the chain rule:
\begin{align*}
& \partial_s \left[ P_s  ((P_{t-s} u) (P_{t-s} g))(x) \right]  \\
=& LP_s  ((P_{t-s} u) (P_{t-s} g))(x) -P_s  ((LP_{t-s} u) (P_{t-s} g))(x) -P_s  ((P_{t-s} u) (LP_{t-s} g))(x) \\
=&P_s  (L(P_{t-s} u) (P_{t-s} g))(x) -P_s  ((LP_{t-s} u) (P_{t-s} g))(x) -P_s  ((P_{t-s} u) (LP_{t-s} g))(x)  \\
=&2P_s (\Gamma(P_{t-s}u,P_{t-s}g)).
\end{align*}

Therefore we have
\begin{align*}
P_t ( ( u-P_t u(x) ) g)(x) & =P_t (ug)(x)-P_t u(x) P_t g (x) \\
 &=\int_0^t \partial_s \left[ P_s  ((P_{t-s} u) (P_{t-s} g))(x) \right] ds \\
 &=2 \int_0^t P_s \left( \Gamma(P_{t-s} u,P_{t-s} g)  \right) (x) ds \\
 &\le 2 \int_0^t P_s \left( | \nabla P_{t-s} u | | \nabla P_{t-s} g| )  \right) (x) ds \\
 &\le 2 \int_0^t P_s \left( | \nabla P_{t-s} u |^p \right)^{1/p}(x) P_s\left(  | \nabla P_{t-s} g|^q  \right)^{1/q} (x) ds.
\end{align*}
Now from the strong  Bakry-\'Emery estimate and H\"older's inequality we have
\[
P_s \left( | \nabla P_{t-s} u |^p \right)^{1/p}(x) \le  CP_s  \left( P_{t-s}( | \nabla  u |^p) \right)^{1/p}(x) =C P_t ( | \nabla  u |^p)^{1/p}(x).
\]
On the other hand, Corollary~\ref{cor:ptwiseboundfornablaPtbyPtLp} gives
\[
 | \nabla P_{t-s} g|^q\le \frac{C}{(t-s)^{q/2}} P_{t-s} (|g|^q).
 \]
 Thus,
 \[
 P_s\left(  | \nabla P_{t-s} g|^q  \right)^{1/q} (x) \le \frac{C}{(t-s)^{1/2}} P_{t} (|g|^q)^{1/q}(x) \le \frac{C}{(t-s)^{1/2}} .
 \]
 One concludes
 \[
 P_t ( ( u-P_t u(x) ) g)(x) \le C \sqrt{t} P_t ( | \nabla  u |^p)^{1/p}(x).
 \]
 Thus by $L^p-L^q$ duality in $(X,p_t(x,y)\mu(dy))$, one concludes
 \[
  P_t ( | u-P_t u(x)| ^p)(x)^{1/p} \le C \sqrt{t} P_t ( | \nabla  u |^p)^{1/p}(x) 
 \]
 and finishes the proof by integration over $X$.
\end{proof}

We are finally in a position to prove the inclusion of the Sobolev space $W^{1,p}(X)$ into the Besov class $\mathbf{B}^{p,1/2}$, which in turn completes the proof of Theorem~\ref{eq:Besov-Sobolev}, which is the main result of this section.

\begin{theorem}\label{thm:BesovUB}
Let $p>1$. There exists a constant $C>0$ such that for every  $u \in W^{1,p}(X)$,
\[
\| u \|_{p,1/2} \le C \| |  \nabla u | \|_{L^p(X)}.
\]
\end{theorem}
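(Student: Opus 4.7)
The proof should be a straightforward consequence of the two preceding Lemmas~\ref{lem:HSminusF1} and \ref{lem:HSminusF2}, combined via a triangle inequality that inserts $P_t u(x)$ between $u(x)$ and $u(y)$. The plan is to estimate $\|u\|_{p,1/2}^p = \sup_{t>0} t^{-p/2} \int_X\int_X p_t(x,y) |u(x)-u(y)|^p\, d\mu(x)\, d\mu(y)$ by splitting the integrand through the elementary bound
\[
|u(x)-u(y)|^p \le 2^{p-1}\bigl( |u(x) - P_t u(x)|^p + |P_t u(x) - u(y)|^p \bigr).
\]

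First, I would handle the first piece: since the heat semigroup is conservative ($P_t 1 = 1$, recalled in Section~2.4 from~\cite{St-II, St-III}), we get
\[
\int_X \int_X p_t(x,y) |u(x) - P_t u(x)|^p\, d\mu(y)\, d\mu(x) = \|P_t u - u\|_{L^p(X)}^p \le C t^{p/2} \||\nabla u|\|_{L^p(X)}^p
\]
by Lemma~\ref{lem:HSminusF1}. Second, for the other piece, Lemma~\ref{lem:HSminusF2} gives directly
\[
\int_X \int_X p_t(x,y) |P_t u(x) - u(y)|^p\, d\mu(x)\, d\mu(y) \le C t^{p/2} \||\nabla u|\|_{L^p(X)}^p.
\]
Adding these, dividing by $t^{p/2}$, taking the $p$-th root and then the supremum over $t > 0$ yields the claimed bound.

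The one technical point to verify is that $u \in W^{1,p}(X)$ as defined in~\eqref{definition:W1p} is admissible in Lemmas~\ref{lem:HSminusF1} and~\ref{lem:HSminusF2}, whose proofs assume $u \in L^p(X) \cap \mathcal{F}$ with $|\nabla u| \in L^p(X)$. For $p \ge 2$ this is essentially automatic from $L^p \hookrightarrow L^2_{\text{loc}}$ combined with the Dirichlet-form identity $\mathcal{E}(u,u) = \int |\nabla u|^2 d\mu$, but for $1 < p < 2$ one needs a truncation or cut-off approximation argument: for $R>0$ take $\eta_R$ a Lipschitz cut-off equal to $1$ on $B(x_0,R)$ and use strict locality together with the chain rule to see that $\eta_R u \in \mathcal{F} \cap L^p(X)$ with $\Gamma(\eta_R u,\eta_R u) \ll \mu$, apply Lemmas~\ref{lem:HSminusF1} and~\ref{lem:HSminusF2} to $\eta_R u$, and then send $R \to \infty$ using dominated convergence (with $|\nabla u| \in L^p$ as the majorant). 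I expect this approximation to be the only non-routine part of the write-up; once it is in place the two-term triangle-inequality argument gives the bound with a constant depending only on $p$ and the constants in Lemmas~\ref{lem:HSminusF1} and~\ref{lem:HSminusF2}.
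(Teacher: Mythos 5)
Your decomposition through $P_tu(x)$ and the use of Lemmas~\ref{lem:HSminusF1} and~\ref{lem:HSminusF2}, followed by a cutoff argument to pass from $L^p\cap\mathcal{F}$ to $W^{1,p}(X)$, is exactly the paper's proof; the only cosmetic difference is that you split $|u(x)-u(y)|^p$ via the pointwise inequality with the factor $2^{p-1}$, where the paper uses Minkowski's inequality on the $L^p(p_t\,d\mu\otimes d\mu)$ norms, which amounts to the same thing.

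One small error: your claim that the reduction to $\mathcal{F}$ is ``essentially automatic for $p\ge 2$'' is not correct. The embedding $L^p\hookrightarrow L^2_{\mathrm{loc}}$ gives $u\in L^2_{\mathrm{loc}}(X)$ but not $u\in L^2(X)$, and likewise $|\nabla u|\in L^p(X)$ for $p>2$ does not give $|\nabla u|\in L^2(X)$ when $\mu(X)=\infty$; so you cannot conclude $u\in\mathcal{F}$ without a cutoff. The cutoff approximation (with $h_n u\in\mathcal{F}$, the Leibniz rule to bound $|\nabla(h_n u)|\le h_n|\nabla u|+\tfrac{2}{n}|u|$, and Fatou for the Besov seminorm) is needed for every $p>1$, which is what the paper does; your proposal would be correct if you simply ran that cutoff argument uniformly in $p$ rather than singling out $1<p<2$.
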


\begin{proof}
We first assume $u \in L^p(X)\cap \mathcal{F}$ with $|\nabla u| \in L^p(X)$.
One has
\begin{align*}
 \lefteqn{\left( \int_X \int_X | u(x)-u(y)|^p p_t (x,y) d\mu(x) d\mu(y) \right)^{1/p} }\quad&  \\
 &\leq  \left( \int_X \int_X | u(x)-P_tu (x) |^p p_t (x,y) d\mu(x) d\mu(y) \right)^{1/p} + \left( \int_X \int_X | P_tu(x)-u(y)|^p p_t (x,y) d\mu(x) d\mu(y) \right)^{1/p}   \\
 &\leq  \| P_t u -u \|_{L^p(X)} +  \left( \int_X \int_X | P_tu(x)-u (y) |^p p_t (x,y) d\mu(x) d\mu(y) \right)^{1/p}\\
 & \leq  2 C \sqrt{t} \| |  \nabla u | \|_{L^p(X)},
\end{align*}
where in the last step we applied Lemma~\ref{lem:HSminusF1} to the first term and Lemma~\ref{lem:HSminusF2} to the second term. Thus
 \[
\| u \|_{p,1/2} \le C \| |  \nabla u | \|_{L^p(X)}.
\]
Now let $u \in W^{1,p}(X)$ and choose an increasing sequence
of functions $\phi_n\in C^\infty([0,\infty))$ such that
$\phi_n\equiv 1$ on $[0,n]$, $\phi_n \equiv 0$ outside $[0,2n]$, and
$|\phi_n'|\le \frac{2}{n}$. Let $x_0 \in X$. If  $h_n(x) = \phi_n(d(x_0,x))$
then $h_nu \in \mathcal{F}$, $h_n\nearrow1$ on $X$ as
$n\to \infty$, and $ \| |  \nabla (h_n u) | \|_{L^p(X)} \to \| |  \nabla  u | \|_{L^p(X)}$. Taking the limit in the inequality 
 \[
\| h_n u \|_{p,1/2} \le C \| |  \nabla (h_n u) | \|_{L^p(X)}
\]
yields the result.
\end{proof}

\subsection{Continuity of $P_t$ in the Besov spaces and  critical exponents}

We first note the following straightforward continuity property of $P_t$ in the Besov spaces.

\begin{proposition}\label{continuity Pt}
Suppose that the quasi Bakry-\'Emery condition~\eqref{eq:strong-BE} holds. Let $p>1$. There exists a constant $C_p>0$ such that for every $f\in L^p(X, \mu)$ and $t>0$
\[
\Vert P_t f \Vert_{p,1/2} \le \frac{C_p}{t^{1/2}} \Vert f \Vert_{L^p(X)}.
\]
\end{proposition}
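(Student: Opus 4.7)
The statement follows by composing two ingredients already established in the paper: the gradient bound on the heat semigroup that comes out of the quasi Bakry-\'Emery estimate, and the embedding $W^{1,p}(X)\hookrightarrow \mathbf B^{p,1/2}(X)$. Concretely, I would first argue that for each fixed $t>0$ and each $f\in L^p(X)$ the function $P_tf$ lies in $W^{1,p}(X)$. Membership in $L^p(X)$ follows from the $L^p$-contractivity of $\{P_t\}$, membership in $\mathcal F_{\mathrm{loc}}(X)$ with $\Gamma(P_tf,P_tf)\ll\mu$ follows from the pointwise heat kernel gradient bound produced in the proof of Corollary \ref{cor:ptwiseboundfornablaPtbyPtLp}, and the $L^p$-integrability of $|\nabla P_tf|$ together with the quantitative bound
\[
\bigl\Vert |\nabla P_tf|\bigr\Vert_{L^p(X)}\le \frac{C}{\sqrt t}\,\Vert f\Vert_{L^p(X)}
\]
is precisely Lemma~\ref{lem:LpGradient} (which is itself an integrated consequence of Corollary \ref{cor:ptwiseboundfornablaPtbyPtLp}).

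Once $P_tf\in W^{1,p}(X)$ has been verified, I would apply Theorem~\ref{thm:BesovUB} to $u=P_tf$ to obtain
\[
\Vert P_tf\Vert_{p,1/2}\le C\,\bigl\Vert |\nabla P_tf|\bigr\Vert_{L^p(X)},
\]
and then combine this with the previous display to conclude
\[
\Vert P_tf\Vert_{p,1/2}\le \frac{C_p}{\sqrt t}\,\Vert f\Vert_{L^p(X)},
\]
which is the claimed bound.

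\textbf{Possible obstacles.} The only subtlety I expect is verifying the regularity needed to invoke Theorem~\ref{thm:BesovUB} when $f$ is merely in $L^p(X)$ rather than in $L^p(X)\cap\mathcal F$. If this causes difficulty, I would handle it by a density/approximation argument: approximate $f$ in $L^p(X)$ by a sequence $f_n\in L^p(X)\cap\mathcal F$ (for instance bounded compactly supported functions), apply the inequality to each $P_tf_n$, and pass to the limit using the $L^p$-contractivity of $P_t$ together with Fatou's lemma on the Besov double integral, noting that $p_t(x,y)$ is a fixed positive kernel for each $t>0$. The quasi Bakry-\'Emery hypothesis enters only indirectly, through its role in producing Theorem~\ref{eq:Besov-Sobolev}/Theorem~\ref{thm:BesovUB} and Lemma~\ref{lem:LpGradient}; beyond that the present statement is essentially a bookkeeping corollary.
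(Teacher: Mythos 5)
Your plan is exactly the paper's proof: the paper deduces Proposition~\ref{continuity Pt} directly from Lemma~\ref{lem:LpGradient} and Theorem~\ref{thm:BesovUB}, just as you do. The extra care you take in checking $P_tf\in W^{1,p}(X)$ and in sketching a density argument is sensible but not a departure from the paper's route.
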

\begin{proof}
This is a consequence of Lemma \ref{lem:LpGradient} and Theorem \ref{thm:BesovUB}.
\end{proof}
\begin{remark}
The above result is true without the quasi Bakry-\'Emery condition for $1<p\le 2$ on very general Dirichlet spaces, see \cite[Theorem 5.1]{ABCRST1}.
\end{remark}

For $p \ge 1$, as in \cite{ABCRST1}, we define the $L^p$ Besov density critical exponent $\alpha_p^*(X)$ and triviality critical exponent $\alpha_p^\#(X)$ as follows:
\begin{equation*}
\alpha_p^*(X) = \sup \{ \alpha>0\,:\, \mathbf{B}^{p,\alpha}(X) \text{ is dense in } L^p(X)\},
\end{equation*}
\[
\alpha^\#_p(X)=\sup \{ \alpha >0\, :\, \mathbf{B}^{p,\alpha}(X) \text{ contains non-constant functions} \}.
\]

\begin{theorem}
Suppose that the weak Bakry-\'Emery condition~\eqref{eq:weak-BE} holds, then for $1 \le p \le 2$,
\[
\alpha_p^*(X)=\alpha_p^\#(X)=\frac{1}{2}.
\]
Furthermore, if the quasi Bakry-\'Emery condition~\eqref{eq:strong-BE} holds, then for every $p > 2$,
\[
\alpha_p^*(X)=\alpha_p^\#(X)=\frac{1}{2}.
\]
\end{theorem}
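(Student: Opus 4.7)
The plan is to establish separately the inequalities $\alpha_p^*(X) \ge 1/2$ and $\alpha_p^\#(X) \le 1/2$; equality of the two quantities then follows from the obvious $\alpha_p^*(X) \le \alpha_p^\#(X)$.

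\textbf{Density.} For $\alpha_p^*(X) \ge 1/2$ I would show that $\mathbf{B}^{p,1/2}(X)$ is dense in $L^p(X)$. When $p=1$, Theorem~\ref{thm:W=BV} identifies $\mathbf{B}^{1,1/2}(X)$ with $BV(X)$, which contains every compactly supported locally Lipschitz function and is therefore dense in $L^1(X)$. When $1<p\le 2$, the remark following Proposition~\ref{continuity Pt} gives $\|P_tf\|_{p,1/2}\le C_p t^{-1/2}\|f\|_{L^p}$ on general Dirichlet spaces without any Bakry--\'Emery assumption, so $P_tf\in\mathbf{B}^{p,1/2}(X)$ for every $f\in L^p(X)$; since $P_tf\to f$ in $L^p$ as $t\to 0^+$, density follows. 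For $p>2$ the same argument applies using Proposition~\ref{continuity Pt} directly under the quasi Bakry--\'Emery condition.

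\textbf{Triviality.} For $\alpha_p^\#(X) \le 1/2$ I fix $\alpha>1/2$ and $u\in\mathbf{B}^{p,\alpha}(X)$ and want to show $u$ is constant. The driving observation is the elementary rescaling
\[
s^{-1/2}\left(\int_X\int_X p_s(x,y)|u(x)-u(y)|^p\,d\mu(x)d\mu(y)\right)^{1/p}\le s^{\alpha-1/2}\|u\|_{p,\alpha}\longrightarrow 0\text{ as }s\to 0^+,
\]
so the quantities controlling the scale-$1/2$ seminorm vanish as $s\to 0^+$. When $p=1$, combining this with Theorem~\ref{thm:W=BV} gives $\|Du\|(X)\le C\liminf_{s\to 0^+}s^{-1/2}\int_X P_s(|u-u(y)|)(y)\,d\mu(y)=0$, and Remark~\ref{constancy 1} (which relies on the weak Bakry--\'Emery condition) then forces $u$ to be constant. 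When $p>1$, I would use the metric Besov equivalence of Proposition~\ref{prop:B=B} to re-express the hypothesis as $\int_{\Delta_{2\eps}}|u(x)-u(y)|^p/\mu(B(x,\eps))\,d\mu(x)d\mu(y)\le C\eps^{2\alpha p}\|u\|_{p,\alpha}^p$, then reuse the averaging construction $u_\eps=\sum_i u_{B_i^\eps}\pip_i^\eps$ from the proofs of Lemma~\ref{absolute continuity} and Theorem~\ref{thm:BesovLB}. That construction produces locally Lipschitz approximations satisfying $\int_X|\nabla u_\eps|^p\,d\mu\le C\eps^{(2\alpha-1)p}\|u\|_{p,\alpha}^p\to 0$ while $u_\eps\to u$ in $L^p(X)$; a Mazur-lemma argument on a subsequence (as in those same proofs) then exhibits $u$ as a $W^{1,p}$-norm limit with vanishing gradient, and the 2-Poincar\'e inequality together with connectedness of $X$ forces $u$ to be constant.

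\textbf{Main obstacle.} The delicate step is the $p=1$ triviality claim: without a 1-Poincar\'e inequality a BV function with zero total variation need not be constant, so the argument has to appeal to Remark~\ref{constancy 1}, which is itself a consequence of the weak Bakry--\'Emery condition via Proposition~\ref{prop:B=B}. For $p>1$ the triviality part is cleaner because Theorem~\ref{thm:BesovLB} already provides the embedding $\mathbf{B}^{p,1/2}(X)\hookrightarrow W^{1,p}(X)$ without needing any Bakry--\'Emery hypothesis, reducing the constancy to classical Newton--Sobolev theory under the 2-Poincar\'e inequality.
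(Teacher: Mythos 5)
Your proof is correct, and the overall architecture (prove $\alpha_p^*\ge 1/2$ by density of a subspace and $\alpha_p^\#\le 1/2$ by a triviality argument, then use $\alpha_p^*\le\alpha_p^\#$) matches the paper's. The difference lies in how you handle the range $p>1$. The paper proves $\alpha_1^\#\le 1/2$ exactly as you do (via $BV$, the coarea/heat-kernel identity and Remark~\ref{constancy 1}), but then \emph{stops}: it invokes~\cite[Proposition~5.6]{ABCRST1}, which asserts that $p\mapsto\alpha_p^*$ and $p\mapsto\alpha_p^\#$ are non-increasing and that $\alpha_p^*\ge 1/2$ for $1\le p\le 2$, and uses monotonicity alone to conclude for all $p\ge 1$ (for $p>2$ adding the density supplied by Proposition~\ref{continuity Pt}). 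Your argument for $p>1$ is instead a direct one: you translate $u\in\mathbf{B}^{p,\alpha}(X)$ into the metric Besov bound via Proposition~\ref{prop:B=B}, feed it into the discrete-convolution construction $u_\eps=\sum_i u_{B_i^\eps}\pip_i^\eps$ from Lemma~\ref{absolute continuity}/Theorem~\ref{thm:BesovLB}, obtain $\int_X|\nabla u_\eps|^p\,d\mu\le C\eps^{(2\alpha-1)p}\|u\|_{p,\alpha}^p\to 0$, pass to a $W^{1,p}$ limit by Mazur, and invoke $2$-Poincar\'e to force $u$ to be constant. This is a legitimate alternative route, and it is more self-contained: it relies on tools developed in this paper rather than on the monotonicity statements imported wholesale from~\cite{ABCRST1}. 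It also makes explicit that no Bakry--\'Emery hypothesis is needed for the triviality inequality $\alpha_p^\#\le 1/2$ when $p>1$, whereas the paper's argument routes this through $\alpha_1^\#$ (which does use weak Bakry--\'Emery). The trade-off is length; the paper's monotonicity argument is one line once Proposition 5.6 of~\cite{ABCRST1} is accepted. One small remark: for the $p=1$ density you argue through the identification $\mathbf{B}^{1,1/2}(X)=BV(X)$ (Theorem~\ref{thm:W=BV}), whereas the paper cites~\cite[Corollary~4.8]{ABCRST1} directly; both are fine here since weak Bakry--\'Emery is assumed in that range.
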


\begin{proof}
Assume that the weak Bakry-\'Emery condition~\eqref{eq:weak-BE} holds and begin with the case $p=1$.   Let $f \in \mathbf{B}^{1,\alpha}(X)$ with $\alpha >1/2$. Since $\mathbf{B}^{1,\alpha}(X) \subset \mathbf{B}^{1,1/2}(X)=BV(X)$, we deduce that $f$ is a BV function. Now since $f \in \mathbf{B}^{1,\alpha}(X)$, one has for every $ t >0$,
\[
\int_X \int_X p_t(x,y) |f(x)-f(y)| d\mu(x) d\mu(y) \le t^\alpha \| f \|_{1,\alpha}.
\]
By using the gaussian heat kernel lower bound we obtain
\[
\liminf_{\varepsilon\to 0^+}\frac{1}{\varepsilon}
 \iint_{\Delta_\varepsilon}\frac{|f(y)-f(x)|}{\mu(B(x,\varepsilon))} d\mu(x) d\mu(y) =0,
 \]
so $\| Df\|(X)=0$, and from Remark~\ref{constancy 1} one gets that $f$ is constant. It follows that $\alpha^\#_1(X)\le 1/2.$ On the other hand, from Corollary 4.8 in \cite{ABCRST1}, $\mathbf{B}^{1,1/2}(X)$ is dense in $L^1(X)$, so $\alpha_1^*(X)=\alpha_1^\#(X)=\frac{1}{2}$.
From Proposition 5.6 in \cite{ABCRST1}, one has:
\begin{enumerate}
\item  Both $p\mapsto \alpha_p^* (X)$ and $p\mapsto \alpha_p^\# (X)$ are non-increasing;
\item For $1\leq p \leq 2$ we have $\alpha_p^\# (X)\geq \alpha_p^*(X) \geq \frac{1}{2}$.
\end{enumerate}
Therefore, for $1 \le p \le 2$ we also have $\alpha_p^*(X)=\alpha_p^\#(X)=\frac{1}{2}$.

Now let $p>2$ and assume the quasi Bakry-\'Emery condition~\eqref{eq:strong-BE}. In that case, according to Proposition \ref{continuity Pt}, for every $f \in L^p(X)$ and $t>0$ one has $P_t f \in \mathbf{B}^{p,1/2}(X)$. Thus, $\mathbf{B}^{p,1/2}(X)$ is dense in $L^p(X)$ by strong continuity of the semigroup $P_t$ in $L^p(X)$. Hence $\alpha_p^*(X) \ge 1/2$. Using again the fact that both $p\mapsto \alpha_p^* (X)$ and $p\mapsto \alpha_p^\# (X)$ are non-increasing and moreover that $\alpha_2^*(X)=\alpha_2^\#(X)=\frac{1}{2}$, one concludes that for  every $p > 2$, $\alpha_p^*(X)=\alpha_p^\#(X)=\frac{1}{2}.$
\end{proof}

\section{Sobolev and isoperimetric inequalities}\label{section Sobolev local}

Combining the conclusions in this paper with the results in \cite[Section 6]{ABCRST1}, we immediately obtain the 
following  results that generalize the Sobolev embedding theorems from the classical Euclidean setting (see for example~\cite{MazyaSobolev})
and metric upper gradient setting (see for example~\cite{HKST} and \cite{HK00})
to the setting of Dirichlet forms and BV functions.

The following proposition is a weak-type version of the standard Sobolev embedding theorem. It gives
weak-$L^q$ control of the Besov function $f$, with $q$ the Sobolev conjugate of $p$, and can therefore be used to control the $L^s$-norm of $f$ in terms of the Besov norm of $f$ when $1\le s<pQ/(Q-p)$.

\begin{proposition}\label{sobofrac}
If  the volume growth condition 
$\mu(B(x,r)) \ge C_1 r^Q$,  $r \ge 0$,  is satisfied for some $Q>0$
then one has the following weak type Besov space embedding. Let $0<\delta < Q $ and $1 \le p < \frac{Q}{\delta} $.   Then
 there exists a constant $C_{p,\delta} >0$ such that for every $f \in \mathbf{B}^{p,\delta/2}(X) $,
\[
\sup_{s \ge 0} s \mu \left( \{ x \in X, | f(x) | \ge s \} \right)^{\frac{1}{q}} 
\le C_{p,\delta} \sup_{r>0} \frac{1}{r^{\delta+Q/p}}\biggl(\iint_{\{(x,y)\in X\times X\mid d(x,y)<r\}}|f(x)-f(y)|^{p}\,d\mu(x)\,d\mu(y)\biggr)^{\frac1p},
\]
where $q=\frac{p Q}{ Q -p \delta}$. Furthermore, for every $0<\delta <Q $, there exists a constant 
$C_{\emph{iso},\delta}$ such that for every measurable $E \subset X$, $\mu(E) <+\infty$,
\begin{align}\label{frac-iso}
\mu(E)^{\frac{Q-\delta}{Q}} 
\le C_{\emph{iso},\delta} \sup_{r>0} \frac{1}{r^{\delta+Q}} (\mu \otimes \mu) \left\{ (x,y) \in E \times E^c\, :\, d(x,y) \le r\right\} 
\end{align}
\end{proposition}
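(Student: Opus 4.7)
My plan is to deduce both inequalities from the weak-type Besov Sobolev embedding established for the heat-semigroup Besov class $\mathbf{B}^{p,\delta/2}(X)$ in \cite[Section 6]{ABCRST1}, combined with the metric reformulation of this class provided by Proposition \ref{prop:B=B} and the polynomial volume lower bound $\mu(B(x,r)) \ge C_1 r^Q$.

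First, I would invoke from \cite[Section 6]{ABCRST1} the weak-type Besov-to-$L^q$ embedding
\[
\sup_{s > 0} s\, \mu(\{x \in X : |f(x)| \ge s\})^{1/q} \le C\, \|f\|_{p,\delta/2}, \qquad q = \frac{pQ}{Q - p\delta},
\]
which is the natural Besov analogue of the classical Sobolev embedding and whose proof in that paper uses only doubling and the polynomial volume lower bound. Next, Proposition \ref{prop:B=B} replaces the heat-semigroup seminorm $\|f\|_{p,\delta/2}$ by the equivalent metric seminorm
\[
\|f\|_{B^\delta_{p,\infty}(X)} = \sup_{r>0} \left(\int_X \int_{B(x,r)} \frac{|f(y)-f(x)|^p}{r^{\delta p}\,\mu(B(x,r))}\, d\mu(y)\, d\mu(x)\right)^{1/p}.
\]
Finally, applying $\mu(B(x,r)) \ge C_1 r^Q$ to bound $1/\mu(B(x,r)) \le C r^{-Q}$ inside the integral yields
\[
\|f\|_{B^\delta_{p,\infty}(X)} \le C \sup_{r>0} \frac{1}{r^{\delta + Q/p}}\left(\iint_{\{d(x,y) < r\}} |f(x)-f(y)|^p\, d\mu(x)\, d\mu(y)\right)^{1/p}.
\]
Chaining these three bounds produces the first inequality.

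For the isoperimetric estimate \eqref{frac-iso}, I would specialize the first inequality to $p=1$ (so $q = Q/(Q-\delta)$) and to $f = \mathbf{1}_E$. On the left, taking $s = 1$ yields $\mu(E)^{(Q-\delta)/Q}$; on the right,
\[
\iint_{\{d(x,y) < r\}} |\mathbf{1}_E(x) - \mathbf{1}_E(y)|\, d\mu(x)\, d\mu(y) = 2\,(\mu \otimes \mu)\{(x,y) \in E \times E^c : d(x,y) < r\},
\]
so absorbing the factor of $2$ into the constant gives the claimed inequality.

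I do not anticipate any serious obstacle: the substantive work sits in the weak-type Besov embedding of \cite[Section 6]{ABCRST1} and in the identification Proposition \ref{prop:B=B}, both of which are already available. The remaining effort is bookkeeping to match exponents and rewrite the right-hand side in the stated form; the only mild point requiring attention is verifying that the embedding from \cite{ABCRST1} holds at the endpoint $p=1$ needed for the isoperimetric specialization, but this is indeed the regime in which the Besov class identifies with $BV(X)$ by Theorem \ref{thm:W=BV} and for which the co-area formula (Theorem \ref{lem:Co-area}) could serve as a backup route if the direct embedding argument needed any adjustment at $p=1$.
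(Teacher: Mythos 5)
Your proposal is correct and takes essentially the same route as the paper: cite the weak-type embedding from \cite[Section~6]{ABCRST1} (which applies because the volume lower bound plus the Gaussian upper bound give the ultracontractive estimate $p_t(x,y)\le Ct^{-Q/2}$), then pass to the metric Besov seminorm via Proposition~\ref{prop:B=B} and use $\mu(B(x,r))\ge C_1 r^Q$ to remove the ball-measure from the denominator. The only difference is that you spell out the $p=1$, $f=\mathbf{1}_E$ specialization for the isoperimetric bound, a step the paper leaves implicit; your worry about the $p=1$ endpoint is unnecessary, as the cited embedding is stated for $1\le p<Q/\delta$ and the paper applies it there without qualification.
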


\begin{proof}
From  the heat kernel upper bound \eqref{eq:heat-Gauss}, the  volume growth condition $\mu(B(x,r)) \ge C_1 r^Q$,  $r \ge 0$, 
implies the ultracontractive estimate
\begin{equation}\label{eq:ultracontractive}
p_t(x,y) \le \frac{C}{t^{Q/2}}.
\end{equation}
We are therefore in the framework of  Theorem 6.1 in \cite{ABCRST1}, from which one obtains that there is a constant $C_{p,\delta} >0$ such that for every $f \in \mathbf{B}^{p,\delta/2}(X) $,
\[
\sup_{s \ge 0}\, s\, \mu \left( \{ x \in X\, :\, | f(x) | \ge s \} \right)^{\frac{1}{q}} \le C_{p,\delta}  \| f \|_{p,\delta/2}
\]
where $q=\frac{p Q}{ Q -p \delta}$. The conclusion follows from Theorem~\ref{prop:B=B}.
\end{proof}

\begin{example}
Assume that $X=\mathbb{R}^d$ is equipped with the standard Dirichlet form and the Lebesgue measure $\lambda^d$. 
If $E$ is a Borel set whose boundary $\partial E\subset \mathbb{R}^d$ is closed and $m$-rectifiable,  by~\cite[Theorem 3.2.39]{Fed69} we have
\begin{equation}
\limsup_{r\to 0+}\frac{1}{r^{d-m}} \lambda^d((\partial E)_r)= \frac{2\lambda^{m}(\partial E)\Gamma\big(\frac{1}{2}\big)^m}{m\Gamma\big(\frac{m}{2}\big)},
\end{equation}
where $(\partial E)_r$ denotes the $r$-neighborhood of $\partial E$. This implies $\mathbf{1}_E \in \mathbf{B}^{1,\frac{d-m}{2}}(\mathbb{R}^d)$ and proposition \ref{sobofrac}  \eqref{frac-iso},  is satisfied with $Q=d$, $\delta=d-m$. For instance, if $E$ is the so-called Koch snowflake domain in $\mathbb R^2$ then $d=2$ and $m=\frac{\log4}{\log3}$.
\end{example}

In Euclidean space there is a standard method for using the above weak-type Sobolev embedding to obtain the usual Sobolev embedding theorem, in which  the weak-$L^q$ control of $f$ is replaced by the strong-$L^q$ control.  However this approach uses locality properties which need not be valid for the Besov seminorm $\|\cdot\|_{p,\alpha}$. We direct
the interested reader to~\cite{HK00} for more details on this topic.


The one circumstance we have investigated in which the Besov seminorm has a locality property arose in Theorem~\ref{thm:W=BV}, see also Remark~\ref{chaining metric}, for the space $\mathbf{B}^{1,1/2}$ under the assumption of a weak Bakry-\'Emery estimate, in which case we had $\mathbf{B}^{1,1/2}=BV(X)$.  This locality property lets us obtain a standard Sobolev embedding in which the $L^q$ norm is controlled by the BV norm.  We may view this as an extension of  known results on Riemannian manifolds with non-negative Ricci curvature (see Theorem~8.4 in~\cite{Ledoux96}) or on Carnot groups (see~\cite{Varopoulos89}) to our metric measure Dirichlet setting under the further hypothesis that there is a weak Bakry-\'Emery estimate.

\begin{theorem}
Suppose that the weak Bakry-\'Emery estimate~\eqref{eq:weak-BE} is satisfied. If the volume growth condition 
$\mu(B(x,r)) \ge C_1 r^Q$,  $r \ge 0$,  is satisfied for some $Q>0$, then there exists a constant $C_2 >0$ such 
that for every $f \in BV(X)$,
\[
\| f \|_{L^q(X)} \le C_2 \| Df \|(X)
\]
where $q=\frac{Q}{ Q-1}$. In particular, if $E$ is a set with finite perimeter in $X$, then
\[
\mu(E)^{\frac{Q-1}{Q}} \le C_2 P(E,X).
\]
\end{theorem}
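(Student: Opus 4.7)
The plan is a three-step argument: (i) use the Besov characterisation of $BV(X)$ to deduce a weak-type Sobolev inequality from Proposition~\ref{sobofrac}; (ii) specialise this to indicators to obtain the stated isoperimetric inequality; (iii) promote it to the full $L^q$ Sobolev bound by combining the isoperimetric inequality with the co-area formula.

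For step~(i), the weak Bakry-\'Emery estimate together with Theorem~\ref{thm:W=BV} identifies $BV(X)=\mathbf{B}^{1,1/2}(X)$ with $\|f\|_{1,1/2}\simeq \|Df\|(X)$. The lower volume bound $\mu(B(x,r))\ge C_1r^Q$ combined with the Gaussian upper bound in~\eqref{eq:heat-Gauss} yields the ultracontractive bound~\eqref{eq:ultracontractive}, so Proposition~\ref{sobofrac} applies. Taking $p=1$ and $\delta=1$ there, and using Proposition~\ref{prop:B=B} to identify the metric and heat-semigroup Besov seminorms, we obtain for every $f\in BV(X)$ the weak-type inequality
\[
\sup_{s>0}\, s\,\mu\bigl(\{|f|>s\}\bigr)^{1/q}\;\le\; C\,\|f\|_{1,1/2}\;\le\; C'\,\|Df\|(X),
\]
where $q=Q/(Q-1)$.

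Step~(ii) is immediate: applied to $f=\mathbf{1}_E$ at $s=1$, the weak-type bound gives $\mu(E)^{(Q-1)/Q}\le C\,\|\mathbf{1}_E\|_{1,1/2}\le C'\,P(E,X)$, which is the second assertion of the theorem.

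For step~(iii) I would first reduce to $f\ge 0$ by replacing $f$ with $|f|$: since $|\cdot|$ is $1$-Lipschitz, any admissible sequence $u_k$ for $f$ as in Definition~\ref{def-BV} produces an admissible sequence $|u_k|$ for $|f|$ with $|\nabla|u_k||\le|\nabla u_k|$, so $|f|\in BV(X)$ with $\|D|f|\|(X)\le\|Df\|(X)$ by Definition~\ref{def-Du}. Using the layer-cake representation $|f|(x)=\int_0^\infty \mathbf{1}_{\{|f|>t\}}(x)\,dt$ and Minkowski's integral inequality in $L^q(X)$ gives
\[
\|f\|_{L^q(X)}\;\le\;\int_0^\infty \mu\bigl(\{|f|>t\}\bigr)^{1/q}\,dt.
\]
The isoperimetric inequality from step~(ii) bounds the integrand by $C\,P(\{|f|>t\},X)$, and the co-area formula Theorem~\ref{lem:Co-area} collapses the $t$-integral into $C\,\|D|f|\|(X)\le C\,\|Df\|(X)$. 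The only delicate point is that $\{|f|>t\}$ has finite perimeter for a.e.\ $t$, but this is exactly the content of Theorem~\ref{lem:Co-area}; all the remaining ingredients are already in place.
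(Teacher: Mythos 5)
Your proof is correct, but it reaches the conclusion by a genuinely different route than the paper. The paper verifies the pseudo-Poincar\'e type condition $(P_{1,1/2})$ of \cite[Definition~6.7]{ABCRST1} via Theorem~\ref{thm:W=BV} and then invokes the general strong-type embedding \cite[Theorem~6.9]{ABCRST1} with $p=1$, $\alpha=1/2$, $\beta=Q/2$ to get $\|f\|_{L^q}\le C\|f\|_{1,1/2}\le C\|Df\|(X)$ in one stroke. You instead use only material developed inside this paper: the weak-type bound of Proposition~\ref{sobofrac} with $p=1$, $\delta=1$ (via Theorem~\ref{thm:W=BV} and Proposition~\ref{prop:B=B}), specialize to indicators to get the isoperimetric inequality, and then upgrade to the strong $L^q$ bound by the Maz'ya mechanism: replace $f$ by $|f|$ (with $\|D|f|\|\le\|Df\|$ by the Lipschitz contraction property of energy measures), apply the layer-cake formula and Minkowski's integral inequality to reduce to $\int_0^\infty \mu(\{|f|>t\})^{1/q}\,dt$, bound each level set by its perimeter, and collapse the $t$-integral with the co-area formula (Theorem~\ref{lem:Co-area}). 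This buys self-containment and transparency -- every step is a result proved in the present paper rather than delegated to the companion paper's abstract machinery -- at the cost of being slightly longer. One small thing worth making explicit is the chain-rule bound $|\nabla|u_k||\le|\nabla u_k|$: since $|\cdot|$ is only Lipschitz, not $C^1$, you should approximate by $\eta_\eps(t)=\sqrt{t^2+\eps^2}$ and pass to the limit, or cite the normal-contraction property of strongly local Dirichlet forms; this is routine but deserves a sentence. Also note implicitly you need $Q>1$ for $q=Q/(Q-1)$ to be finite, which is the constraint $p<Q/\delta$ in Proposition~\ref{sobofrac}.
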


\begin{proof}
Observe that as in the above proof, the heat kernel satisfies the ultracontractive estimate~\eqref{eq:ultracontractive}.
From  Theorem~\ref{thm:W=BV} we have
\[
\|f\|_{1,1/2}  \le C\liminf_{s \to 0}  s^{-1/2}  \int_X P_s (|f-f(y)|)(y) d\mu(y).
\]
This verifies a condition denoted by $(P_{1,1/2})$ in Definition~6.7 of~\cite{ABCRST1}), putting us in the framework of~\cite[Theorem 6.9]{ABCRST1} with $p=1$, $\alpha=1/2$ and $\beta=Q/2$. Notice also that $\|f\|_{1,1/2}\le C\|Df\|(X)$ from Theorem~\ref{thm:W=BV}, so we have
\[
\| f \|_{L^q(X)} \le C\|f\|_{1,1/2} \le C_2 \| Df \|(X),
\] 
where $q=\frac{Q}{ Q-1}$. Taking $f=\mathbf 1_E$ then yields
\begin{equation*}
\mu(E)^{\frac{Q-1}{Q}} \le C_2 P(E,X).\qedhere
\end{equation*}
\end{proof}

\bibliographystyle{plain}
 \bibliography{BV_Refs}

\end{document}